\newcommand{\BG}{{\mathbb {G}}}
\newcommand{\BH}{{\mathbb {H}}}
\newcommand{\CF}{{\mathcal {F}}}
\newcommand{\CL}{{\mathcal {L}}}
\newcommand{\CN}{{\mathcal {N}}}
\newcommand{\CO}{{\mathcal {O}}}
\newcommand{\CS}{{\mathcal {S}}}
\newcommand{\Ad}{{\mathrm{Ad}}}
\newcommand{\End}{{\mathrm{End}}}
\newcommand{\GL}{{\mathrm{GL}}}
\newcommand{\Hom}{{\mathrm{Hom}}}
\newcommand{\rank}{{\mathrm{rank}}}
\newcommand{\rk}{{\mathrm{k}}}
\newcommand{\SL}{{\mathrm{SL}}}
\newcommand{\tr}{{\mathrm{tr}}}
\newcommand{\con}{\textit{C}}
\newcommand{\od}{\operatorname{d}}
\newcommand{\oH}{\operatorname{H}}
\newcommand{\oS}{\operatorname{S}}
\newcommand{\oM}{\operatorname{M}}
\newcommand{\oG}{\operatorname{G}}
\newcommand{\oZ}{\operatorname{Z}}
\newcommand{\oV}{\operatorname{V}}
\newcommand{\oD}{\textit{D}}
\newcommand{\oE}{\operatorname{E}}
\newcommand{\g}{\mathfrak g}
\newcommand{\h}{\mathfrak h}
\renewcommand{\v}{\mathfrak v}
\renewcommand{\l}{\mathfrak l}
\newcommand{\s}{\mathfrak s}
\renewcommand{\sl}{\mathfrak s \mathfrak l}
\renewcommand{\rk}{\mathrm k}
\newcommand{\Z}{\mathbb{Z}}
\newcommand{\C}{\mathbb{C}}
\newcommand{\R}{\mathbb R}
\newcommand{\abs}[1]{\lvert#1\rvert}
\newcommand{\la}{\langle}
\newcommand{\ra}{\rangle}
\newcommand{\be}{\begin {equation}}
\newcommand{\ee}{\end {equation}}
\newcommand{\bee}{\begin {equation*}}
\newcommand{\eee}{\end {equation*}}
\newcommand{\cf}{\emph{cf.}~}
\theoremstyle{Theorem}
\theoremstyle{Theorem}
\newtheorem{introconjecture}{Conjecture}
\newtheorem{introtheorem}[introconjecture]{Theorem}
\theoremstyle{Theorem}
\newtheorem{lem}{Lemma}[section]
\theoremstyle{Theorem}
\theoremstyle{Plain}
\theoremstyle{Definition}
\newtheorem{dfn}{Definition}[section]
\newtheorem{lemd}[dfn]{Lemma}
\newtheorem{prpd}[dfn]{Proposition}
\newtheorem{thmd}[dfn]{Theorem}
\begin{document}

\title[Linear periods and Shalika periods]{Uniqueness of twisted linear periods and twisted Shalika periods}

\author{Fulin Chen}

\address{Department of Mathematics, Xiamen University,
 Xiamen, 361005, China} \email{chenf@xmu.edu.cn}

\author{Binyong Sun}

\address{Academy of Mathematics and Systems Science, Chinese Academy of
Sciences \& University of Chinese Academy of Sciences,  Beijing, 100190, China} \email{sun@math.ac.cn}

\subjclass[2000]{22E50} \keywords{Linear period, Shalika model, irreducible
representation, uniqueness, generalized function}


\begin{abstract}
Let $\rk$ be a local field of characteristic zero. Let $\pi$ be an
irreducible admissible smooth representation of $\GL_{2n}(\rk)$. We prove that for all but countably many characters $\chi$ of $\GL_n(\rk)\times
\GL_n(\rk)$, the space of $\chi$-equivariant (continuous in the archimedean case) linear
functionals on $\pi$ is at most one dimensional.
Using this, we prove the uniqueness of twisted Shalika models.
\end{abstract}

 \maketitle


\section{Introduction}

Let $\rk$ be a local field of characteristic zero.  The Shalika subgroup of the general linear group $\GL_{2n}(\rk)$ ($n\geq 0$) is defined to be
\begin{equation}
  \oS_n(\rk):=\left\{           \left[
            \begin{array}{cc}
              a&b\\ 0&a
              \end{array}
           \right]\mid a\in \GL_n(\rk), \, b\in \oM_n(\rk)
           \right\},
\end{equation}
where ``$\oM_n$" indicates the algebra of $n\times n$-matrices. Fix a character $\psi_{\oS_n}$ on $\oS_n(\rk)$ such that
\begin{equation}\label{psik}
 \psi_{\oS_n}\left( \left[ \begin{array}{cc}
              1&b\\ 0&1
              \end{array}
              \right]\right)=\psi_\rk(\tr(b)), \quad \textrm{for all $b\in \oM_n(\rk)$,}
\end{equation}
where $\psi_\rk: \rk\rightarrow \C^\times$ is a non-trivial unitary character. We will prove the following uniqueness result in this paper.

\begin{introtheorem}\label{unique0}
For every irreducible admissible smooth representation $\pi$ of
$\GL_{2n}(\rk)$, the space
\be\label{sshalika}
  \Hom_{\oS_n(\rk)}(\pi, \psi_{\oS_n})
  \ee
   is at most one dimensional.
\end{introtheorem}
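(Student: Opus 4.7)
The plan is to deduce Theorem~\ref{unique0} from the paper's main theorem---namely, that $\dim\Hom_{\GL_n(\rk)\times\GL_n(\rk)}(\pi,\chi)\le 1$ for all but countably many characters $\chi$ of $\GL_n(\rk)\times\GL_n(\rk)$---via a Mellin-type transform from Shalika functionals to linear period functionals. The key structural observation is that $\oS_n(\rk) = \GL_n^{\Delta}(\rk)\ltimes U(\rk)$, where $\GL_n^{\Delta} = \{\diag(a,a)\}$ embeds diagonally in the Levi $\GL_n\times\GL_n$ of the standard parabolic with unipotent radical $U$, and that $\psi_{\oS_n}$ is trivial on $\GL_n^{\Delta}$. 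The quotient $(\GL_n\times\GL_n)/\GL_n^{\Delta}$ is identified with $\GL_n$ via $(a_1,a_2)\mapsto a_1 a_2^{-1}$, and characters of $\GL_n\times\GL_n$ trivial on $\GL_n^{\Delta}$ have the form $\chi_\mu(a_1,a_2) = \mu(\det(a_1 a_2^{-1}))$ for some quasi-character $\mu$ of $\rk^\times$.

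For each such $\mu$ and each Shalika functional $\lambda$, the proposed transform is
\[
\ell_\mu(\lambda)(v) := \int_{\GL_n(\rk)}\lambda\!\left(\pi\!\left(\left[\begin{array}{cc}a & 0 \\ 0 & 1\end{array}\right]\right)v\right)\mu(\det a)^{-1}\,da,\qquad v\in\pi.
\]
A direct computation using $\oS_n(\rk)$-equivariance of $\lambda$ and the bi-invariance of Haar measure shows that, whenever defined, $\ell_\mu(\lambda)$ is $(\GL_n(\rk)\times\GL_n(\rk),\chi_\mu)$-equivariant. The analytic input required is: (i) absolute convergence of the integral for $\mu$ in a nonempty open cone, obtained from the growth of the generalized matrix coefficient $a\mapsto\lambda(\pi(\diag(a,1))v)$---via Jacquet module theory in the non-archimedean setting and Casselman--Wallach-type asymptotics in the archimedean setting; (ii) meromorphic continuation in $\mu$; and (iii) a Mellin injectivity statement asserting that for nonzero $\lambda$ the family $\mu\mapsto\ell_\mu(\lambda)$ is not identically zero (which, upon specialization at $a=1$, recovers $\lambda$).

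Granted these ingredients, one argues by contradiction: if $\lambda_1,\lambda_2\in\Hom_{\oS_n(\rk)}(\pi,\psi_{\oS_n})$ were linearly independent, then by the main theorem, for $\mu$ outside a countable exceptional set, $\ell_\mu(\lambda_1)$ and $\ell_\mu(\lambda_2)$ are proportional, yielding $\ell_\mu(\lambda_1) = c(\mu)\ell_\mu(\lambda_2)$ with $c(\mu)$ meromorphic. An inverse Mellin transform argument should then force $c(\mu)$ to be constant, whence Mellin injectivity gives $\lambda_1 - c\lambda_2 = 0$, a contradiction. \textbf{The main obstacle} is precisely this last step---showing that the proportionality factor $c(\mu)$ is actually independent of $\mu$. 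This is a subtle uniqueness statement for Mellin transforms along the determinant fibers of $\GL_n(\rk)$, and likely requires exploiting the full $(\GL_n\times\GL_n)$-equivariance of the $\ell_\mu(\lambda_i)$'s (or combining the $\diag(a,1)$-integral with auxiliary transforms twisted by elements of $U(\rk)$ or by elements outside the parabolic $P$) to extract enough information to pin down $c(\mu)$. A secondary technical obstacle is carrying out the convergence and meromorphic-continuation analysis uniformly across the archimedean and non-archimedean settings.
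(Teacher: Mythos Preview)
Your integral transform $\ell_\mu(\lambda)$ is exactly the paper's Shalika zeta integral $Z_\lambda(\cdot, s)$ (with $\mu = |\cdot|^{s-1/2}$), and the overall architecture---map Shalika functionals to linear-period functionals and invoke Theorem~B---matches Section~5. One minor slip: in the generality of Theorem~A, $\psi_{\oS_n}$ need \emph{not} be trivial on $\GL_n^\Delta = \oD_n(\rk)$; that is precisely the new case the paper treats. Consequently the target character is $\chi_s(\diag(a,b)) = \psi_{\oS_n}(\diag(b,b))\,|\det(ba^{-1})|^{s-1/2}$ rather than your $\chi_\mu$.

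The substantive point is that the paper completely sidesteps your ``main obstacle'' (constancy of $c(\mu)$, meromorphic continuation, Mellin inversion). Instead it proves \emph{pointwise} injectivity: for each fixed $s$ and each nonzero $\lambda$, there exists $v\in\pi$ with $Z_\lambda(v,s)$ absolutely convergent and nonzero. The trick (Lemma~5.1) is that the set of functions $\{g \mapsto \lambda(\pi(\diag(g,1))v) : v \in \pi\}$ on $\GL_n(\rk)$ is closed under multiplication by Schwartz functions, because averaging $v$ over the unipotent radical against a Schwartz density multiplies $\phi_{\lambda,v}$ by the Fourier transform of that density. One may therefore replace $\phi_{\lambda,v}$ by a compactly supported truncation without leaving the space, so the integral converges for \emph{all} $s$ and can be made nonzero at any prescribed $s$. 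With pointwise injectivity in hand, any finite-dimensional $\CL \subset \Hom_{\oS_n(\rk)}(\pi,\psi_{\oS_n})$ injects into $\Hom_{\GL_n(\rk)\times\GL_n(\rk)}(\pi,\chi_s)$ for every $s$ in a half-plane---uncountably many values---while Theorem~B excludes only countably many characters. Pick one $s$ with $\chi_s$ good; the target is then at most one-dimensional, hence so is $\CL$. No meromorphic continuation is needed, and indeed the paper explicitly remarks that avoiding it is the payoff of its formulation of Theorem~B.
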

Here and henceforth, when $\rk$ is archimedean, by an admissible smooth representation of $\GL_m(\rk)$ ($m\geq 0$)
we mean a Casselman-Wallach representation of it.  Recall that a
representation of a real reductive group is called a
Casselman-Wallach representation if it is Fr\'{e}chet, smooth, of
moderate growth, and its Harish-Chandra module has finite length.
The reader may consult \cite{C}, \cite[Chapter 11]{W} or \cite{BK}
for details about Casselman-Wallach representations. In the non-archimedean case, the notion of ``admissible smooth representation" retains the usual meaning.

Set
\begin{equation}\label{dn}
  \oD_n(\rk):=\left \{\left[ \begin{array}{cc}
              a&0\\ 0&a
              \end{array}
              \right]\mid a\in \GL_n(\rk)\right\}\subset \oS_n(\rk).
              \end{equation}
When $\psi_{\oS_n}$ has trivial restriction to $\oD_n(\rk)$, Theorem \ref{unique0} is proved in \cite{JR}
for the non-archimedean case and \cite{AGJ} for the archimedean case. This implies the same result when the restriction of $\psi_{\oS_n}$  to $\oD_n(\rk)$ is the square of a character.

A non-zero element of the space \eqref{sshalika} is called a local Shalika period of $\pi$.
Using the Langlands lift to $\GL_{2n}$, local  Shalika periods and their global analogues are fundamental to the study of standard L-functions of $\mathrm{GSpin}_{2n+1}$. See \cite[Section 3]{GR} or \cite{AsG} for example.
Similar to the untwisted case \cite{JR,AGJ}, the proof of Theorem \ref{unique0} is based on Shalika zeta integrals \cite{FJ} and the following uniqueness result.

\begin{introtheorem}\label{uniquel}
Let $\pi$ be an irreducible admissible smooth representation of
$\GL_{2n}(\rk)$. Then for all but countably many  (finitely many in the non-archimedean case)
 characters $\chi$ of $\GL_n(\rk)\times \GL_n(\rk)$, the space
\be\label{unihom}
  \Hom_{\GL_n(\rk)\times \GL_n(\rk)}(\pi, \chi)
  \ee
   is at most one dimensional.
\end{introtheorem}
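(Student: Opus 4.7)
Write $G=\GL_{2n}(\rk)$ and $H=\GL_n(\rk)\times\GL_n(\rk)$, with $H$ embedded as block-diagonal matrices. The goal is to bound $\dim\Hom_H(\pi,\chi)$ by $1$ outside a countable (resp.\ finite in the non-archimedean case) family of characters $\chi$ of $H$. The plan is to implement the Gelfand--Kazhdan method of bi-equivariant distributions uniformly in $\chi$, stratify $G$ by $H$-double cosets, and track the ``bad'' characters orbit-by-orbit.

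\textbf{Step 1 (Gelfand--Kazhdan setup).} Fix the anti-involution $\sigma:G\to G$, $\sigma(g)=g^{-1}$. It stabilizes $H$, sends every character of $H$ to its inverse, and fixes every $H$-double coset set-wise (since inversion preserves the relative-position invariant classifying $H\backslash G/H$). By the Gelfand--Kazhdan criterion in the $p$-adic case, and the Casselman--Wallach version due to Sun--Zhu in the archimedean case, it suffices to show that every distribution $T$ on $G$ satisfying the twisted bi-equivariance
\begin{equation*}
T(h_1 g h_2)=\chi(h_1)^{-1}\chi(h_2)\,T(g),\qquad h_1,h_2\in H,\ g\in G,
\end{equation*}
is invariant under $\sigma$.

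\textbf{Step 2 (orbit stratification).} The double-coset space $H\backslash G/H$ has $n+1$ elements $\mathcal{O}_0,\mathcal{O}_1,\ldots,\mathcal{O}_n$, indexed by the dimension of the intersection $V_1\cap V_2$ of two $n$-subspaces of $\rk^{2n}$; here $\mathcal{O}_0$ (transverse pairs) is open and dense. Each $\mathcal{O}_k$ is $\sigma$-stable.

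\textbf{Step 3 (open orbit).} On $\mathcal{O}_0$ the space of twisted bi-equivariant distributions is at most one dimensional; when non-zero, the generator is automatically $\sigma$-invariant, by the same computation that underlies the original Jacquet--Rallis argument and because $\sigma$ reverses the pertinent character on $H$.

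\textbf{Step 4 (closed orbits and bad characters).} For each $k\ge 1$, the space of bi-equivariant distributions supported on $\overline{\mathcal{O}_k}\setminus\bigcup_{j<k}\mathcal{O}_j$ is filtered by transversal jet order along the normal bundle to $\mathcal{O}_k$. Bruhat's theorem in the $p$-adic setting (where only finitely many jet orders contribute, by Bernstein's localization principle) and the Aizenbud--Gourevitch jet filtration in the archimedean setting (where each layer is finite dimensional) produce, for each $k$ and each jet order, an explicit finite system of algebraic equations on $\chi$ that must hold for a non-$\sigma$-invariant distribution to exist on that stratum. Collecting the resulting exceptional sets over all $k\ge 1$ and all jet orders gives a subset $B$ of the character group of $H$ that is countable, and is finite when $\rk$ is non-archimedean.

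\textbf{Step 5 (conclusion).} For any $\chi\notin B$, every twisted bi-equivariant distribution on $G$ must be supported on $\mathcal{O}_0$, hence is $\sigma$-invariant by Step 3. Step 1 then yields $\dim\Hom_H(\pi,\chi)\le 1$, establishing the theorem.

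\textbf{Main obstacle.} Step 4 is the essential technical point: one must identify, stratum-by-stratum, the precise algebraic conditions on $\chi$ under which non-$\sigma$-invariant distributions can be supported there, and prove that these conditions carve out a countable subset overall. The $p$-adic case rests on Bernstein localization, which a priori bounds the contributing jet orders; the archimedean case requires uniform analytic control of the infinite transversal jet filtration, which is where the Casselman--Wallach structure of $\pi$ and the finite length of its Harish-Chandra module enter decisively.
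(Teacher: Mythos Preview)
Your proposal contains two genuine gaps. First, the anti-involution $\sigma(g)=g^{-1}$ is the wrong choice. The generalized matrix coefficients produced by the Gelfand--Kazhdan machinery, coming from $\Hom_H(\pi,\chi)\otimes\Hom_H(\pi^\vee,\chi^{-1})$, satisfy $f(hx)=f(xh)=\chi(h)f(x)$ for $h\in H$ --- the \emph{same} character on both sides, not $\chi^{-1}$ on the left and $\chi$ on the right as you write. With that equivariance, $f(x)=f(x^{-1})$ forces $\chi(h)f(x)=f(xh)=f(h^{-1}x^{-1})=\chi(h)^{-1}f(x)$, hence $\chi^2=1$. The paper makes exactly this observation and explains that this is why the Jacquet--Rallis and Aizenbud--Gourevitch arguments (which do prove $f(x)=f(x^{-1})$ for trivial $\chi$) cannot extend directly; the remedy is to use the transpose $g\mapsto g^t$, which preserves the equivariance because $\chi(h^t)=\chi(h)$ for any character of $\GL_n\times\GL_n$, and to prove $f(x)=f(x^t)$ instead (Theorem~C of the paper). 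Your Step~3 appeal to ``the same computation that underlies the original Jacquet--Rallis argument'' therefore does not apply.

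Second, Step~2 is false: $H\backslash G/H$ does not consist of $n+1$ elements. Already for $n=1$ the torus double cosets in $\GL_2(\rk)$ form a one-parameter family (on the cell where all four entries are nonzero, $ad/bc$ is a continuous invariant). Your description by $\dim(V_1\cap V_2)$ parametrizes $P\backslash G/P$ for the maximal parabolic $P$ stabilizing an $n$-plane, not $H\backslash G/H$; and for a direct-sum decomposition one has $V_1\cap V_2=0$ anyway. Since Steps~3--5 rest on this finite stratification, the orbit-by-orbit scheme collapses. The paper's route is quite different: after replacing inversion by transposition it linearizes to the tangent space (Theorem~D), reducing to $(\GL_n\times\GL_n)$-equivariant generalized functions on $\mathrm{M}_n(\rk)\times\mathrm{M}_n(\rk)$. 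The continuous parameters in the double-coset picture are absorbed by Harish-Chandra descent to centralizers of semisimple elements, which reduces everything to the nilpotent cone; there an $\mathfrak{sl}_2$-triple and Fourier/homogeneity argument produces the explicit ``good'' condition on $\chi$ (namely that $(\gamma_0\gamma_1^{-1})^{2r}\cdot|\cdot|^{-m}$ is never pseudo-algebraic for $r\in\{\pm1,\dots,\pm n\}$, $m\in\{1,\dots,2n^2\}$), which visibly excludes only finitely many characters in the non-archimedean case and countably many in the archimedean case.
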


A non-zero element of the space \eqref{unihom} is called a local linear period of $\pi$.
When $\chi$ is the trivial character,  the uniqueness of local linear periods is
proved by Jacquet and Rallis \cite[Theorem 1.1]{JR} for the non-archimedean case,
and by Aizenbud and Gourevitch \cite[Theorem 8.2.4]{AG} for the
archimedean case.

The reader is referred to \cite{FJ,JR} for the role of local linear periods and their global analogues in the  study of L-functions.
In a recent work of the second named author, Theorem \ref{uniquel} is used in the proof  of a non-vanishing
assumption which is critical to the arithmetic study of special values of  L-functions for
$\mathrm{GSpin}_{2n+1}$. See \cite[Section 4]{Sun} for details. This is the original motivation of this paper.

Let us now introduce a technical notion on  characters of $\GL_n(\rk)\times \GL_n(\rk)$.
We use $|\cdot|$ to denote the normalized absolute value on $\rk$, and we also use it to stand for the character $t\mapsto |t|$ of $\rk^\times$.
We say that a character of $\rk^\times$ is pseudo-algebraic if it has the form
\[
  t\mapsto \left\{ \begin{array}{ll}
                          1, \quad &\textrm{if $\rk$ is non-archimedean};\\
                          t^m, \quad & \textrm{if $\rk=\R$};\\
                          \iota(t)^{m} \cdot \iota'(t)^{m'},\quad  & \textrm{if $\rk\cong \C$},
                           \end{array}
                           \right.
\]
where $m, m'$ are non-negative integers, and $\iota, \iota'$ are the two distinct topological isomorphisms from $\rk$ to $\C$.

A character $\gamma$ of $\GL_n(\rk)$ is said to be good if it equals $\eta\circ \det$ for some  character $\eta$ of $\rk^\times$ such that
\[
  \eta^{2r} \cdot | \cdot |^{-m} \ \textrm{ is not  pseudo-algebraic}
\]
  for all $r\in \{\pm 1, \pm 2, \cdots, \pm n\}$ and all $m\in\{1,2, \cdots, 2n^2\}$. Note that $\gamma$ is good if and only if so is $\gamma^{-1}$, and all but countably many (finitely many in the non-archimedean case)
characters of $\GL_n(\rk)$ are good. A character  $\chi=\gamma_0\otimes \gamma_1$ of $\GL_n(\rk)\times \GL_n(\rk)$ is said to be good if
the character $\gamma_0\gamma_1^{-1}$ of $\GL_n(\rk)$ is good.

\begin{introtheorem}\label{uniquef}
Let $f$ be a generalized function on $\GL_{2n}(\rk)$ and let $\chi$ be a good  character of $\GL_n(\rk)\times \GL_n(\rk)$.
If for every
$h\in \GL_n(\rk)\times \GL_n(\rk)$,
\begin{equation}\label{inf0}
   f(hx)=f(xh)=\chi(h)f(x),\quad x\in \GL_{2n}(\rk),
\end{equation}
as generalized functions on $\GL_{2n}(\rk)$, then
\[
   f(x)=f(x^t).
\]
\end{introtheorem}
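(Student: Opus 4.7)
The plan is a Gelfand--Kazhdan style argument: let $\sigma$ denote the anti-involution $x \mapsto x^t$ of $\GL_{2n}(\rk)$, and prove the stronger statement that any $\chi$-bi-equivariant generalized function $\phi$ on $\GL_{2n}(\rk)$ satisfying $\phi \circ \sigma = -\phi$ must vanish. Theorem \ref{uniquef} then follows by applying this to $\phi := f - f\circ \sigma$; the bi-equivariance of $\phi$ is automatic once one observes two facts: $\sigma$ preserves the block-diagonal subgroup $H := \GL_n(\rk)\times\GL_n(\rk)$, and for every $h \in H$ one has $\chi(h^t) = \chi(h)$ because each component $\gamma_i = \eta_i\circ\det$ is obviously invariant under transpose.

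To prove the vanishing claim, I would invoke a localization principle for the action of $(H \times H) \rtimes \langle\sigma\rangle$ on $\GL_{2n}(\rk)$, where $H\times H$ acts by left-right multiplication. In the non-archimedean case this is Bernstein's standard localization; in the archimedean case one uses the Aizenbud--Gourevitch analog (as developed in \cite{AG} and its successors) together with the nilpotent descent / Harish-Chandra slice method. This reduces the problem to the following local question on each $(H \times H)$-orbit $\mathcal{O}$: any $\chi$-equivariant generalized function on the formal neighbourhood of $\mathcal{O}$, anti-invariant under $\sigma$ when $\sigma(\mathcal{O}) = \mathcal{O}$, must vanish.

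The central input is the orbit analysis. Writing $x = \left[\begin{smallmatrix} A & B \\ C & D \end{smallmatrix}\right]$, the $(H\times H)$-orbits are parameterized by a finite collection of rank data together with a continuous invariant that, on the open stratum, is a conjugacy class in a $\GL_r(\rk)$ with $r \le n$. By Frobenius reciprocity the space of $\chi$-equivariant generalized sections over an orbit $\mathcal{O}$ is controlled by the restriction of $\chi$ to the stabilizer $\Stab_{H\times H}(x_0)$; for a representative $x_0$ of a non-open orbit, this stabilizer contains a twisted diagonal copy of some $\GL_r(\rk)$ on which $\chi$ restricts to a character of the form $(\eta_0\eta_1^{-1})^{2r}\cdot|\cdot|^{-m}\circ\det$ up to a pseudo-algebraic twist, with $r$ and $m$ lying in exactly the ranges $\{\pm 1,\dots,\pm n\}$ and $\{1,\dots,2n^2\}$ appearing in the definition of goodness. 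The hypothesis that $\chi$ is good therefore forces the stabilizer character to disagree with $\chi$, killing every contribution from non-open orbits; on the (essentially) open orbit the stabilizer distribution space is one-dimensional and $\sigma$ acts on it by a scalar which is easily seen to be $+1$, so $\sigma$-anti-invariance forces $\phi|_{\mathcal{O}} = 0$ there as well.

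The main obstacle, and the bulk of the work, is the orbit-by-orbit check: producing explicit representatives of all $(H\times H)$-orbits, computing the modular characters of their stabilizers, and showing that the resulting list of ``obstructing'' characters of $\rk^\times$ is exactly what the goodness condition excludes (this is what pins down the specific ranges $|r| \le n$ and $m \le 2n^2$). A secondary difficulty, which is entirely archimedean in nature, is that Bernstein localization must be replaced by the machinery of transverse jets: one must rule out non-trivial normal derivatives of $\chi$-equivariant distributions along each orbit, which requires iterating the stabilizer-character analysis over the symmetric powers of the conormal bundle and again relies on the goodness hypothesis to exclude resonances.
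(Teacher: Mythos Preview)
Your high-level reformulation matches the paper's Theorem~\ref{vanisht}: the statement $f=f\circ\sigma$ is equivalent to the vanishing of all generalized functions on $\oG(E)\cong\GL_{2n}(\rk)$ that are $\chi\otimes\chi^{-1}$-equivariant under $H\times H$ and take the sign character on the extra $\sigma$-generator. So the setup is right.

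The gap is in how you dispose of the orbits. Your claim that on every non-open $(H\times H)$-orbit the stabilizer character ``disagrees with $\chi$'' whenever $\chi$ is good is false, and already fails at the most basic closed orbit, namely $H\cdot 1=H\subset\GL_{2n}(\rk)$. There the stabilizer is the diagonal copy of $H$, on which $\chi\otimes\chi^{-1}$ is identically trivial; the conormal representation is $\v(E)=\Hom(V_0,V_1)\oplus\Hom(V_1,V_0)$ with the conjugation action, whose modular character is also trivial. So for \emph{every} $\chi$ (good or not) there are nonzero $\chi\otimes\chi^{-1}$-equivariant distributions supported along this orbit, and nothing in your stabilizer analysis can rule them out. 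The trivial character is good, and the constant function $1$ survives your filtration, so the argument as written cannot close. The same phenomenon recurs at every closed orbit after descent: the character $\chi\otimes\chi^{-1}$ is always trivial on the relevant diagonal piece, so goodness never enters through stabilizer restrictions.

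What the paper actually does at each closed orbit is linearize to the normal space $\v(E_s)$ and then prove Theorem~\ref{tlin} there (this is Theorem~\ref{vanishl} in the body). That proof is not a stabilizer-character computation: it proceeds by Harish-Chandra descent and induction on $\mathrm{sdim}(E)$ to reduce to the nilpotent cone $\CN_\v$, and then uses the Fourier transform together with a homogeneity theorem of Aizenbud--Gourevitch. The goodness hypothesis enters only at this last step: any nonzero $f$ supported on $\CN_\v$ with $\CF(f)$ also supported on $\CN_\v$ must contain a $\rk^\times$-eigenvector whose eigenvalue, computed via an $\sl_2$-triple through a nilpotent $\mathbf e$, forces $\gamma_\rk^{\,\tr(\widehat{\mathbf h})}\cdot|\cdot|^{-m}$ to be pseudo-algebraic for some $\tr(\widehat{\mathbf h})\in\{\pm 2,\dots,\pm 2n\}$ and $m\in\{1,\dots,2n^2\}$. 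This is precisely what goodness forbids. The ranges you quote are correct, but they come from the trace estimates on $\widehat{\mathbf h}$ and on $(2-\mathbf h)|_{\v^{\mathbf f}}$ over nilpotent orbits (Lemmas~\ref{constant} and~\ref{sinequ}), not from modular characters of orbit stabilizers in $\GL_{2n}$. Your proposal is missing the Fourier/homogeneity step entirely, and without it there is no mechanism that converts goodness into a vanishing statement.
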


Here and as usual, a superscript ``$t$" indicates the transpose of a
matrix. For the usual notion of generalized functions, see
\cite[Section 2.1]{JSZ} (archimedean case), and \cite[Section
2]{Sun1} (non-archimedean case), for examples.

Let $\pi$ be  an irreducible admissible smooth representation  of $\GL_{2n}(\rk)$, and let $\chi$ be a character of $\GL_n(\rk)\times \GL_n(\rk)$. By taking the generalized matrix coefficient as in \cite{SZ}, we produce a nonzero generalized function satisfying \eqref{inf0} from every nonzero vector in
 \[
   \Hom_{\GL_n(\rk)\times \GL_n(\rk)}(\pi, \chi)\otimes \Hom_{\GL_n(\rk)\times \GL_n(\rk)}(\pi^\vee, \chi^{-1}).
 \]
 Here and as usual, a superscript ``$\,^\vee$" indicates the contragredient representation. It is well known that (\cf \cite{GK})
 \[
   \Hom_{\GL_n(\rk)\times \GL_n(\rk)}(\pi, \chi)\cong\Hom_{\GL_n(\rk)\times \GL_n(\rk)}(\pi^\vee, \chi^{-1}).
 \]
 Thus by  Gelfand-Kazhdan criterion (\cf \cite[Theorem 2.3]{SZ}), Theorem \ref{uniquef} implies that
\be\label{atmone}
 \textrm{the  space  \eqref{unihom} is at most one dimensional if $\chi$ is good.}
 \ee
Furthermore, it is clear that the space  \eqref{unihom} is non-zero only if the restriction of $\chi$ to the center of $\GL_{2n}(\rk)$
coincides with the central character of $\pi$.
Therefore Theorem \ref{uniquel} follows from \eqref{atmone}.

Observe that the trivial character of $\GL_n(\rk)\times \GL_n(\rk)$ is good. Thus  in particular we have proved  the uniqueness of untwisted linear periods, which is first proved in \cite{JR,AG}.
Note that Theorem \ref{uniquef} is not previously known even
when $\chi$ is trivial. What Jacquet-Rallis and Aizenbud-Gourevitch
have proved is that if \eqref{inf0} holds for trivial $\chi$, then
$f(x)=f(x^{-1})$. However, this does not hold for general characters.
More precisely, suppose that a nonzero generalized function  $f$ satisfies \eqref{inf0}.
  If $f$ is invariant under the inverse map, then
\[\chi(h)f(x)=f(xh)=f(h^{-1}x^{-1})=\chi(h^{-1})f(x^{-1})=\chi^{-1}(h)f(x).\]
This forces  $\chi$ to be a quadratic character.  Hence the method of \cite{JR,AG} can not be applied directly to the general case.

By linearization,  Theorem \ref{uniquef} is reduced to the following three assertions.

\begin{introtheorem}\label{tlin}
(a) Let $f$ be a generalized function on $\oM_{n}(\rk)$ such that for all $g\in \GL_n(\rk)$,
\[
  f(gxg^{-1})=f(x), \quad x\in \oM_n(\rk).
\]
 Then $f(x)=f(x^t)$. $\smallskip$

\noindent
(b)  Let $f$ be a generalized function on $\oM_{n}(\rk)\times \oM_{n}(\rk) $ such that for all $g,h\in \GL_n(\rk)$,
\[
  f(gxh^{-1}, hyg^{-1})=f(x,y), \quad (x,y)\in \oM_n(\rk)\times \oM_{n}(\rk).
\]
 Then $f(x,y)=f(x^t, y^t)$. $\smallskip$

\noindent
(c) Let $\gamma$ be a good  character of $\GL_n(\rk)$ and let $f$ be a generalized function on $\oM_{n}(\rk)\times \oM_{n}(\rk) $ such that for all $g,h\in \GL_n(\rk)$,
\[
  f(gxh^{-1}, hyg^{-1})=\gamma(g)\gamma(h^{-1}) f(x,y), \quad (x,y)\in \oM_n(\rk)\times \oM_{n}(\rk).
\]
Then $f(x,y)=f(y^t, x^t)$.
\end{introtheorem}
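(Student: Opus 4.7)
The plan is to prove parts (a), (b), and (c) in order, with (a) serving as classical input and (b), (c) reduced to it through an open-orbit analysis combined with Bernstein's localization principle. Part (a) is the classical Gelfand--Kazhdan-type theorem: every $\GL_n(\rk)$-conjugation-invariant generalized function on $\oM_n(\rk)$ is automatically transpose-invariant. This is known in both the non-archimedean (Gelfand--Kazhdan) and the archimedean (via Harish-Chandra descent on reductive Lie algebras) settings, so I would invoke it without reproof.

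For Part (b), I would restrict to the open dense set $U = \{(x, y)\in\oM_n^2 : y\in\GL_n(\rk)\}$. Applying $(g, h) = (e, y^{-1})$ sends $(x, y)$ to $(xy, e)$, so the invariance of $f$ gives $f(x, y) = \phi(xy)$ on $U$, where $\phi(a) := f(a, e)$. A direct check shows $\phi$ is conjugation-invariant, and Part (a) gives $\phi(a^t) = \phi(a)$. Since $yx = y(xy)y^{-1}$ is conjugate to $xy$ on $U$, one obtains $f(x^t, y^t) = \phi(x^t y^t) = \phi((yx)^t) = \phi(yx) = \phi(xy) = f(x, y)$ on $U$, and the analogous identity holds on $\{(x, y) : x\in\GL_n\}$ by symmetry. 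The identity is then extended to the positive-codimension complement via Bernstein's localization combined with induction on $n$: each singular stratum admits a normal slice, transversal to the $\GL_n\times\GL_n$-orbits, whose distribution-theoretic analysis reduces to a smaller-rank instance of the same theorem.

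For Part (c), the same manipulation---now producing the eigenvalue $\gamma(y)$---gives $f(x, y) = \gamma(y)^{-1}\phi(xy)$ on $U$ with $\phi$ conjugation-invariant on $\oM_n$. The target identity $f(x, y) = f(y^t, x^t)$ reduces, via Part (a), to the distributional constraint $(\gamma(y)^{-1} - \gamma(x)^{-1})\phi(xy) = 0$ on $\{x, y \in \GL_n\}$. Restricting to each slice $\{y = y_0\}$ forces $\phi$ to be supported in $\{a : \gamma(a) = \gamma(y_0)^2\}$; intersecting over all $y_0$ kills $\phi$ unless $\gamma^2 = 1$, in which case a direct check verifies that the constraint is automatically satisfied on $\mathrm{supp}(\phi) \subseteq \ker\gamma$. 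The resulting identity on the open set is then propagated to all of $\oM_n^2$ by Bernstein's localization; the goodness of $\gamma$ enters at each singular stratum, where the descent normal to the stratum produces a twisted-equivariance governed by a character of the form $\eta^{2r}|\cdot|^{-m}$ with $r\in\{\pm 1, \dots, \pm n\}$ and $m\in\{1, \dots, 2n^2\}$, and the non-pseudo-algebraicity of such characters---the defining property of goodness---excludes nontrivial solutions.

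The principal obstacle is the Bernstein-localization step in Part (c): carefully tracking the eigenvalues of the normal bundle at each singular stratum and matching them against the goodness condition. The specific ranges $r\in\{\pm 1, \dots, \pm n\}$ and $m\in\{1, \dots, 2n^2\}$ in the definition of goodness are dictated, respectively, by the possible rank-drops of the $(x, y)$-blocks and by the ambient dimension $\dim\oM_n^2 = 2n^2$, so the technical crux is to carry out this descent bookkeeping carefully across all strata.
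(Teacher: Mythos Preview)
Your proposal for Part~(c) has a genuine logical gap. You write that on the open set $\GL_n\times\GL_n$ the target identity $f(x,y)=f(y^t,x^t)$ is \emph{equivalent} to the constraint $(\gamma(y)^{-1}-\gamma(x)^{-1})\phi(xy)=0$, and then proceed to deduce support restrictions on $\phi$ \emph{from} this constraint. But the constraint is exactly the statement you are trying to prove; nothing in the $\gamma\otimes\gamma^{-1}$-equivariance of $f$ alone forces it. Indeed, any conjugation- and transpose-invariant generalized function $\phi$ on $\GL_n$ produces, via $f(x,y):=\gamma(y)^{-1}\phi(xy)$, a $\gamma\otimes\gamma^{-1}$-equivariant generalized function on $\GL_n\times\GL_n$ for which $f(x,y)-f(y^t,x^t)=(\gamma(y)^{-1}-\gamma(x)^{-1})\phi(xy)$ is typically nonzero. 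The crucial point is that such an $f$ need not extend across the singular strata to all of $\oM_n\times\oM_n$, and it is precisely this global existence that must be exploited; the open-orbit picture by itself carries no information about it. Your subsequent appeal to Bernstein localization on the singular strata cannot rescue the argument, because the identity has not yet been established even on the open set.

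The paper's proof is organized quite differently. Parts~(b) and~(c) are reformulated as a single vanishing statement $C^{-\infty}_{\breve\chi}(\v(E))=0$ for a suitable character $\breve\chi$ of an extended group $\breve\oH_\alpha(E)$ (with $\alpha=1$ and $\alpha=-1$ respectively). The key structural input is that the Fourier transform on $\v(E)$ preserves this space. One then runs Harish-Chandra descent: a semisimple element $s\in\v(E)\setminus\v(A)$ passes to a strictly smaller problem over the descended algebra $A_s$, which by the structure theory of Section~2 is either complex (reducing to Part~(a)) or has a complex factor, so induction on $\operatorname{sdim}(E)$ shows any $f$ in the space is supported in the nilpotent cone $\CN_E$. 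Since the same holds for $\CF(f)$, one is left with $f$ and $\CF(f)$ both supported in $\CN_E$. The goodness condition enters only at this final stage: for each nilpotent orbit one computes, via graded $\sl_2$-triples, the possible $\rk^\times$-eigenvalues on the space of equivariant distributions supported there (Lemmas~3.11--3.14), and compares them against the homogeneity forced by Aizenbud--Gourevitch's criterion (Proposition~3.15). The precise ranges $r\in\{\pm1,\dots,\pm n\}$ and $m\in\{1,\dots,2n^2\}$ arise from $\tr(\widehat{\mathbf h})\in\{0,\pm2,\dots,\pm2n\}$ and $2n^2<\tr((2-\mathbf h)|_{\v^{\mathbf f}})\le 4n^2$, not from a stratification by rank-drops as you suggest. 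Your Part~(b) outline is closer in spirit to Jacquet--Rallis and could likely be completed along those lines, but your Part~(c) argument needs to be replaced by the Fourier-plus-descent-to-nilpotent-cone mechanism.
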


Part (a) of Theorem \ref{tlin} is well-known,  \cf  \cite[Theorem 2.1]{SZ2},  \cite[Proposition 4.I.2]{MVW} and \cite{AGRS}. By the method of \cite{LST},  Part (b) of Theorem \ref{tlin} implies the following particular case of the multiplicity one result of local theta correspondence:
\begin{equation}\label{homgg}
 \dim \Hom_{\GL_n(\rk)\times \GL_n(\rk)} (\CS(\oM_n(\rk)), \pi\widehat \otimes \pi')\leq 1.
\end{equation}
Here, $\pi,\pi'$ are irreducible admissible smooth representations of $\GL_n(\rk)$; ``$\widehat \otimes$" stands for the completed projective tensor product in the archimedean case and the algebraic tensor product in the non-archimedean case; and
$\CS(\oM_n(\rk))$ is the space of Schwartz functions on $\oM_n(\rk)$ carrying the representation of  $\GL_n(\rk)\times \GL_n(\rk)$ by the left and right translations.
It is well known that the equality in  \eqref{homgg}  holds if and only if $\pi'\cong \pi^\vee$. This is a fundamental fact in the theory of Godement-Jacquet L-functions.

Part (c) of Theorem \ref{tlin} fails for some non-good characters. For example, set
\[
  f=\frac{\textrm{a Haar measure on $\oM_n(\rk)\times\{0\}$}}{\textrm{a Haar measure on $\oM_n(\rk)\times\oM_n(\rk)$}},
\]
which is a generalized function on $\oM_n(\rk)\times\oM_n(\rk)$ satisfying
\[
  f(gxh^{-1}, hyg^{-1})=\abs{\det(g)}^n \cdot \abs{\det(h)}^{-n}\cdot f(x,y), \quad (x,y)\in \oM_n(\rk)\times \oM_{n}(\rk),
\]
for all $g,h\in \GL_n(\rk)$. But the generalized functions $f(x,y)$ and $f(y^t, x^t)$ are not equal to each other unless $n=0$.  By this example, \cite[Remark 3.1.2]{AG} implies  that Theorem \ref{uniquef} fails for some non-good characters. But we do not know whether or not Theorem \ref{uniquel} fails for some non-good characters.

Here are a few words on the organization of the paper. In Section 2, we introduce the notions of graded involutive algebras and graded Hermitian modules, and consider Harish-Chandra
descents  and MVW-extensions on them. We also introduce some characters which will occur in the proof of Theorem \ref{uniquef}.
Theorem \ref{tlin} is proved in Section 3,  and a slight generalization of Theorem \ref{uniquef} (see Theorem  \ref{vanisht}) is proved in Section 4.  Finally, it is proved in Section 5 that  Theorem \ref{uniquel} implies Theorem \ref{unique0}.

The authors thank Avraham Aizenbud for helpful email communications, and thank  Dmitry Gourevitch for a critical bibliographical remark.
F. Chen was supported in part by the National Natural Science Foundation of China (No.11501478)  and the Fundamental Research Funds for the Central University (No.20720150003).
B. Sun was supported in part by the National Natural Science Foundation of China (No. 11525105, 11688101,
11621061
and 11531008).

\section{Graded Hermitian modules}

As in the Introduction, fix a local field $\rk$ of characteristic zero.

\subsection{Hermitian modules and MVW-extensions}
By an involutive algebra, we mean   a commutative semisimple finite-dimensional $\rk$-algebra equipped
with an involutive $\rk$-algebra automorphism of it.  We use $\tau$ to indicate the given involutive automoprhisms of various involutive algebras. Let $A$ be an involutive algebra in this subsection.  We say that $A$ is simple if it is non-zero, and has no non-zero proper $\tau$-stable ideal. This is equivalent to saying that $A$ is either a field or the product of two fields which are exchanged by $\tau$.
In general, $A$ is uniquely a product of simple involutive algebras.

Let $E$ be a Hermitian $A$-module, namely,
a  finitely generated $A$-module equipped with a non-degenerate $\rk$-bilinear map
$\la\,,\,\ra_E: E\times E\rightarrow A$ which satisfies that
\[
     \la u,v\ra_E=\la v,u\ra_E^\tau \quad\textrm{and}\quad \la a.u,v\ra_E=a\la u,
     v\ra_E,\quad a\in A,\, u,v\in E.
\]
Note that if $A$ is simple, then $E$ is free as an $A$-module.

Write $\oG(E)$ for the group of all $A$-module automorphisms of $E$ which preserve the Hermitian form.
The MVW-extension of $\oG(E)$, denoted by $\breve\oG(E)$, is defined to be the subgroup  of $\GL(E_\rk)\times \{\pm 1\}$ consisting of all pairs $(g,\delta)$ such that
either $\delta=1$ and  $g\in \oG(E)$, or
\[
  \delta=-1 \quad \textrm{and}  \quad\la g.u, g.v\ra_E=\la v, u\ra_E, \ u,v\in E.
\]
Here $E_\rk$ stands for the underlying $\rk$-vector space of $E$.
It is well-known that
the group $\breve \oG(E)$ contains $\oG(E)$ as a subgroup of index $2$ (\cite{MVW}).

We are particularly interested in the case when $A=\rk\times \rk$ and $\tau$ equals the coordinate exchange map.  In this case
\begin{equation}\label{gln}
  \oG(E)=\GL( e_1 E)\cong \GL_n(\rk), \quad (n:=\rank_A E),
\end{equation}
and
\[
  \breve \oG(E)\cong \{\pm 1\}\ltimes \GL_n(\rk),
\]
where $e_1$ denotes the element $(1,0)$ of $A$, and the semi-direct product is defined by the action
\[
(-1).g=g^{-t},\quad g\in \GL_n(\rk).
\]
\subsection{Graded modules}

By a graded algebra, we mean a commutative semisimple finite-dimensional $\rk$-algebra $A$, equipped
with a $\Z/2\Z$-grading $A=A_0\oplus A_1$ such that
\[
 1\in A_0,\quad\rk. A_i\subset A_i\quad \textrm{and}\quad A_i\cdot A_j\subset A_{i+j},\quad i,j\in \Z/2\Z.
\]
Let $A=A_0\oplus A_1$ be a graded algebra in this subsection.

\begin{dfn}
We say that $A$
is  complex if $A_1$ contains an invertible element of $A$. We say  that $A$ is real if $A_1=0$.
\end{dfn}

The following lemma is obvious.
\begin{lemd}\label{cac}
Let $A\rightarrow A'$ be a homomorphism of graded  algebras (that is, a $\rk$-algebra homomorphism preserving the gradings). If $A$ is complex, then $A'$ is also complex.
\end{lemd}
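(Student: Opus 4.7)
The approach is to push the witness of complexity forward along the homomorphism. Write $\phi \colon A \to A'$ for the given graded algebra map. By hypothesis there exists $a \in A_1$ that is invertible in $A$, so there is some $b \in A$ with $ab = 1$. The plan is to show that $\phi(a) \in A'_1$ is invertible in $A'$, which immediately gives that $A'$ is complex.

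The verification splits into two pieces, each essentially built into the definitions. First, $\phi(a) \in A'_1$ because $\phi$ respects the $\Z/2\Z$-grading, which is precisely what it means for $\phi$ to be a morphism of graded algebras. Second, since $\phi$ is a unital $\rk$-algebra homomorphism, $\phi(a)\phi(b) = \phi(ab) = \phi(1) = 1$, so $\phi(b)$ is an inverse of $\phi(a)$ in the commutative ring $A'$. Combined, these say that $A'_1$ contains an invertible element of $A'$, which is the definition of complexity.

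I do not anticipate any real obstacle: the lemma is essentially tautological once the definitions are unpacked, since the two conditions defining complexity — membership in the odd component and invertibility — are each preserved by any unital morphism of graded $\rk$-algebras. As a side remark that is not needed for the argument, one could observe that $b$ must itself lie in $A_1$, since in any $\Z/2\Z$-graded commutative ring the inverse of a homogeneous invertible element is homogeneous of the same degree (comparing graded components of $ab = 1$ and using that $a$ is not a zero divisor).
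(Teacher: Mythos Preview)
Your argument is correct and is exactly the obvious verification the paper has in mind; the paper simply states that the lemma is obvious and gives no proof. Your unpacking of the definitions---pushing an invertible odd element along a unital graded homomorphism---is the intended one-line justification.
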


\begin{dfn}
A graded $A$-module is a finitely generated $A$-module $E$, equipped with a $\Z/2\Z$-grading $E=E_0\oplus E_1$ such that
\[
   A_i. E_j\subset E_{i+j} ,  \quad i,j\in \Z/2\Z.
\]
\end{dfn}

Let $E=E_0\oplus E_1$ be a graded $A$-module in this subsection.
\begin{dfn}
 We say that  $E$ is complex if  $E_0$ and $E_1$ are isomorphic to each other as $A_0$-modules.
 \end{dfn}

 The following lemma  is obvious.
 \begin{lemd}\label{compcri} Let $E=E'\oplus E''$ be a direct sum of graded $A$-modules.
 If two of $E, E', E''$ are complex, then so is the third one.
 \end{lemd}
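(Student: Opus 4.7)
The plan is to unpack what ``complex'' means and observe that the grading on $E=E'\oplus E''$ is componentwise, so that $E_i=E'_i\oplus E''_i$ for $i\in\Z/2\Z$. Thus the lemma asserts: if two of the three pairs $(E_0,E_1)$, $(E'_0,E'_1)$, $(E''_0,E''_1)$ consist of isomorphic $A_0$-modules, then so does the third. The forward direction (both summands complex implies $E$ complex) is immediate by taking direct sums of the isomorphisms. The other two directions are symmetric, so everything reduces to a cancellation property for finitely generated $A_0$-modules: if $M\oplus X\cong M\oplus Y$, then $X\cong Y$.

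To secure this cancellation, I would first argue that $A_0$ is itself commutative semisimple and finite-dimensional over $\rk$. Writing $A=\prod K_i$ as a product of fields, the grading is encoded by the degree involution $\sigma$ (which is $+1$ on $A_0$ and $-1$ on $A_1$); since $\sigma$ is an $\rk$-algebra automorphism, it permutes the factors $K_i$. On a fixed factor the degree-zero part is either all of $K_i$ or a subfield of index $2$; on a swapped pair the degree-zero part is a diagonal copy of $K_i$. In every case $A_0$ is a product of fields.

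With $A_0$ semisimple commutative, finitely generated $A_0$-modules are classified up to isomorphism by their dimension vectors over the factor fields, and isomorphism class is additive on direct sums. Cancellation is then immediate. Applying it: if $E$ and $E'$ are complex, then $E'_0\oplus E''_0\cong E'_1\oplus E''_1$ and $E'_0\cong E'_1$ force $E''_0\cong E''_1$, so $E''$ is complex; the case where $E$ and $E''$ are complex is handled identically.

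The only point that requires any thought is the semisimplicity of $A_0$, which I expect to be the main (but very mild) obstacle; once this is in place, the rest is a one-line dimension argument. Given the paper's remark that the lemma is ``obvious,'' this structural observation about $A_0$ is presumably the intended justification.
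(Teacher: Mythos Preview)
Your argument is correct, and since the paper omits the proof entirely (declaring the lemma ``obvious''), your cancellation argument via the semisimplicity of $A_0$ is exactly the kind of justification intended. One small streamlining: to see that $A_0$ is a product of fields you need not analyze how the grading involution permutes the simple factors of $A$; it is enough to observe that $A_0$ is a commutative finite-dimensional $\rk$-algebra which is reduced (any nilpotent of $A_0$ is nilpotent in the semisimple ring $A$, hence zero), and a reduced commutative Artinian ring is automatically a finite product of fields.
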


Note that  $A\otimes_{A_0} E_0$ is naturally a graded $A$-module, and the  obvious $A$-module homomorphism
\be\label{e0a}
  A\otimes_{A_0} E_0\rightarrow E
\ee
is a homomorphism of graded $A$-modules, that is, it preserves the gradings.

\begin{lemd}\label{a0e}
If $A$ is complex, then $E$ is complex and the homomorphism \eqref{e0a} is an isomorphism.
\end{lemd}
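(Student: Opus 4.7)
My approach is to exploit the fact that an invertible element $u\in A_1$ trivializes the $\Z/2\Z$-grading on both $A$ and $E$ simultaneously. As a preliminary I would first check that $u^{-1}$ automatically lies in $A_1$: writing $u^{-1}=v_0+v_1$ with $v_i\in A_i$, the identity $u\cdot u^{-1}=1$ decomposes along the grading into $uv_0\in A_1$ and $uv_1\in A_0$; since $1\in A_0$ this forces $uv_0=0$, and invertibility of $u$ then forces $v_0=0$.

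Next, since $A_1. E_0\subset E_1$ and $A_1. E_1\subset E_0$, and since $A$ is commutative, multiplication by $u$ gives $A_0$-linear maps $E_0\to E_1$ and $E_1\to E_0$, with multiplication by $u^{-1}$ providing a two-sided inverse in each direction. Hence $E_0\cong E_1$ as $A_0$-modules, so $E$ is complex. Applying the same argument with $E=A$ itself yields $A_1=A_0\cdot u\cong A_0$ as an $A_0$-module, freely generated by $u$.

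It remains to verify that the natural map $\phi\colon A\otimes_{A_0}E_0\to E$ of \eqref{e0a} is an isomorphism. Because $\phi$ preserves gradings, I would examine each homogeneous component separately. On the even part, $A_0\otimes_{A_0}E_0\to E_0$ is the canonical identification. On the odd part, using $A_1=A_0 u$ I would identify $A_1\otimes_{A_0}E_0$ with $E_0$ via $u\otimes e\leftrightarrow e$, and under this identification $\phi$ becomes the map $e\mapsto u.e$, which the previous paragraph showed is an isomorphism $E_0\to E_1$. Thus both graded components of $\phi$ are isomorphisms, and so is $\phi$ itself.

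There is no substantial obstacle here: once one notes that an invertible homogeneous element of degree one in a $\Z/2\Z$-graded commutative algebra must have its inverse in degree one as well, the rest of the argument is entirely formal. In essence the lemma expresses the principle that a $\Z/2\Z$-graded module over such an $A$ is precisely the extension of scalars of its even part from $A_0$ to $A$.
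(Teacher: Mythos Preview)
Your proof is correct and follows essentially the same approach as the paper: choose an invertible $u\in A_1$, observe that multiplication by $u$ is an $A_0$-module isomorphism $E_0\to E_1$, and that $A_1$ is free of rank one over $A_0$ with generator $u$. The paper's proof is simply more terse, asserting these facts without the explicit verification that $u^{-1}\in A_1$ or the component-by-component check of \eqref{e0a} that you provide.
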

\begin{proof}
 Take an invertible element $a\in A_1$. Then $A_1$ is a free $A_0$-module with a free generator $a$, and the multiplication by $a$ gives an $A_0$-module isomorphism $E_0\rightarrow E_1$. Thus the lemma follows.
\end{proof}

\subsection{Graded Hermitian modules and MVW-extensions}
\begin{dfn}A graded involutive algebra is a graded algebra $A=A_0\oplus A_1$ with an involutive automorphism $\tau$ on it which preserves the grading.
\end{dfn}

Thus every graded involutive algebra is a graded algebra as well as an involutive algebra.
From now on, let $A=A_0\oplus A_1$ be a graded involutive algebra.
Similar to before, we say that $A$ is simple if it is non-zero, and has no non-zero proper graded $\tau$-stable ideal. In general, $A$ is uniquely a product of   simple  graded involutive algebras.

We say that a graded involutive algebra is real or complex if it is so as a graded algebra.

\begin{lemd}\label{inva1}If $A$ is simple, then it is either real or complex.
\end{lemd}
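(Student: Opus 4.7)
The plan is to reduce the claim to a combinatorial statement about the action of a group on the primitive idempotents of $A$. Since $\operatorname{char}\rk=0$, the grading on $A$ is equivalent to giving the order-two $\rk$-algebra automorphism $\sigma$ characterized by $\sigma|_{A_i}=(-1)^i$; the assumption that $\tau$ preserves the grading is exactly that $\sigma\tau=\tau\sigma$, and a subset of $A$ is a graded ideal iff it is a $\sigma$-stable ideal. Hence graded $\tau$-stable ideals are precisely $\langle\sigma,\tau\rangle$-stable ideals.

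First I would decompose the commutative semisimple $\rk$-algebra $A$ as $\prod_{j=1}^m K_j$, with each $K_j$ a finite field extension of $\rk$, and note that every ideal of $A$ has the form $\bigoplus_{j\in T}K_j$ for some $T\subseteq\{1,\ldots,m\}$. The group $\langle\sigma,\tau\rangle$ acts on the primitive idempotents $e_1,\ldots,e_m$ by permutations, and simplicity of $A$ as a graded involutive algebra translates into transitivity of this action.

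Next I would introduce the support set
\[
S:=\{\,j\in\{1,\ldots,m\}:e_j a\neq 0\text{ for some }a\in A_1\,\}.
\]
Since $\sigma(A_1)=A_1$ and $\tau(A_1)=A_1$, and any $\varphi\in\langle\sigma,\tau\rangle$ sends $e_j a$ to $e_{\varphi(j)}\varphi(a)$, the subset $S$ is $\langle\sigma,\tau\rangle$-stable. Transitivity then forces $S=\emptyset$ or $S=\{1,\ldots,m\}$. In the first case $A_1=0$ and $A$ is real. In the second case, for each $j$ the subspace $V_j:=\{a\in A_1:e_j a=0\}$ is a proper $\rk$-subspace of the finite-dimensional $\rk$-vector space $A_1$; since $\rk$ is infinite, $A_1\neq V_1\cup\cdots\cup V_m$, and any $a$ outside this union satisfies $e_j a\neq 0$ for every $j$, hence is invertible in $A$. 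So $A_1$ contains a unit of $A$ and $A$ is complex.

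The only points requiring care are the identification of graded ideals with $\sigma$-stable ideals and the verification that $S$ is $\langle\sigma,\tau\rangle$-stable; both are short, and the proof then closes by the standard fact that a vector space over an infinite field is not a finite union of proper subspaces. I do not foresee a substantive obstacle.
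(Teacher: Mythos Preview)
Your argument is correct, but it takes a considerably longer and more structural route than the paper's. The paper observes directly that if $A_1\neq 0$ then, since $\tau$ is an involution preserving $A_1$ and $\operatorname{char}\rk\neq 2$, there is a nonzero $a\in A_1$ with $a^\tau=\pm a$; the principal ideal $Aa$ is then automatically graded (because $a$ is homogeneous) and $\tau$-stable (because $a^\tau\in\rk a$), hence equals $A$ by simplicity, so $a$ is a unit. This is a two-line argument requiring no decomposition of $A$ and no auxiliary combinatorics. Your approach via the $\langle\sigma,\tau\rangle$-action on primitive idempotents, the support set $S$, and the ``no finite union of proper subspaces'' trick is valid and does make the structure of simple graded involutive algebras more explicit (simplicity $\Leftrightarrow$ transitivity on idempotents), but for the bare dichotomy real/complex it is heavier machinery than needed: the paper sidesteps the idempotent decomposition entirely by working with a single well-chosen generator of a principal ideal.
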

\begin{proof}
If $A$ is not real, then there is a non-zero element $a$ in $A_1$ such that $a^\tau=\pm a$.
Note that $Aa$ is a non-zero graded $\tau$-stable ideal of $A$. Then $A=Aa$, which implies that $a$ is invertible.
\end{proof}

Note that $A_0$ is obviously an involutive algebra.
\begin{lemd}If $A$ is simple, then the involutive algebra $A_0$ is simple.
\end{lemd}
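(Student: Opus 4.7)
The strategy is to lift $\tau$-stable ideals of $A_0$ to graded $\tau$-stable ideals of $A$, and then invoke the simplicity of $A$ as a graded involutive algebra. Nothing else from the excerpt is needed; in particular I would not use the real/complex dichotomy of Lemma~\ref{inva1} (it is not necessary, though it offers a separate-case alternative).

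First I would note that $A_0 \neq 0$, since $1 \in A_0$ and $A$ is non-zero by hypothesis. It remains to show that $A_0$ has no non-zero proper $\tau$-stable ideal. Let $I \subseteq A_0$ be such an ideal and form the ideal $AI \subseteq A$. Using the grading $A = A_0 \oplus A_1$, one has $AI = A_0 I + A_1 I$ with $A_0 I \subseteq A_0$ and $A_1 I \subseteq A_1$. Because $I$ is an ideal of $A_0$ containing $1 \cdot I$, one has $A_0 I = I$. Hence $AI$ is a graded ideal of $A$ whose degree-zero component is exactly $I$ and whose degree-one component is $A_1 I$; and it is $\tau$-stable since $A^\tau = A$ and $I^\tau = I$.

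By the simplicity of $A$ as a graded involutive algebra, $AI$ is either $0$ or all of $A$. Taking degree-zero parts yields $I = 0$ or $I = A_0$, respectively. Thus $A_0$ is simple as an involutive algebra.

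I do not anticipate any real obstacle. The one point meriting care is the identification of the degree-zero component of $AI$ with $I$ itself (rather than something potentially larger arising from products $A_1 \cdot A_1 \cdot I$): this is automatic from the direct sum decomposition $A = A_0 \oplus A_1$, since the two summands in $AI = A_0 I + A_1 I$ already lie in distinct graded components. In the real case $A_1 = 0$ the argument degenerates trivially, so no separate case analysis is required.
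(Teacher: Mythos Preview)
Your argument is correct. The paper takes a slightly different route: it invokes the real/complex dichotomy of Lemma~\ref{inva1}, disposing of the real case trivially (since then $A=A_0$), and in the complex case uses the explicit invertible element $a\in A_1$ with $a^\tau=\pm a$ to write $A_1=A_0 a$ and then checks that $I_0\oplus I_0 a$ is a non-zero graded $\tau$-stable ideal of $A$. Your approach is more uniform: by forming $AI=I\oplus A_1 I$ directly you avoid the case split and the appeal to Lemma~\ref{inva1} entirely, at the cost of not having an explicit description of the degree-one part. Both arguments come down to the same idea of lifting an involutive ideal of $A_0$ to a graded involutive ideal of $A$; yours just does it without first analyzing the structure of $A_1$.
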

\begin{proof} If $A$ is real, then $A_0$ is obviously simple. So we  assume that $A$ is complex.
As in the proof of Lemma \ref{inva1}, take an invertible element $a\in A_1$ such that $a^\tau=\pm a$. Then $A_1=A_0 a$. Let $I_0$ be a non-zero involutive ideal of $A_0$.
Then $I_0\oplus I_0 a$ is a non-zero graded involutive ideal of $A$. Therefore $I_0\oplus I_0 a=A$ and $I_0=A_0$.
\end{proof}

\begin{dfn}
 A graded Hermitian $A$-module is a Hermitian $A$-module $E$, equipped with a $\Z/2\Z$-grading $E=E_0\oplus E_1$ such that
\[
   A_i. E_j\subset E_{i+j}  \quad \textrm{and}\quad \la E_i, E_j\ra_E\subset A_{i+j}, \quad i,j\in \Z/2\Z.
\]
\end{dfn}

Thus every graded Hermitian $A$-module is a Hermitian $A$-module as well as a graded $A$-module.
 From now on, let $E=E_0\oplus E_1$ be a graded Hermitian $A$-module. Note that both $E_0$ and $E_1$ are Hermitian $A_0$-modules:
 their Hermitian forms are given by taking the restrictions of $\la\,,\,\ra_E$. For every graded involutive quotient $A'$ of $A$  (a graded involutive quotient is a quotient by a $\tau$-stable graded ideal), the tensor product $A'\otimes_A E$ is obviously a graded Hermitian $A'$-module.

As before, denote by $E_\rk$ the underlying $\rk$-vector space of $E$. The endomorphism algebra $\End(E_\rk)$ is a $\Z/2\Z$-graded $\rk$-algebra:
\be\label{gradee}
  \End(E_\rk)={\End(E_\rk)}_0\oplus {\End(E_\rk)}_1,
\ee
where
\[
  {\End(E_\rk)}_i:=\{x\in \End(E_\rk)\mid x.E_j\subset E_{i+j}, \,j\in \Z/2\Z \},\quad i\in \Z/2\Z.
\]
For any $\Z/2\Z$-graded  vector space over $\rk$, we use ``$\,\bar{\ }\,$" to denote the involutive automorphism of it whose restriction to
the degree $i$ part is the multiplication by $(-1)^i$ ($i\in \Z/2\Z$). Specifically, this notation applies to $\End(E_\rk)$ and all graded involutive algebras.

 Denote by $\oH(E)$ the group of all $A$-module automorphisms of $E$ which preserve both the grading and the form $\la\,,\,\ra_E$. Note that
\[
  \oH(E)=\{g\in  \oG(E)\mid \bar g=g\}.
\]
Put
\[
  \oV(A):=\{a\in A^\times\mid a a^\tau=1=a\bar a\}.
\]
For each $\alpha\in \oV(A)$, write
 \[
   \breve\oH_\alpha(E):=\{(g,\delta)\in \breve \oG(E)\mid \bar g=g\textrm{ if }\delta=1;\ \bar g=\alpha g\textrm{ if }\delta=-1\}.
 \]
Note that $\breve\oH_\alpha(E)$ is a subgroup of $\breve \oG(E)$, and contains $\oH(E)$ as a subgroup of index $1$ or $2$. We call $\breve\oH_\alpha(E)$ the MVW-extension of $\oH(E)$ associated to $\alpha$.

\subsection{Harish-Chandra descent}\label{descent}
Associated to the group $\oG(E)$ we have the Lie algebra
\[
  \g(E):=\{x\in \End_A(E)\mid \la x.u, v\ra_E+\la u, x.v\ra_E=0,\,u,v\in E\}.
\]
It admits a natural $\Z/2\Z$-grading
\[
  \g(E)=\h(E)\oplus \v(E)
\]
where
\[
  \h(E):=\{x\in \g(E)\mid \bar x=x\}
  \]
   is the Lie algebra of $\oH(E)$, and
   \[
     \v(E):=\{x\in \g(E)\mid \bar x+x=0\}.
   \]
 Put
\[
  \oV(E):=\{x\in \oG(E)\mid x\bar x=1\}.
\]

Fix an element $s$ of $\oV(E)$ or $\v(E)$ which is semisimple in the sense that it is
semisimple as a $\rk$-linear operator on $E$. Denote by $A_s$ the finite-dimensional $\rk$-subalgebra of $\End_A(E)$ generated by
$s$ and the scalar multiplications from $A$. It is commutative and semisimple. Moreover, it is a graded involutive algebra:
the grading is induced by the grading \eqref{gradee}, and the involutive automorphism is induced by the anti-automorphism
\be\label{tau}
   \End_A(E)\rightarrow \End_A(E),\quad    x\mapsto   x^{\tau_E}
      \ee
specified by
\[
  \la x.u, v\ra_E=\la u, x^{\tau_E}. v\ra_E,\quad u,v\in E.
\]
 We call the graded involutive algebra $A_s$ a Harish-Chandra descent of $A$, and write $A_s=(A_s)_0\oplus (A_s)_1$ for the grading.

The natural $\rk$-algebra homomorphism $A\rightarrow A_s$ is clearly a homomorphism of graded involutive algebras, namely it preserves both the gradings and the involutions.
Assume that $E$ is faithful as an $A$-module throughout the rest of the paper. Then  the homomorphism
$A\rightarrow A_s$ is an embedding.
\begin{lemd}\label{as2}
Assume that  $A$ is simple and $s\in \oV(E)$. Then $A_s$ is complex, or the product of $A$  with a complex graded involutive algebra,
or the product of $A\times A$ with a complex graded involutive algebra. In the last case, the image of $s$ via the projection
$A_s\rightarrow A\times A$ is either $(1,-1)$ or $(-1,1)$.
\end{lemd}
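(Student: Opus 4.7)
The plan is to split on whether the graded algebra $A$ is real or complex. When $A$ is complex, Lemma \ref{cac} applied to the inclusion $A \hookrightarrow A_s$ (which is a homomorphism of graded algebras) immediately shows $A_s$ is complex, placing us in the first case of the lemma. The substantive case is when $A$ is real, meaning $A = A_0$, and the rest of the proposal addresses it.

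The first step in the real case is to decompose $s$ into homogeneous pieces. Since $s \in \oV(E)$ satisfies $s\bar s = 1$, and $s$ is a unit in $A_s$ (because $E$ is a faithful $A_s$-module on which $s$ acts invertibly), we have $\bar s = s^{-1} \in A_s$. Setting $s_0 = (s+\bar s)/2$ and $s_1 = (s-\bar s)/2$, both homogeneous components lie in $A_s$, and expanding $s\bar s = 1$ in the commutative algebra $A_s$ gives $s_0^2 - s_1^2 = 1$. Since $\tau$ preserves the grading and $\tau(s) = s^{\tau_E} = \bar s$, comparing components yields $\tau(s_0) = s_0$ and $\tau(s_1) = -s_1$. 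Using $A = A_0$ together with the relation $s_1^2 \in A[s_0]$, one concludes $A_{s,0} = A[s_0]$ and $A_{s,1} = A[s_0]\cdot s_1$.

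Now decompose $A_s = \prod_j L_j$ into a finite product of fields. On each $L_j$, either $s_1$ acts nontrivially, in which case $s_1|_{L_j}$ is an invertible degree-$1$ element and $L_j$ is complex; or $s_1|_{L_j} = 0$, in which case the degree-$1$ part of $L_j$ vanishes, $L_j$ is real, and the relation $s_1^2 = s_0^2 - 1$ forces $s_0|_{L_j} = \pm 1$. Grouping the real factors by the sign of $s_0$, one writes $A_s = L_+ \times L_- \times C$ as graded involutive algebras, where $s|_{L_\pm} = \pm 1$ and $C$ is the (possibly trivial) product of the complex factors, itself complex since the tuple of degree-$1$ invertible elements of the factors is invertible in $C$.

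To finish, one must identify $L_\pm$. The ideal $(s_0 \mp 1,\, s_1) \subset A_s$ cutting out $L_\pm$ is $\tau$-stable thanks to $\tau(s_0) = s_0$ and $\tau(s_1) = -s_1$, so the natural map $A \hookrightarrow A_s \twoheadrightarrow L_\pm$ is a $\tau$-equivariant homomorphism; modulo $s_0 \mp 1$ and $s_1$ the $A$-algebra generators of $A_s$ become scalars in $A$, so this map is surjective, exhibiting $L_\pm$ as an involutive quotient of $A$. The simplicity of $A$ then forces $L_\pm \in \{0, A\}$, and the four resulting configurations collapse to the three cases stated in the lemma. When $L_+ = L_- = A$, the projection $A_s \to A \times A$ sends $s$ to $(s|_{L_+}, s|_{L_-}) = (1, -1)$, or $(-1, 1)$ after swapping the two factors. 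The main technical hurdle is verifying the $\tau$-stability of the ideals defining $L_\pm$; without this, the simplicity of $A$ as an involutive (not merely as a $\rk$-)algebra could not be invoked, and $L_\pm$ would only be pinned down up to an arbitrary $\rk$-algebra quotient.
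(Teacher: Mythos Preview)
Your argument is correct and follows a genuinely different route from the paper's. The paper works module-theoretically: it splits $E = E' \oplus E''$ according to whether the eigenvalues of $s$ lie outside or inside $\{\pm 1\}$, shows $A_s \cong A_{s'} \times A_{s''}$ by producing the idempotent $(0,1)$ as a polynomial in $s - s^{-1}$, and then reads off that $A_{s'}$ is complex (because $s' - s'^{-1}$ is an invertible odd element) while $A_{s''}$ is $0$, $A$, or $A\times A$ according to which of $\pm 1$ actually occur. This treatment is uniform in $A$ and never invokes the real/complex dichotomy for $A$. Your approach instead works internally in the algebra $A_s$ via the homogeneous decomposition $s = s_0 + s_1$ and the relation $s_0^2 - s_1^2 = 1$; the two are linked by the observation that your $2s_1 = s - s^{-1}$ is exactly the paper's key element. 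What you gain is that you never touch $E$; what you pay is the preliminary case split on $A$.

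One imprecision worth tightening: when you write $A_s = \prod_j L_j$ as a product of fields and then call each $L_j$ ``complex'' or ``real,'' the primitive idempotents of $A_s$ need not be homogeneous, so an individual $L_j$ is not a graded subalgebra and the phrase ``$s_1|_{L_j}$ is an invertible degree-$1$ element'' has no meaning. What actually makes your decomposition $A_s = L_+ \times L_- \times C$ one of graded involutive algebras is that the \emph{grouped} idempotents are homogeneous and $\tau$-fixed: $e_C$ is a polynomial in $s_1^2 = s_0^2 - 1 \in (A_s)_0$, and $e_\pm = \tfrac{1 \pm s_0}{2}(1 - e_C) \in (A_s)_0$, all three fixed by $\tau$ since $\tau(s_0)=s_0$ and $\tau(s_1^2)=s_1^2$. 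With this in hand, $s_1|_C$ is genuinely invertible of degree $1$ in the graded algebra $C$, and your identification of $L_\pm$ as involutive quotients of $A$ goes through as written.
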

\begin{proof}
 We have an $s$-stable graded Hermitian $A$-module decomposition
$E=E'\oplus E''$   such that
\[
  s': E'\rightarrow E', \quad u\mapsto s(u)
  \]
  has no eigenvalue $1$ or $-1$, and
  \[
  s'': E''\rightarrow E'',\quad u\mapsto s(u)
  \]  has no eigenvalue other than $\pm 1$.
Note that $s'\in\oV(E')$ and $s''\in\oV(E'')$. Form the Harish-Chandra descents $A_{s'}\subset \End_A(E')$ and  $A_{s''}\subset \End_A(E'')$.

We claim that the natural map
\be
 f: A_s\rightarrow A_{s'}\times A_{s''},\quad x\mapsto (x|_{E'}, x|_{E''})
\ee
is an isomorphism of graded involutive algebras. Indeed, it is easy to see that $f$ is an injective homomorphism of
graded involutive algebras.
Note that $s'-s'^{-1}$ is invertible as $\rk$-linear map on $E'$. Thus there exist $b_1, b_2,\cdots, b_r\in \rk^\times$ ($r\geq 1$) such that
\[1+b_1(s'-s'^{-1})+b_2(s'-s'^{-1})^2+\cdots+b_r(s'-s'^{-1})^r=0.\]
Together with the fact that $s''-s''^{-1}=0$, this implies
\[f(1+b_1(s-s^{-1})+\cdots +b_r(s-s^{-1})^r)=(0,1).\]
Thus $(0,1)$ is in the image of $f$. This easily implies that $f$ is surjective.

Finally,   $A_{s'}$ is complex since it contains the invertible element $s'-s'^{-1}\in (A_{s'})_1$.
Furthermore, $A_{s''}\cong 0$, $A$, or $A\times A$, if the set
\[
\{\epsilon=\pm 1\mid \epsilon \textrm{ is an eigenvalue of } s''\}
\]
 has cardinality $0$, $1$, or $2$, respectively. This proves the lemma.
\end{proof}

Similarly,  one has the following result for $s\in \v(E)$.
\begin{lemd}\label{as}
Assume that  $A$ is simple and $s\in \v(E)$. Then  $A_s$ is complex,  or the product of $A$ with a complex graded involutive algebra.
\end{lemd}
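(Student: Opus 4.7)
The plan is to adapt the proof of Lemma~\ref{as2}, replacing the pair of multiplicative bad eigenvalues $\pm 1$ by the single additive bad eigenvalue $0$.

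First I would produce an orthogonal decomposition $E = E' \oplus E''$ of $E$ as a graded Hermitian $A$-module, by setting $E' := s(E)$ and $E'' := \ker s$. Three ingredients make this work: the relation $\bar s = -s$ forces $s$ to be of odd degree, so both summands are graded $A$-submodules; semisimplicity of $s$ as a $\rk$-linear operator gives $E = s(E) \oplus \ker s$ as vector spaces; and the relation $s^{\tau_E} = -s$ gives $s(E) \subset (\ker s)^\perp$, which together with a dimension count against the non-degenerate form on $E_\rk$ forces $s(E) = (\ker s)^\perp$ and each summand to be non-degenerate for the form. On this decomposition, $s' := s|_{E'} \in \v(E')$ is invertible while $s'' := s|_{E''} = 0$.

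Next I would form the Harish-Chandra descents $A_{s'} \subset \End_A(E')$ and $A_{s''} \subset \End_A(E'')$ and verify that the restriction map
\[
  f \colon A_s \longrightarrow A_{s'} \times A_{s''}, \qquad x \longmapsto (x|_{E'}, x|_{E''}),
\]
is an isomorphism of graded involutive algebras. Injectivity and preservation of grading and involution are immediate. For surjectivity, the key input is that the minimal $\rk$-polynomial of $s'$ has nonzero constant term, since $s'$ is semisimple with no zero eigenvalue; hence there exists $p(x) \in \rk[x]$ with $p(0) = 0$ such that $p(s')$ is the identity on $E'$, and because $p(0) = 0$ and $s|_{E''} = 0$, the element $p(s) \in A_s$ maps to $(1, 0)$ under $f$. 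Combined with $f(s) = (s', 0)$ and the diagonal embedding of $A$, this suffices to hit every element of $A_{s'} \times A_{s''}$.

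Finally I would read off the conclusion. The algebra $A_{s'}$ is complex because $s' \in (A_{s'})_1$ is invertible in the Artinian commutative algebra $A_{s'}$ (its inverse being a polynomial in $s'$). The algebra $A_{s''}$ coincides with the image of $A$ in $\End_A(E'')$; its kernel is a graded $\tau$-stable ideal of $A$, so simplicity of $A$ forces $A_{s''}$ to be either $0$ (when $E'' = 0$) or isomorphic to $A$. In the former case $A_s \cong A_{s'}$ is complex; in the latter $A_s \cong A \times A_{s'}$ is the product of $A$ with the complex graded involutive algebra $A_{s'}$. The only real steps of substance are the orthogonal decomposition and the polynomial surjectivity argument; everything else runs parallel to the proof of Lemma~\ref{as2}.
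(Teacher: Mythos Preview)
Your proposal is correct and follows essentially the same approach as the paper: decompose $E$ into the part where $s$ acts invertibly and the kernel, identify $A_s$ with $A_{s'}\times A_{s''}$ via restriction, and observe that $A_{s'}$ is complex (since $s'\in (A_{s'})_1$ is invertible) while $A_{s''}$ is either $0$ or $A$. The paper states the decomposition and then simply refers back to the proof of Lemma~\ref{as2}; you have supplied the details (orthogonality via $s^{\tau_E}=-s$, and the minimal-polynomial argument producing the idempotent $(1,0)$) that the paper leaves implicit.
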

\begin{proof}
We have an $s$-stable graded Hermitian $A$-module decomposition
$E=E'\oplus E''$   such that
\[
  s': E'\rightarrow E', \quad u\mapsto s(u)
  \]
  has no eigenvalue $0$, and
  \[
  s'': E''\rightarrow E'',\quad u\mapsto s(u)
  \]
    has no eigenvalue other than $0$.
  As in the proof of Lemma \ref{as2}, the lemma follows by showing that $A_s\cong A_{s'}\times A_{s''}$, $A_{s'}$ is complex, and $A_{s''}$ is either zero or isomorphic to $A$.
  \end{proof}

Write $E_s$ for the space $E$ viewing as an $A_s$-module. Put $(E_s)_i:=E_i$ ($i\in \Z/2\Z$). Then $E_s=(E_s)_0\oplus (E_s)_1$ is a graded $A_s$-module. As in \cite[Lemma 3.1]{Sun1}, define a Hermitian form
\[
  \la\,,\,\ra_{E_s}: E_s\times E_s\rightarrow A_s
\]
on $E_s$ by requiring that
\[
  \tr_{A_s/\rk}(a \la u,v\ra_{E_s})
= \tr_{A/\rk}(\la a.u,v\ra_E), \quad u,v\in E, \, a\in A_s.
\]

\begin{lemd}\label{hgd}
One has that
\[
   \la (E_s)_i, (E_s)_j\ra_{E_s}\subset (A_s)_{i+j},\quad i,j\in \Z/2\Z.
\]
\end{lemd}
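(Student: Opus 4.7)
The plan is to decompose $\la u, v\ra_{E_s}$ according to the grading on $A_s$ and then kill the wrong-parity piece by pairing with $\tr_{A_s/\rk}$ against odd elements. The key input is that for any graded commutative semisimple finite-dimensional $\rk$-algebra $B = B_0 \oplus B_1$, one has $\tr_{B/\rk}(B_1) = 0$: any $b \in B_1$ acts on $B$ by a $\rk$-linear operator swapping $B_0$ and $B_1$, so in a basis respecting the grading its matrix has zero diagonal. Apply this to both $A$ and $A_s$.

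Next I would record the compatibility between the grading on $A_s$ and that on $E$: since $A_s \subset \End_A(E)$ inherits its grading from $\End(E_\rk)$, one has $(A_s)_k \cdot E_l \subset E_{k+l}$ by the very definition of the grading on $\End(E_\rk)$. Combined with the grading property $\la E_i, E_j\ra_E \subset A_{i+j}$ of the Hermitian form on $E$, this gives, for $u \in E_i$, $v \in E_j$, $a \in (A_s)_k$,
\[
   \la a.u, v\ra_E \in A_{i+j+k}.
\]
In particular, when $i+j+k$ is odd, the right-hand side lies in $A_1$ and has trace zero, so $\tr_{A/\rk}(\la a.u, v\ra_E) = 0$.

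Now fix $u \in E_i$, $v \in E_j$ and write $\la u, v\ra_{E_s} = b_0 + b_1$ with $b_k \in (A_s)_k$. For $a \in (A_s)_k$, the defining relation
\[
   \tr_{A_s/\rk}(a\la u,v\ra_{E_s}) = \tr_{A/\rk}(\la a.u,v\ra_E)
\]
combined with $\tr_{A_s/\rk}((A_s)_1)=0$ yields LHS $= \tr_{A_s/\rk}(a b_k)$ (since $a b_{1-k} \in (A_s)_1$ has trace zero). Choosing $k$ so that $i+j+k$ is odd, the RHS vanishes, so $\tr_{A_s/\rk}(a b_k) = 0$ for all $a \in (A_s)_k$; for any $a' \in (A_s)_{1-k}$ the product $a' b_k$ lies in $(A_s)_1$ and already has zero trace, hence $\tr_{A_s/\rk}(a b_k) = 0$ for all $a \in A_s$. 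Nondegeneracy of the trace form on the commutative semisimple algebra $A_s$ forces $b_k = 0$, leaving $\la u, v\ra_{E_s} = b_{1-k} \in (A_s)_{i+j}$, as desired.

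The only non-routine ingredient is the vanishing $\tr_{A_s/\rk}((A_s)_1)=0$ and the analogous statement for $A$; the rest is bookkeeping with the definitions. I do not anticipate any serious obstacle, since nondegeneracy of the trace form on a commutative semisimple finite-dimensional $\rk$-algebra is standard.
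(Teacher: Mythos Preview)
Your argument is correct. Both your proof and the paper's rest on the same two ingredients: the vanishing $\tr_{B/\rk}(B_1)=0$ for a graded algebra $B$, and nondegeneracy of the trace form on the semisimple algebra $A_s$. The difference is organizational. You decompose $\la u,v\ra_{E_s}=b_0+b_1$ by hand and kill the wrong-parity piece by testing against all of $A_s$. The paper instead works with the bar involution $\bar{\,\cdot\,}$ (which is $(-1)^i$ on degree $i$) and shows directly, for arbitrary $a\in A_s$, that
\[
\tr_{A_s/\rk}\bigl(a\,\overline{\la u,v\ra_{E_s}}\bigr)=\tr_{A_s/\rk}\bigl((-1)^{i+j}a\,\la u,v\ra_{E_s}\bigr),
\]
whence $\overline{\la u,v\ra_{E_s}}=(-1)^{i+j}\la u,v\ra_{E_s}$ by nondegeneracy. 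Each step in that chain uses either the defining relation of $\la\,,\,\ra_{E_s}$ or the invariance $\tr(\bar c)=\tr(c)$, which is exactly your observation $\tr(B_1)=0$ repackaged. The paper's route avoids the case split on $k$ and the separate treatment of $a\in(A_s)_k$ versus $a'\in(A_s)_{1-k}$; yours makes the mechanism more transparent. Neither has any real advantage in strength or generality.
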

\begin{proof}
Let $u\in (E_s)_i$ and $v\in (E_s)_j$. For each $a\in A_s$, one has that
\begin{eqnarray*}
   \tr_{A_s/\rk}(a \overline{\la u,v\ra_{E_s}}\,)& =&\tr_{A_s/\rk}( \bar a \la u,v\ra_{E_s})\\
   &=& \tr_{A/\rk}(\la  \bar a. u,v\ra_E)\\
&  =&\tr_{A/\rk}(\overline{\la  \bar a. u,v\ra_E}\,)\\
   &=& \tr_{A/\rk}((-1)^{i+j}\la  a. u,v\ra_E)\\
  &= &\tr_{A_s/\rk}((-1)^{i+j} a \la u,v\ra_{E_s}).
\end{eqnarray*}
Therefore $\overline{\la u,v\ra_{E_s}}=(-1)^{i+j}  \la u,v\ra_{E_s}$ and the lemma follows.
\end{proof}

By Lemma \ref{hgd}, $E_s$ is a graded Hermitian $A_s$-module. We call it a Harish-Chandra descent of $E$.

We say that a graded Hermitian $A$-module is complex if it is so as a graded $A$-module.

\begin{lemd}\label{hcc}
Assume that $s\in \v(E)$ and  $E$ is complex. Then the Harish-Chandra descent $E_s$ of $E$ is also complex.
\end{lemd}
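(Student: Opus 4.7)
The strategy is to exploit the semisimplicity of $s$ to decompose $E$ into a part where $s$ is invertible (which is automatically complex) and a part where $s$ acts trivially (on which complexity is transferred from $E$ via Lemma \ref{compcri}). First, I would produce a decomposition $E = E' \oplus E''$ of graded $A$-submodules with $s|_{E'}$ invertible and $s|_{E''} = 0$. Since $s \in \v(E)$ has degree $1$ in the $\Z/2\Z$-grading of $\End(E_\rk)$, the operator $s^2 \in \End_A(E)$ has degree $0$, and it is semisimple because $s$ is and $\rk$ has characteristic zero. Thus $s^2$ preserves the grading and its eigenspace decomposition yields $A$-submodules that are automatically graded. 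Taking $E'' := \ker(s^2) = \ker(s)$ (the equality using semisimplicity of $s$) and letting $E'$ be the sum of the remaining $s^2$-eigenspaces gives the desired decomposition.

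On $E'$, the element $s \in (A_s)_1$ acts invertibly and commutes with all of $(A_s)_0$ (by the commutativity of $A_s$), so $s$ restricts to an $(A_s)_0$-linear isomorphism $E'_0 \to E'_1$; in particular $E'$ is complex as a graded $A$-module. Combined with the hypothesis that $E$ is complex, Lemma \ref{compcri} then yields that $E''$ is complex as a graded $A$-module, hence there exists an $A_0$-linear isomorphism $E''_0 \cong E''_1$.

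The final step is to upgrade this $A_0$-linear isomorphism to an $(A_s)_0$-linear one. Since $s$ vanishes on $E''$, the action of $A_s$ on $E''$ factors through the restriction map $A_s \to A_{s''}$, and $A_{s''}$ coincides with the image of $A$ in $\End_A(E'')$ (because it is generated by this image together with $s|_{E''} = 0$). Consequently the action of $(A_s)_0$ on $E''$ factors through the image of $A_0$ in $\End_\rk(E'')$, so any $A_0$-linear map $E''_0 \to E''_1$ is automatically $(A_s)_0$-linear. Assembling the isomorphisms on $E'$ and $E''$ then produces an $(A_s)_0$-linear isomorphism $(E_s)_0 \cong (E_s)_1$, which is exactly the complexity of $E_s$. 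The only mildly subtle point is verifying that the spectral decomposition respects the grading, which is what forces me to work with $s^2$ rather than $s$ itself.
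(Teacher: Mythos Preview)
Your proof is correct and follows essentially the same approach as the paper: both split $E$ into the part $E'$ where $s$ is invertible and the kernel $E''=\ker s$, observe that $E'$ is automatically complex (you exhibit $s:E'_0\to E'_1$ directly, while the paper invokes Lemma~\ref{a0e} after noting via Lemma~\ref{as} that the corresponding factor of $A_s$ is complex), and then use Lemma~\ref{compcri} together with the fact that the $(A_s)_0$-action on $E''$ factors through $A_0$ to handle $E''$. Your argument is slightly more direct in that it avoids the reduction to simple $A$ and the explicit appeal to Lemma~\ref{as}, but the substance is the same.
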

\begin{proof}
Without loss of generality, we assume that $A$ is simple. If $A_s$ is complex, then $E_s$ is complex by Lemma \ref{a0e}.
 Using Lemma \ref{as}, we assume that  $A_s=A\times A'$ for some complex graded involutive algebra $A'$.
Note that $A'\otimes_{A_s} E_s$  is complex as a graded $A'$-module (Lemma \ref{a0e}). Then by the equality
 \be\label{comdec}
 E_s=(A\otimes_{A_s} E_s)\times (A'\otimes_{A_s} E_s),
\ee
it suffices to show that $A\otimes_{A_s} E_s$ is complex. Note that both $E_s$ and $A'\otimes_{A_s} E_s$ are complex as graded $A$-modules.
Thus by Lemma \ref{compcri},  \eqref{comdec}
implies  that the graded $A$-module $A\otimes_{A_s} E_s$ is complex.  This proves the lemma.
\end{proof}

Similarly,  we have the following  result for  $s\in \oV(E)$.
\begin{lemd}\label{cimpliesc} Assume that $E$ is complex and $s=x\bar{x}^{-1}$ for some  $x\in \oG(E)$ such that $x$ commutes with $\bar{x}$.
Then the Harish-Chandra descent $E_s$ of $E$ is also complex.
\end{lemd}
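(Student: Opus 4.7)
The plan is to mimic the proof of Lemma \ref{hcc}, using Lemma \ref{as2} in place of Lemma \ref{as} and exploiting the specific form $s=x\bar x^{-1}$ to handle the $-1$-eigenspace of $s$. As there, I first reduce to the case in which $A$ is simple, since the general case follows factor-by-factor. When the Harish-Chandra descent $A_s$ is already complex (the first alternative in Lemma \ref{as2}), Lemma \ref{a0e} applied to $E_s$ finishes the proof, so I may assume we are in one of the remaining two cases of Lemma \ref{as2}.

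Next I refine the decomposition from the proof of Lemma \ref{as2}. Set $s_0:=\tfrac{1}{2}(s+s^{-1})=\tfrac{1}{2}(s+\bar s)\in(A_s)_0$; note that $s_0$ really is degree zero in $\End(E_\rk)$ precisely because $\bar s=s^{-1}$. Since $s_0^2-1$ and $s_0\mp 1$ are degree-zero elements of the commutative algebra $A_s$, they yield a graded $A$-module decomposition
$$E = E' \oplus E^+ \oplus E^-,$$
where $E^\pm:=\ker(s_0\mp 1)$ and $E'$ is the direct sum of the remaining eigenspaces of $s_0^2$ (some of $E',E^+,E^-$ may be zero, corresponding to which case of Lemma \ref{as2} we are in). On $E'$ the element $s-s^{-1}$ is invertible, so $A_{s'}$ is complex; on $E^\pm$ one has $s=\pm 1$. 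Because $x$ commutes with $\bar x$, it commutes with both $s=x\bar x^{-1}$ and $\bar s=\bar x x^{-1}$, hence with $s_0$, so this decomposition is also $x$-stable.

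I would then verify that each summand is complex as a graded $A$-module. For $E'$ this is Lemma \ref{a0e} applied to the complex algebra $A_{s'}$. For $E^-$, the equation $s=-1$ on $E^-$ combined with $s=x\bar x^{-1}$ gives $\bar x=-x$ there; writing $x|_{E^-}=y_0+y_1$ in the grading of $\End(E^-_\rk)$ forces $y_0=0$, so $x|_{E^-}$ is odd, and being invertible it yields an $A_0$-module isomorphism $E^-_0\cong E^-_1$. For $E^+$, two applications of Lemma \ref{compcri} to the displayed decomposition, using that $E$, $E'$, $E^-$ are all complex as graded $A$-modules, give that $E^+$ is also complex. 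Reassembling, we have $A_s\cong A_{s'}\times A\times A$ with the two copies of $A$ corresponding to $E^\pm$, and $E_s = E'_{s'}\oplus E^+\oplus E^-$ is a decomposition of graded $A_s$-modules with each summand complex in its respective grading; since $s_0$ acts on $E^\pm$ as $\pm 1$, the $A_0$-module isomorphisms $E^\pm_0\cong E^\pm_1$ automatically intertwine $s_0$, and one obtains an $(A_s)_0$-module isomorphism $(E_s)_0\cong(E_s)_1$.

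The main delicate point is ensuring that the three-way decomposition is genuinely graded and $x$-stable. The operator $s$ itself need not preserve the grading, so one cannot decompose $E$ using $s$-eigenspaces directly; the correct substitute is $s_0=(s+\bar s)/2$, which lies in $(A_s)_0$ exactly because $\bar s=s^{-1}$, and which commutes with $x$ precisely because $x$ commutes with both $s$ and $\bar s$. Once this observation is in place, the rest of the argument is bookkeeping that runs in close parallel to Lemma \ref{hcc}.
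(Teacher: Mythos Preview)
Your proof is correct and follows essentially the same approach as the paper's: reduce to simple $A$, split $E$ into the pieces where $s=\pm 1$ and the remainder, use $x\bar x^{-1}=-1$ on the $-1$-piece to see that $x$ is odd there, apply Lemma~\ref{a0e} to the remainder, and finish $E^+$ via Lemma~\ref{compcri}. The only cosmetic difference is that you manufacture the degree-zero element $s_0=\tfrac12(s+\bar s)$ to make the graded nature of the eigenspace decomposition explicit, whereas the paper obtains the same decomposition by invoking the algebra splitting $A_s=A_+\times A_-\times A'$ from Lemma~\ref{as2} and noting $x\in\oG(E_s)$; your device is a pleasant way to see directly why the $\pm1$-eigenspaces of $s$ are graded submodules.
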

\begin{proof} As in the proof of  Lemma \ref{hcc}, we assume without loss of generality that $A$ is simple. If $A_s$ is complex, then the lemma follows by Lemma \ref{a0e}. If  $A_s$ is the product of a complex graded involutive algebra and $A$, then
the lemma follows by the same proof as in Lemma \ref{hcc}. Thus by Lemma \ref{as2}, we may (and do) further assume that
$A_s=A_+\times A_-\times A'$, where $A'$ is complex, $A_\pm=A$ and the image of
$s$ via the projection $A_s\rightarrow A_\pm$ is $\pm 1$.

Write $E_\pm:=A_\pm\otimes_{A_s} E_s$ and $E':=A'\otimes_{A_s} E_s$. Then we have that
\[E_s=E_+\times E_-\times E'\quad \text{and} \quad \oG(E_s)=\oG(E_+)\times \oG(E_-)\times \oG(E').\]
Note that $x\in \oG(E_s)\subset \oG(E)$.
Write $x_-$ for the image of $x$ under the projection $\oG(E_s)\rightarrow \oG(E_-)$.
Then the equality
\[x_-\bar{x}_-^{-1}=-1\]
 implies that $x_-$ exchanges $(E_-)_0$ and $(E_-)_1$. Thus  $E_-$ is complex.
Note that $E'$ is also complex (Lemma \ref{a0e}). Thus it suffices to prove that $E_+$ is complex.
Indeed, we know that  $E_s,E_-$ and $E'$  are all complex as graded  $A$-modules.
 By Lemma \ref{compcri}, this implies that $E_+$ is also complex, as required.
\end{proof}
\subsection{Complex Hermitian modules over split graded involutive algebras}
Note that every involutive algebra is the product of all its simple involutive quotients (an involutive quotient is a quotient by a $\tau$-stable ideal), and that every simple involutive algebra is either a field, or
the product of two fields which are exchanged by the involutive automorphism.
\begin{dfn}
We say that $A$ is split if every simple involutive quotient of $A_0$ is
the product of two fields which are exchanged by the involutive automorphism.
\end{dfn}

Let $A\rightarrow A'$ be a homomorphism of graded involutive algebras. If $A$ is split, then $A'$ is also split.
In particular, we get the following lemma.
\begin{lemd}\label{sis}
The Harish-Chandra descent of a split graded involutive
algebra is also split.
\end{lemd}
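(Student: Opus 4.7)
The plan is to reduce the lemma to the general remark stated immediately before it --- that a homomorphism of graded involutive algebras sends split to split --- applied to the natural map $A \hookrightarrow A_s$. The paper already observes that this map is a homomorphism of graded involutive algebras, so the proof really consists of justifying that general remark and then citing this observation.

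The key step is a convenient reformulation of the splitness condition. I claim that a graded involutive algebra $A$ is split if and only if there exists an idempotent $e \in A_0$ with $e + e^\tau = 1$. For the forward direction, write $A_0 = \prod_i B_i$ as a product of its simple involutive quotients; splitness gives $B_i \cong K_i \times K_i$ with $\tau$ exchanging the two factors, so the primitive idempotent $(1,0) \in B_i$ and its image under $\tau$ sum to $1_{B_i}$, and assembling these idempotents over $i$ yields the desired $e$. For the reverse direction, let $B$ be any simple involutive quotient of $A_0$; if $B$ were a field with involution, then the image $\bar e \in B$ would be an idempotent, hence $\bar e \in \{0,1\}$, which forces $\bar e + \bar e^\tau \in \{0,2\}$, contradicting $\bar e + \bar e^\tau = 1$ in characteristic zero. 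Hence $B$ must be a product of two exchanged fields, so $A$ is split.

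With this characterization in hand the preceding general remark is immediate: for any homomorphism $\phi: A \to A'$ of graded involutive algebras, $\phi(e) \in A'_0$ is an idempotent satisfying $\phi(e) + \phi(e)^\tau = \phi(e + e^\tau) = \phi(1) = 1$, so $A'$ is split. Applying this to the natural embedding $A \hookrightarrow A_s$ from Subsection \ref{descent} yields the lemma. I do not anticipate any serious obstacle: the idempotent characterization is essentially a one-line check using that $\rk$ has characteristic zero, and the rest is immediate from the already-recorded fact that $A \to A_s$ preserves both the grading and the involution.
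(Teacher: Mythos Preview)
Your proposal is correct and follows the paper's approach exactly: the paper simply states the general remark that homomorphisms of graded involutive algebras preserve splitness and then deduces the lemma from the already-observed fact that $A\to A_s$ is such a homomorphism. Your idempotent characterization is a clean way to supply the justification of that general remark, which the paper leaves without proof.
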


 Let $\rk'$ be a field extension  of $\rk$ of finite degree. With the  coordinate exchanging automorphism, $\rk'\times \rk'$ is obviously a simple, real, split graded involutive algebra.
  Let $\rk''$ be a  quadratic separable algebra over $\rk'$.
 It is thus either a quadratic field extension of $\rk'$, or a product of two copies of $\rk'$.
  We view $\rk''$ as a graded algebra so that its degree $0$ subalgebra equals $\rk'$.   Then $\rk''\times \rk''$ is also a graded algebra.
   Together with the  coordinate exchanging automorphism, $\rk''\times \rk''$ becomes a graded involutive algebra which is simple, split and complex.
    Conversely, we have the following elementary lemma whose proof is omitted.

   \begin{lemd}\label{simplerealcomplex}
Every  real, simple, split  graded involutive algebra has the form $\rk'\times \rk'$ as above; and
every  complex, simple, split graded involutive algebra has the form $\rk''\times \rk''$ as above.
\end{lemd}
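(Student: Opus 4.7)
For the real case, the hypothesis $A_1=0$ forces $A=A_0$, and the notions of ``graded $\tau$-stable ideal of $A$'' and ``$\tau$-stable ideal of $A_0$'' coincide. Thus $A$ is simple as an involutive algebra. Since every involutive algebra is the product of all its simple involutive quotients, $A$ equals its unique simple involutive quotient, and by the splitness hypothesis this quotient is of the form $\rk'\times \rk'$ (with coordinate exchange) for some finite field extension $\rk'/\rk$. This is exactly the claimed form.

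For the complex case, I would first invoke the earlier lemma that $A_0$ is itself simple as an involutive algebra, and combine it with splitness to conclude $A_0\cong \rk'\times \rk'$ for some finite field extension $\rk'/\rk$, with coordinate-exchange involution. Let $e_1,e_2\in A_0$ be the two primitive idempotents; they lie in degree $0$ and satisfy $e_1+e_2=1$ and $\tau(e_1)=e_2$. Hence $A=e_1A\oplus e_2A$ is a direct product of graded $\rk$-algebras which is exchanged by $\tau$, so $e_1A\cong e_2A$ as graded $\rk$-algebras. It therefore suffices to identify $e_1A$ as a graded $\rk'$-algebra with $(e_1A)_0\cong \rk'$.

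To do this, I would take an invertible element $a\in A_1$ (which exists because $A$ is complex) and set $a_1:=e_1a\in (e_1A)_1$. Commutativity of $A$ together with $a a^{-1}=1$ yields $a_1\cdot e_1 a^{-1}=e_1$, so $a_1$ is a unit in $e_1A$; in particular $(e_1A)_1=\rk'\,a_1$. Since $a_1^2\in (e_1A)_0=\rk'$ and $a_1$ is a unit, $c:=a_1^2\in \rk'^\times$, giving $e_1A\cong \rk'[x]/(x^2-c)$. Because $\operatorname{char}\rk=0$, the polynomial $x^2-c$ is separable over $\rk'$, so $\rk'':=\rk'[x]/(x^2-c)$ is a quadratic separable $\rk'$-algebra (either a quadratic field extension of $\rk'$ or $\rk'\times \rk'$). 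Combining the two factors via the $\tau$-exchange gives $A\cong \rk''\times \rk''$ with the coordinate-exchange involution, as claimed.

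The argument is essentially a structural bookkeeping; no serious obstacle is anticipated. The only point requiring care is verifying that the idempotent decomposition $A=e_1A\oplus e_2A$ respects both the grading and the involution simultaneously, which is ensured by the fact that $e_1,e_2$ lie in $A_0$ and are exchanged by $\tau$.
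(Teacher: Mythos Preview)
Your proof is correct. The paper actually omits the proof of this lemma entirely, calling it elementary, so there is no argument to compare against; your verification is a natural and complete one, relying exactly on the earlier lemma that $A_0$ is simple as an involutive algebra together with the splitness hypothesis and the characteristic-zero assumption to identify $e_1A$ with a quadratic separable $\rk'$-algebra.
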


Only complex  graded Hermitian modules over split graded involutive algebras will appear in the proof of Theorem \ref{uniquef}.
Thus, in the rest part of this paper, we assume that
\begin{itemize}
\item
the graded involutive algebra $A$  is split, and the graded Hermitian $A$-module $E$
is  complex.
\end{itemize}

Fix an element $\alpha\in \oV(A)$.

\begin{lemd}\label{beta0}
If $A$ is  complex, then there is an element $\beta\in A^\times$ such that
\[
 \beta \beta^\tau =1 \quad\textrm{and}\quad {\beta}{\bar \beta}^{-1}=\alpha.
\]
\end{lemd}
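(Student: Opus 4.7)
The plan is to reduce $A$ to its simple graded involutive factors, normalize each via Lemma \ref{simplerealcomplex}, and then appeal to Hilbert's Theorem 90. First I would decompose $A$ uniquely as a product $A = \prod_i A^{(i)}$ of simple graded involutive algebras. This decomposition respects both the grading and the involution $\tau$, and hence gives $\oV(A) = \prod_i \oV(A^{(i)})$ with $\alpha = (\alpha^{(i)})$; a $\beta$ can be assembled from factor-wise $\beta^{(i)}$'s. Each factor $A^{(i)}$ inherits splitness from $A$, and it is also complex because any invertible element of $A_1$ projects to an invertible element of $(A^{(i)})_1$. Thus I may assume $A$ is simple, complex, and split.

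In this case Lemma \ref{simplerealcomplex} identifies $A$ with $\rk'' \times \rk''$, where $\rk''$ is a quadratic separable $\rk'$-algebra (a quadratic field extension of $\rk'$, or the split form $\rk' \times \rk'$) for some finite extension $\rk'/\rk$. The grading on $A$ is taken componentwise from the grading $\rk'' = \rk' \oplus (\rk'')_1$, the involution $\tau$ is the coordinate exchange of the two copies of $\rk''$, and the bar involution acts on each copy as the non-trivial $\rk'$-algebra automorphism of $\rk''$. Writing $\alpha = (a_1, a_2)$, the condition $\alpha \alpha^\tau = 1$ forces $a_2 = a_1^{-1}$, and $\alpha \bar\alpha = 1$ then says exactly that $N_{\rk''/\rk'}(a_1) = a_1 \bar a_1 = 1$.

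The key step is to invoke Hilbert's Theorem 90 for the quadratic \'etale algebra $\rk''/\rk'$ to produce some $b \in (\rk'')^\times$ with $a_1 = b \bar b^{-1}$; this applies uniformly whether $\rk''$ is a field or the split algebra (in the latter case $a_1 = (t, t^{-1})$ for some $t \in (\rk')^\times$ and one can take $b = (t,1)$). Setting $\beta := (b, b^{-1})$ then gives $\beta \beta^\tau = (b b^{-1}, b^{-1} b) = 1$ and $\beta \bar\beta^{-1} = (b \bar b^{-1}, b^{-1} \bar b) = (a_1, a_1^{-1}) = \alpha$, as required. The only non-formal content is Hilbert 90, so no real obstacle arises; the reduction and the verification are purely structural consequences of the lemmas in the preceding subsections.
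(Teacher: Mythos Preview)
Your proposal is correct and follows essentially the same route as the paper's proof: reduce to the case where $A$ is simple (complex, split), invoke the classification of Lemma~\ref{simplerealcomplex} to write $A=\rk''\times\rk''$, and then recognize the conditions on $\beta$ as an instance of Hilbert's Theorem~90 for the quadratic \'etale extension $\rk''/\rk'$. The paper compresses the last two steps into the single sentence ``the lemma is a reformulation of Hilbert Theorem 90,'' whereas you have helpfully unpacked the identification of $\tau$, $\bar{\ }$, and the norm condition; but there is no substantive difference in strategy.
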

\begin{proof}
Assume that $A$ is simple without loss of generality.  Write $A=\rk''\times \rk''$ as in Lemma \ref{simplerealcomplex}. Then the lemma is a reformulation of Hilbert Theorem 90.
\end{proof}

\begin{lemd}\label{betais} If $A$ is   complex and  $\beta$ is as in Lemma \ref{beta0}, then the map
\be\label{isog0}
\begin{array}{rcl}
 \breve \oH_{\alpha}(E)&\rightarrow &\breve \oG(E_0),\\
  (g,\delta)&\mapsto & \left\{
                         \begin{array}{ll}
                           (g|_{E_0}, 1), & \hbox{if $\delta=1$;}\smallskip \\
                           ((\beta g)|_{E_0}, -1), & \hbox{if $\delta=-1$}
                         \end{array}
                       \right.
  \end{array}
\ee
is a well-defined group isomorphism.
\end{lemd}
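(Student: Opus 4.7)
The plan is to verify by direct computation that the stated map is well-defined, bijective, and a homomorphism, exploiting repeatedly the identity $\alpha = \beta\bar\beta^{-1}$, commutativity of $A$, the multiplicativity of the involution $\bar{\ }$ on $\End(E_\rk)$, and the identification $E \cong A \otimes_{A_0} E_0$ provided by Lemma \ref{a0e} (valid because $A$ is complex). Throughout I will use that elements $(g,-1)$ of $\breve\oG(E)$ are anti-$A$-linear, i.e.~satisfy $g(au) = a^\tau g(u)$, which is forced by the sesquilinearity conventions $\la a u, v\ra_E = a \la u, v\ra_E$ and $\la u, a v\ra_E = a^\tau \la u, v\ra_E$ together with the relation $\la gu,gv\ra_E=\la v,u\ra_E$.

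First I would check well-definedness for the $\delta = -1$ branch. The key observation is the grading computation
\[
\overline{\beta g} \;=\; \bar\beta \cdot \bar g \;=\; \bar\beta \cdot \alpha g \;=\; \bar\beta \cdot \beta\bar\beta^{-1} g \;=\; \beta g,
\]
which shows $\beta g$ is a degree-zero operator and hence preserves $E_0$. Since $g$ is anti-$A$-linear and $\beta$ lies in the commutative algebra $A$, $\beta g$ is also anti-$A$-linear, so it restricts to an anti-$A_0$-linear automorphism of $E_0$. The Hermitian-reversal condition on $E_0$ is then verified by
\[
\la \beta g(u), \beta g(v)\ra_E \;=\; \beta\beta^\tau \la g(u), g(v)\ra_E \;=\; \la v, u\ra_E,
\]
using $\beta\beta^\tau = 1$ and the defining property of $g$. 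The $\delta = 1$ branch is immediate since $\bar g = g$ means $g \in \oG(E)$ preserves the grading and restricts to an element of $\oG(E_0)$.

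Bijectivity follows at once from the identification $E \cong A \otimes_{A_0} E_0$: any degree-zero $A$-linear or anti-$A$-linear operator on $E$ is uniquely determined by its restriction to $E_0$, giving injectivity on each branch; and conversely, any element of $\oG(E_0)$ extends $A$-linearly, while any anti-$A_0$-linear, form-reversing $h$ on $E_0$ extends anti-$A$-linearly to some $\tilde h$ on $E$, with $\beta^{-1}\tilde h$ serving as the required pre-image in the $\delta=-1$ part.

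Finally, multiplicativity is a case check over the four sign combinations. Three cases are routine, using $A$-linearity of the $\delta=1$ factors together with commutativity of $A$; the only non-routine case is $(-1,-1)$, where anti-$A$-linearity of $g_1$ gives the commutation $g_1 \beta = \beta^\tau g_1$, so that
\[
(\beta g_1)(\beta g_2) \;=\; \beta \cdot (g_1 \beta) \cdot g_2 \;=\; \beta\beta^\tau \cdot g_1 g_2 \;=\; g_1 g_2,
\]
matching the product $(g_1 g_2, 1)$ under the map. I expect the main subtlety throughout to be the careful bookkeeping of $A$-linearity versus anti-$A$-linearity and of operator parity; once the anti-$A$-linearity of $\delta=-1$ elements is isolated, every step reduces to formal manipulation with $\alpha$, $\beta$, $\tau$, and $\bar{\ }$.
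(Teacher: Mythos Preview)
Your proof is correct and follows essentially the same approach as the paper. The only organizational difference is that the paper first factors the map as $\breve\oH_\alpha(E)\xrightarrow{\sim}\breve\oH_1(E)\xrightarrow{\sim}\breve\oG(E_0)$, where the first arrow is $(g,\delta)\mapsto(g,1)$ or $(\beta g,-1)$; this reduces the homomorphism and bijectivity checks to the case $\alpha=\beta=1$, where the inverse is written explicitly as $(g,\delta)\mapsto(1_A\otimes g,1)$ or $(\tau\otimes g,-1)$---which is exactly your ``anti-$A$-linear extension $\tilde h$'' followed by multiplication by $\beta^{-1}$.
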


\begin{proof}
Note that $1\in \oV(A)$, and the map
\bee\label{isog2}
\begin{array}{rcl}
 \breve \oH_{\alpha}(E)&\rightarrow &\breve \oH_1(E),\\
  (g,\delta)&\mapsto & \left\{
                         \begin{array}{ll}
                           (g, 1), & \hbox{if $\delta=1$;}\smallskip \\
                           (\beta g, -1), & \hbox{if $\delta=-1$}
                         \end{array}
                       \right.
  \end{array}
\eee
is a well-defined group isomorphism. Therefore, in order to prove the lemma, we may (and do) assume that $\alpha=\beta=1$. Then it is clear that \eqref{isog0} is a group homomorphism. It is bijective since it has an inverse map
\bee\label{isog2}
\begin{array}{rcl}
 \breve \oG(E_0)&\rightarrow &\breve \oH_1(E),\\
  (g,\delta)&\mapsto & \left\{
                         \begin{array}{ll}
                           (1_A\otimes g, 1), & \hbox{if $\delta=1$;}\smallskip \\
                           (\tau\otimes g, -1), & \hbox{if $\delta=-1$.}
                         \end{array}
                       \right.
  \end{array}
\eee
\end{proof}

If $A=\rk'\times \rk'$ is real and simple as in Lemma \ref{simplerealcomplex}, then
\be\label{ohe}
  \oH(E)=\oG(E_0)\times \oG(E_1)\cong \GL_n(\rk')\times \GL_n(\rk')\quad (\textrm{see \eqref{gln}}),
\ee
  where $n:=\rank_A(E_0)=\rank_A(E_1)$. Moreover,
 \begin{equation}\label{bgc}
 \breve \oH_1(E)=\breve \oG(E_0)\times_{\{\pm 1\}} \breve \oG(E_1)\quad(\textrm{the fiber product}),
\end{equation}
and
  \begin{equation}\label{gm1}
\breve \oH_{-1}(E) \cong \{\pm 1\}\ltimes (\GL_n(\rk')\times \GL_n(\rk')),
 \end{equation}
                 where  the semidirect product is defined by the action
              \[
                (-1). (g_1, g_2)=(g_2^{-t}, g_1^{-t}), \quad g_1, g_2\in \GL_n(\rk_0).
              \]

              \begin{lemd}\label{realm1}
Assume that $A$ is real. Then up to conjugation by $\oH(E)\subset \breve \oH_{-1}(E)$,  there exists a unique element of order $2$ in $ \breve \oH_{-1}(E)\setminus  \oH(E)$.
\end{lemd}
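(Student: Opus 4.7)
The plan is to reduce to the case where $A$ is simple and then carry out a direct computation in the explicit model provided by \eqref{gm1}. First, I would decompose $A = \prod_i A^{(i)}$ into simple real split graded involutive algebras; by Lemma \ref{simplerealcomplex} each $A^{(i)}$ has the form $\rk^{(i)} \times \rk^{(i)}$ for a finite extension $\rk^{(i)}/\rk$, and $E$ decomposes correspondingly as $E = \prod_i E^{(i)}$ with $E^{(i)} := A^{(i)} \otimes_A E$. This yields $\oH(E) = \prod_i \oH(E^{(i)})$ and a natural bijection
\[
  \breve\oH_{-1}(E) \setminus \oH(E) \;\longleftrightarrow\; \prod_i \bigl(\breve\oH_{-1}(E^{(i)}) \setminus \oH(E^{(i)})\bigr),\quad (g,-1)\mapsto ((g^{(i)},-1))_i.
\]
Squaring in $\breve\oH_{-1}(E)$ takes values in $\oH(E)$ and decomposes componentwise, and the $\oH(E)$-conjugation action is also componentwise; hence it will suffice to prove the lemma when $A$ is simple.

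Assume therefore $A = \rk' \times \rk'$. Using \eqref{gm1}, I would write any element of $\breve\oH_{-1}(E) \setminus \oH(E)$ as $(-1, (g_1, g_2))$ with $g_1, g_2 \in \GL_n(\rk')$. A short calculation in the semidirect product, using $(-1).(g_1, g_2) = (g_2^{-t}, g_1^{-t})$, gives
\[
  (-1, (g_1, g_2))^2 \;=\; (1, (g_1 g_2^{-t}, g_2 g_1^{-t})),
\]
which equals the identity if and only if $g_2 = g_1^t$. Thus the order $2$ elements of $\breve\oH_{-1}(E) \setminus \oH(E)$ are precisely $(-1, (g_1, g_1^t))$ for $g_1 \in \GL_n(\rk')$, and the choice $g_1 = 1$ establishes existence.

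For uniqueness up to conjugation, a similar computation would yield
\[
  (1,(h_1,h_2))\cdot(-1,(g_1,g_1^t))\cdot(1,(h_1,h_2))^{-1} \;=\; \bigl(-1,\; (h_1 g_1 h_2^t,\,(h_1 g_1 h_2^t)^t)\bigr),
\]
so that $\oH(E) = \GL_n(\rk') \times \GL_n(\rk')$ acts on the parameter $g_1$ via $(h_1,h_2).g_1 = h_1 g_1 h_2^t$. This action is transitive on $\GL_n(\rk')$ (take $h_2 = 1$ and $h_1 = g_1^{-1}$ to send any $g_1$ to $1$), which gives the claimed uniqueness.

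I do not anticipate a real obstacle here: once \eqref{gm1} is invoked, the argument reduces to two short matrix identities. The only point requiring care is the bookkeeping in the reduction step, namely that the single sign $\delta \in \{\pm 1\}$ in $\breve\oH_{-1}(E) \subset \GL(E_\rk)\times\{\pm 1\}$ is compatible with the product decomposition; this is immediate since the defining conditions $\bar g = -g$ and $\la g.u,g.v\ra_E = \la v,u\ra_E$ both respect the decomposition $E = \prod_i E^{(i)}$.
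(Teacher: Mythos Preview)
Your proposal is correct and follows exactly the approach indicated in the paper: reduce to simple $A$ and read off the result from the explicit description \eqref{gm1}. The paper merely asserts that the lemma ``easily follows'' from \eqref{gm1}; your computations of $(-1,(g_1,g_2))^2$ and of the $\oH(E)$-conjugation action are precisely the details being omitted.
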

\begin{proof}
Without loss of generality assume that $A$ is simple. Then the lemma easily follows by the isomorhhism \eqref{gm1}.
\end{proof}

 Note that if $A$ is real and simple, then $\alpha=\pm 1$. Combining \eqref{bgc},  Lemma \ref{betais}  and Lemma \ref{realm1},
 we  obtain the following result.
   \begin{prpd}\label{mvwg} The group $\breve\oH_\alpha(E)$ contains $\oH(E)$ as a subgroup of index $2$.
\end{prpd}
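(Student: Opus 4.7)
The plan is to prove that $\oH(E)$ is a \emph{proper} subgroup of $\breve\oH_\alpha(E)$; combined with the already-established bound that the index is at most $2$, this forces the index to equal $2$. The strategy is to reduce to the case where $A$ is simple, and then to treat the real and complex cases separately using the machinery already assembled in this section.

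For the reduction, decompose $A = \prod_{i} A^{(i)}$ into simple graded involutive quotients, which induces $E = \prod_i E^{(i)}$ with each $E^{(i)}$ a complex graded Hermitian $A^{(i)}$-module. A key observation is that any $(g,\delta) \in \breve\oG(E)$ commutes with the primitive central idempotents of $A$ (which are $\tau$-fixed): for $\delta = 1$ this is $A$-linearity, for $\delta = -1$ the form condition forces $g$ to be $\tau$-semilinear, which also suffices. Hence $g$ preserves each factor $E^{(i)}$, and writing $\alpha = (\alpha^{(i)})$ with $\alpha^{(i)} \in \oV(A^{(i)})$ one obtains the fiber-product identifications
\[
\oH(E) = \prod_i \oH(E^{(i)}), \qquad \breve\oH_\alpha(E) \;=\; \prod_i \breve\oH_{\alpha^{(i)}}(E^{(i)}) \;\underset{\prod\{\pm 1\}}{\times}\; \{\pm 1\},
\]
where the last fiber product is taken with respect to the common sign $\delta$. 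Consequently, producing an element of $\breve\oH_\alpha(E) \setminus \oH(E)$ amounts to producing, for each $i$, an element of $\breve\oH_{\alpha^{(i)}}(E^{(i)}) \setminus \oH(E^{(i)})$, which reduces the proposition to the case where $A$ is simple.

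Assume now that $A$ is simple; by Lemma \ref{inva1} it is either real or complex. If $A$ is complex, Lemma \ref{betais} yields an isomorphism $\breve\oH_\alpha(E) \cong \breve\oG(E_0)$ under which $\oH(E)$ is identified with $\oG(E_0)$: indeed the restriction $g \mapsto g|_{E_0}$ is bijective, with inverse given by extending $A$-linearly via the identification $E \cong A \otimes_{A_0} E_0$ of Lemma \ref{a0e} (the extended map preserves $\la\,,\,\ra_E$ because $\la a \otimes u_0, b \otimes v_0\ra_E = a b^\tau \la u_0, v_0\ra_E$). The standard fact that the MVW extension $\breve\oG(E_0)$ properly contains $\oG(E_0)$ then settles this case. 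If $A$ is real, then by Lemma \ref{simplerealcomplex} we have $A \cong \rk' \times \rk'$ and $\alpha \in \{\pm 1\}$: for $\alpha = 1$, the description \eqref{bgc} allows one to choose any pair $(g_0, -1), (g_1, -1)$ with $g_i \in \breve\oG(E_i) \setminus \oG(E_i)$, yielding an element of $\breve\oH_1(E)$ with $\delta = -1$; for $\alpha = -1$, Lemma \ref{realm1} directly exhibits an order-$2$ element of $\breve\oH_{-1}(E) \setminus \oH(E)$. The main technical point to verify is the fiber-product decomposition in the reduction step, which rests on the $\tau$-semilinearity of $\delta = -1$ elements; once this is in place, the simple cases close out by invoking the explicit descriptions already established.
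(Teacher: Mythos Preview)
Your proof is correct and follows essentially the same route as the paper, which simply writes ``Combining \eqref{bgc}, Lemma \ref{betais} and Lemma \ref{realm1}'' before stating the proposition: reduce to the simple case, then invoke Lemma \ref{betais} in the complex case and \eqref{bgc}/\eqref{gm1} (via Lemma \ref{realm1}) in the two real cases $\alpha=\pm 1$. You have made the reduction step (the fiber-product decomposition over the common sign $\delta$, using $\tau$-semilinearity for $\delta=-1$) more explicit than the paper does, but the argument is the same in substance.
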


\subsection{Some characters}\label{secach}

If $A$ is real and simple, then $\oH(\oE)=\oG(E_0)\times \oG(E_1)$, which is the product of two copies of a general linear group as in \eqref{ohe}. We thus define the notion of good characters of $\oH(E)$ as in the Introduction.
In general, we make the following definition.

\begin{dfn}\label{rele1} A character of $\oH(E)$ is said to be good if its restriction to $\oH(A'\otimes_A E)$ is good, for all real simple graded involutive quotient $A'$ of $A$ . \end{dfn}

Let $\alpha\in \oV(A)$ be as before.
\begin{lemd} \label{singlex}
The set
\begin{equation}\label{x}
 \{x\in \oG(E)\mid x=\alpha \bar x\}
\end{equation}
is a single left $\oH(E)$-coset as well as a single right $\oH(E)$-coset.

\end{lemd}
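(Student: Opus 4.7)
The set $X := \{x \in \oG(E) \mid x = \alpha\bar x\}$ is visibly stable under two-sided multiplication by $\oH(E)$: for $h \in \oH(E)$ (so $\bar h = h$) and $x \in X$ (so $\bar x = \alpha^{-1}x$), the fact that $\alpha \in A$ commutes with every $A$-linear endomorphism of $E$ yields $\overline{hx} = \alpha^{-1}(hx)$ and $\overline{xh} = \alpha^{-1}(xh)$, whence $hx, xh \in X$. Moreover, for any $x, y \in X$ the same centrality of $\alpha$ gives
\[
  \overline{yx^{-1}} = (\alpha^{-1}y)(\alpha^{-1}x)^{-1} = \alpha^{-1}(yx^{-1})\alpha = yx^{-1},
\]
and symmetrically $\overline{x^{-1}y} = x^{-1}y$, so both $yx^{-1}$ and $x^{-1}y$ lie in $\oH(E)$. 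Thus as soon as $X$ is non-empty, $X = \oH(E)\cdot x = x\cdot\oH(E)$ for any $x \in X$, and the lemma reduces to exhibiting a single element of $X$.

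I would attack non-emptiness by decomposing $A$ into its simple graded involutive factors, which induces matching product decompositions of $E$, $\oG(E)$, $\oH(E)$, $\alpha$, and $X$, reducing to the case when $A$ is simple. By Lemma \ref{inva1}, $A$ is then either complex or real. In the complex case, Lemma \ref{beta0} supplies $\beta \in A^\times$ with $\beta\beta^\tau = 1$ and $\beta\bar\beta^{-1} = \alpha$; viewed as scalar multiplication on $E$, the first relation places $\beta$ in $\oG(E)$ and the second yields $\alpha\bar\beta = \beta$, so $\beta \in X$. In the real case, Lemma \ref{simplerealcomplex} forces $A = A_0 = \rk'\times\rk'$ with coordinate-exchange involution, and the relations defining $\oV(A)$ collapse to $\alpha \in \{\pm 1\}$: the case $\alpha = 1$ is immediate since $X = \oH(E) \ni 1$, leaving only $\alpha = -1$, in which $x = -\bar x$ forces $x$ to be of pure degree $1$ and to swap $E_0$ with $E_1$ isometrically.

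The decisive step, and the only one I expect to require genuine work, is producing such an isometric swap when $A$ is real simple and $\alpha = -1$. Complexity of $E$ provides an $A_0$-module isomorphism $E_0 \cong E_1$; the task is to promote it to an isometry of Hermitian $A_0$-modules. I would do this via the classification of non-degenerate Hermitian modules over $A_0 = \rk'\times\rk'$ with exchange involution: under the primitive idempotents $e_1, e_2 \in A_0$, any such module splits as $e_1 E_0 \oplus e_2 E_0$, and the $\tau$-equivariance of the form forces $\langle e_i E_0, e_i E_0\rangle = 0$, so the Hermitian structure collapses to a perfect $\rk'$-pairing between the two summands. Consequently the isometry class is determined by the common $\rk'$-dimension, which matches the $A_0$-rank; complexity ensures $E_0$ and $E_1$ share this rank, so an isometry $\phi\colon E_0 \to E_1$ exists. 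The operator $x := \phi \oplus \phi^{-1}$ then lies in $\oG(E)$, has pure degree $1$, and therefore belongs to $X$, completing the proof.
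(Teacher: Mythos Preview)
Your proof is correct and follows essentially the same route as the paper: verify transitivity of the $\oH(E)$-translations directly, reduce to simple $A$, invoke Lemma~\ref{beta0} in the complex case, and handle the real case separately. The only difference is that you spell out the real $\alpha=-1$ case via the rank classification of Hermitian $(\rk'\times\rk')$-modules to build an explicit swap $\phi\oplus\phi^{-1}$, whereas the paper simply declares this case ``obvious''.
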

\begin{proof}
It is routine  to check that  the left translation (and the right translation) of $\oH(E)$ on  the set \eqref{x} is transitive. Thus it remains to show that this set is non-empty. Without loss of generality assume that $A$ is simple. If $A$ is complex, then a scalar multiplication provided by Lemma \ref{beta0} is an element of  the set  \eqref{x}. The case when $A$ is real is obvious.
\end{proof}

With Lemma \ref{singlex} in mind, we make the following definition.
\begin{dfn}\label{rele1}
 A character $\breve \chi$ of $\breve \oH_{\alpha}(E)$ is said to be linearly good if there is a good character $\chi$ of $\oH(E)$ such that for some (and hence all) $x$ in the set \eqref{x},
 \begin{equation}\label{deflg}
  \breve \chi(g)=\chi(xgx^{-1}) \chi(g^{-1})\quad \textrm{for all }g\in \oH(E).
\end{equation}
\end{dfn}

As in the proof of Lemma \ref{cimpliesc}, write
\begin{equation}\label{deca}
A=A'\times A^+\times A^-
\end{equation}
 as a product of graded involutive algebras such that $A'$ is complex, $A_+$ and $A_-$ are real, and
the image of $\alpha$ under the projection map $A\rightarrow A^\pm$  is $\pm 1$. Then
\begin{equation}\label{dece}
  E=E'\times E^+\times E^-,
\end{equation}
where $ E'=A'\otimes_A E $ is a graded Hermitian $A'$-module, and $ E^\pm=A^\pm \otimes_A E $ is a graded Hermitian $A^\pm$-module.

\begin{lemd}\label{good1}
Every linearly good character of $\breve \oH_{\alpha}(E)$ has trivial restriction to $\oH(E')\times \oH(E^+)$.
\end{lemd}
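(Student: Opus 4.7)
The plan is to reduce the lemma to componentwise checks on the decompositions \eqref{deca} and \eqref{dece}, and to exploit the ``some and hence all'' freedom in \eqref{deflg} so as to choose an auxiliary $x$ with particularly convenient components on the complex factor $A'$ and on the real factor $A^+$ (where the image of $\alpha$ is $1$).

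First, I would write $\oH(E) = \oH(E') \times \oH(E^+) \times \oH(E^-)$, decompose the good character accordingly as $\chi = \chi' \otimes \chi^+ \otimes \chi^-$, and note that any $x$ in \eqref{x} decomposes as $x = (x', x^+, x^-)$ with each component lying in the analogous set for the corresponding factor. I would then apply Lemma \ref{beta0} to the complex factor $A'$ to obtain $\beta' \in (A')^\times$ with $\beta' (\beta')^\tau = 1$ and $\beta' \overline{\beta'}^{-1} = \alpha'$, and set $x' := \beta' \cdot \mathrm{id}_{E'}$. The first identity places $x'$ in $\oG(E')$, and the second yields $x' = \alpha' \overline{x'}$. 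On the $A^+$ component, any element of the analogous set for $A^+$ automatically satisfies $\overline{x^+} = x^+$ (because $\alpha^+ = 1$), hence lies in $\oH(E^+)$. For the $A^-$ component, any $x^-$ from the corresponding set will do. By Lemma \ref{singlex} the set \eqref{x} is a single left $\oH(E)$-coset, and together with the multiplicativity and inner-invariance of $\chi$ this makes the right-hand side of \eqref{deflg} independent of the choice of $x$, so the identity holds with the chosen $x = (x', x^+, x^-)$.

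Second, I would evaluate on $g = (g', g^+, 1) \in \oH(E') \times \oH(E^+)$. Then $xgx^{-1} = (x' g' (x')^{-1},\, x^+ g^+ (x^+)^{-1},\, 1)$. Because $x' = \beta' \cdot \mathrm{id}_{E'}$ is a scalar in $A'$ and $g'$ is $A'$-linear, $x' g' (x')^{-1} = g'$; and because $x^+ \in \oH(E^+)$ and $\chi^+$ is a character, $\chi^+(x^+ g^+ (x^+)^{-1}) = \chi^+(g^+)$. Combining these gives $\chi(xgx^{-1}) = \chi(g)$, whence $\breve\chi(g) = \chi(xgx^{-1}) \chi(g^{-1}) = 1$, as required.

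The argument is bookkeeping once the decomposition is in place; the substantive ingredient is the scalar-in-$A'$ trick, which crucially uses that $A'$ is complex via Lemma \ref{beta0}. I do not expect any genuine obstacle here — and in particular the goodness of $\chi$ plays no role in this lemma, only the product structure \eqref{deca} of $\alpha$ — so I expect goodness to become essential only in the subsequent analysis of $\breve\chi$ on the $E^-$ component or on the non-trivial coset of $\oH(E)$ in $\breve\oH_\alpha(E)$.
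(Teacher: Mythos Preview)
Your proof is correct and follows essentially the same approach as the paper. The paper's proof is a one-liner: it invokes Lemma~\ref{beta0} to arrange that $x$ restricts to a scalar multiplication on $E'$, and then says the lemma ``easily follows''; you have simply unpacked that ``easily follows'' by treating the $E^+$ component explicitly (using that $\alpha^+=1$ forces $x^+\in\oH(E^+)$, so conjugation by $x^+$ is invisible to the character $\chi^+$). Your closing remark that goodness of $\chi$ is not actually used here is also accurate.
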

\begin{proof}
Using Lemma \ref{beta0}, we assume that the element $x$ in \eqref{deflg} is   a scalar multiplication when restricted to $E'$. Then the lemma easily follows.
\end{proof}

 \begin{lemd}\label{good2}
If $A=\rk'\times \rk'$ is simple and real and $\breve \oH_{-1}(E)$ is realized as in \eqref{gm1}, then a character of  $\breve \oH_{-1}(E)$ is linearly good if and only if its restriction to $ \oH(E)$ has the form $\gamma\otimes \gamma^{-1}$, where $\gamma$ is a good character of $\GL_n(\rk')$.
\end{lemd}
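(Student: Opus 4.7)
The plan is to unpack Definition \ref{rele1} in the concrete realization \eqref{ohe}--\eqref{gm1} and verify the equivalence by a direct computation. First, the set \eqref{x} for $\alpha=-1$ consists of $x\in \oG(E)$ satisfying $\bar x=-x$, i.e., those $x$ that swap $E_0$ and $E_1$. Such an $x$ exists: by Lemma \ref{simplerealcomplex} and the complexness of $E$, one has $E_0\cong E_1$ as Hermitian $A$-modules, and an isometry in each direction assembles into the required $x$. Writing $x$ in block form with off-diagonal components $a\colon E_1\to E_0$ and $b\colon E_0\to E_1$ ($A$-linear isometries), a short block-matrix calculation gives
$$xgx^{-1}=(a g_2 a^{-1},\, b g_1 b^{-1})\in \oG(E_0)\times \oG(E_1)$$
for every $g=(g_1,g_2)\in \oH(E)$.

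Next, since $\GL_n(\rk')^{\mathrm{ab}}\cong \rk'^\times$ via the determinant, every character of $\oG(E_i)\cong \GL_n(\rk')$ factors through $\det$. Writing $\chi=\gamma_0\otimes\gamma_1$ with $\gamma_i=\eta_i\circ\det$ and noting that the $\rk'$-linear isomorphisms $V_0\cong V_1$ induced by $a,b$ preserve $\det$ (and hence identify characters of $\oG(E_0)$ with those of $\oG(E_1)$), the display above yields $\chi(xgx^{-1})=\gamma_0(g_2)\gamma_1(g_1)$. Substituting into \eqref{deflg} and grouping by $g_1,g_2$ gives
$$\breve\chi|_{\oH(E)}(g_1,g_2)=(\gamma_1\gamma_0^{-1})(g_1)\cdot(\gamma_0\gamma_1^{-1})(g_2)=\gamma(g_1)\,\gamma^{-1}(g_2),$$
where $\gamma:=\gamma_1\gamma_0^{-1}$. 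Since $\chi$ is good exactly when $\gamma_0\gamma_1^{-1}=\gamma^{-1}$ is good, and goodness is preserved under inversion, $\chi$ is good iff $\gamma$ is good. This gives the forward implication. For the converse, given a good $\gamma$, the choice $\chi:=1\otimes\gamma$ is good (its ratio $\gamma_0\gamma_1^{-1}=\gamma^{-1}$ is good) and produces the restriction $\gamma\otimes\gamma^{-1}$.

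The only step requiring more than formal manipulation is the identity $\chi(xgx^{-1})=\gamma_0(g_2)\gamma_1(g_1)$: it hinges on the fact that every character of $\GL_n(\rk')$ factors through $\det$, so that conjugation by the $\rk'$-linear isomorphisms induced by $a,b$ across the two copies of $\GL_n(\rk')$ does not affect character values. Everything else is a direct unwinding of definitions; note also that the lemma concerns only $\breve\chi|_{\oH(E)}$ (consistent with Definition \ref{rele1}), so no condition on the nontrivial coset of $\oH(E)$ in $\breve\oH_{-1}(E)$ is involved.
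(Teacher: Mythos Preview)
Your proof is correct. The paper itself omits the argument entirely, stating only ``This is elementary and we omit the details,'' so your write-up supplies precisely the straightforward unwinding of Definition~\ref{rele1} that the authors leave to the reader. One small remark: the existence of an $x$ in the set \eqref{x} is already guaranteed by Lemma~\ref{singlex}, so you could cite that instead of arguing directly via isometries; either way the computation $xgx^{-1}=(ag_2a^{-1},bg_1b^{-1})$ and the observation that characters of $\GL_n(\rk')$ are conjugation-invariant are the heart of the matter, and you handle them correctly.
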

\begin{proof}
This is elementary and we omit the details.
\end{proof}

\begin{dfn}\label{rele2}
A character $\breve \chi$ of $\breve\oH_\alpha(E)$ is said to be linearly  relevant if $\breve \chi(g)=-1$ for every element $g\in \breve \oH_{\alpha}(E)\setminus  \oH(E)$ whose image under the  obvious homomorphism $ \breve\oH_{\alpha}(E)\rightarrow  \breve\oH_{-1}(E^-)$ has order $2$.

\end{dfn}

Note that every linearly relevant character of $\breve\oH_\alpha(E)$ also has trivial restriction to $\oH(E')\times \oH(E^+)$.

In this subsection, let $s$ be a semisimple element of $\v(E)$. Write $\alpha_s$ for the image of $\alpha$ under the
natural embedding $A\hookrightarrow A_s$. Note that $\alpha_s\in \oV(A_s)$ and  $\breve{\oH}_{\alpha_s}(E_s)$ is a subgroup of  $\breve{\oH}_{\alpha}(E)$.

\begin{lemd}\label{stagp1}
Every linearly good character  of  $\breve{\oH}_{\alpha}(E)$ restricts to a linearly good  character of  $\breve{\oH}_{\alpha_s}(E_s)$, and
every linearly relevant character  of  $\breve{\oH}_{\alpha}(E)$ restricts to a linearly relevant character of  $\breve{\oH}_{\alpha_s}(E_s)$.
\end{lemd}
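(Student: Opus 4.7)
The plan is to handle the two assertions separately, each by reducing to the case where $A$ is simple and invoking the structural lemmas from this section.

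For the linearly good assertion, fix data $(\chi, x)$ realising linear goodness of $\breve\chi$. By Lemma~\ref{singlex} applied to the pair $(A_s, E_s)$---which is split by Lemma~\ref{sis} and for which $E_s$ is complex by Lemma~\ref{hcc}---one may choose some $x_s$ in the nonempty set $\{y \in \oG(E_s) \mid y = \alpha_s \bar y\}$. Since $\alpha_s = \alpha$ under $A \hookrightarrow A_s$ and $\oG(E_s) \subset \oG(E)$, the same $x_s$ also lies in $\{y \in \oG(E) \mid y = \alpha \bar y\}$, so Lemma~\ref{singlex} yields $h_0 \in \oH(E)$ with $x_s = h_0 x$. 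The conjugation invariance of the character $\chi$ then gives $\chi(x_s g x_s^{-1}) = \chi(x g x^{-1})$ for all $g \in \oH(E_s)$, so the restriction $\breve\chi|_{\oH(E_s)}$ is realised by the formula of Definition~\ref{rele1} with data $(\chi|_{\oH(E_s)}, x_s)$. To check that $\chi|_{\oH(E_s)}$ is good: by Lemmas~\ref{cac},~\ref{as},~\ref{as2}, every real simple graded involutive quotient of $A_s$ arises as the ``fixed piece'' of some real simple quotient of $A$, with the corresponding graded Hermitian submodule of $E$ of smaller or equal rank; the condition that $\eta^{2r}|\cdot|^{-m}$ be not pseudo-algebraic is monotone in the rank (the ranges of $r, m$ only shrink), so goodness is inherited.

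For the linearly relevant assertion, take $g_s \in \breve\oH_{\alpha_s}(E_s) \setminus \oH(E_s)$ with order-$2$ image in $\breve\oH_{-1}((E_s)^-)$, and write $E^-_{s^-} = (E_s)^- \oplus \tilde{E}^-_s$ as an orthogonal direct sum of graded Hermitian $A^-$-modules, with $\tilde{E}^-_s$ the ``complex part'' coming from $(A^-_{s^-})_{\mathrm{complex}}$ (Lemma~\ref{as}); orthogonality uses the $\tau$-invariance of the idempotents separating the real and complex simple factors of $A^-_{s^-}$. Only the $(E_s)^-$-summand of $g_s|_{E^-}$ is known a priori to be an involution. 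The strategy is to construct $h \in \oH(\tilde{E}^-_s)$, extended by the identity to $E_s$, so that $(g_s h)|_{\tilde{E}^-_s}$ also has order $2$; then $(g_s h)|_{E^-}$ has order $2$, and linear relevance of $\breve\chi$ on $\breve\oH_\alpha(E)$ yields $\breve\chi(g_s h) = -1$. Existence of such an $h$ is straightforward: under the isomorphism $\breve\oH_{-1}(\tilde{E}^-_s) \cong \{\pm 1\} \ltimes \GL_N(\rk'')$ of Lemma~\ref{betais} (for suitable $N$ and $\rk''$), order-$2$ elements in the $\delta = -1$ coset correspond exactly to symmetric matrices, and one may right-translate any element of that coset to such an involution by an appropriate element of $\GL_N(\rk'')$, which then lifts to the desired $h$.

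It then remains to show $\breve\chi(h) = 1$. By Lemma~\ref{good1}, $\breve\chi|_{\oH(E)}$ is trivial on $\oH(E') \times \oH(E^+)$; and by Lemma~\ref{good2} applied to each real simple factor $A^-_i$ of $A^-$, its further restriction to $\oH(E^-_i) \cong \GL_{n_i}(\rk'_i) \times \GL_{n_i}(\rk'_i)$ takes the form $(\eta_i \circ \det) \otimes (\eta_i \circ \det)^{-1}$ for a good character $\eta_i$ of $\rk_i^{\prime\times}$. The $A^-_{s^-}$-linearity of $h$, together with the existence of an invertible $A^-$-linear element $a \in (A^-_{s^-})_{\mathrm{complex}, 1}$ giving a grading-exchanging isomorphism of $\tilde{E}^-_s$ and hence forcing $h_1 = a h_0 a^{-1}$, implies that $\det h_0 = \det h_1$ in each $\GL_{n_i}(\rk'_i)$; therefore $\breve\chi(h) = \prod_i \eta_i(1) = 1$, and $\breve\chi(g_s) = -1$ follows. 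The main obstacle is this last identity of determinants, which requires careful tracking of how the $A_s$-module structure of $\tilde{E}^-_s$ constrains the two graded components $h_0, h_1$ via the $A^-$-linear element $a$, and hence how the two decompositions of $A_s$ (by simple factors of $A$ versus by the sign of $\alpha_s$) interact.
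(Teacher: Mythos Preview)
Your argument for the first assertion is essentially an unpacking of the paper's one-line proof (``obvious since every good character of $\oH(E)$ restricts to a good character of $\oH(E_s)$''), and is correct; note only that Lemma~\ref{as2} treats $s\in\oV(E)$, whereas here $s\in\v(E)$, so only Lemma~\ref{as} is needed.

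For the second assertion the paper simply writes down the commutative diagram
\[
\begin{CD}
\breve{\oH}_{\alpha}(E) @>>> \breve{\oH}_{-1}(E^-)\\
@AAA @AAA \\
\breve{\oH}_{\alpha_s}(E_s) @>>> \breve{\oH}_{-1}((E^-)_{s^-})
\end{CD}
\]
and says the result ``easily follows''.  Your construction of an auxiliary $h\in\oH(\tilde E^-_s)$ so that $(g_sh)|_{E^-}$ has order~$2$ is a legitimate and more explicit way to make this precise: the diagram alone does not finish the job, because its lower-right corner is $\breve\oH_{-1}((E^-)_{s^-})$ rather than $\breve\oH_{-1}((E_s)^-)$, and the complex summand $\tilde E^-_s$ lies exactly between them.

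There is, however, a real gap in your step~5.  You invoke Lemma~\ref{good1} and Lemma~\ref{good2}, both of which concern \emph{linearly good} characters, while the assertion you are proving assumes only that $\breve\chi$ is \emph{linearly relevant}.  What you actually need is that $\breve\chi|_{\oH(E^-_i)}$ has the form $\gamma_i\otimes\gamma_i^{-1}$; this is true for \emph{any} character of $\breve\oH_\alpha(E)$, with no relevance or goodness hypothesis, simply because such a character is invariant under conjugation by elements of the $\delta=-1$ coset, and in the description~\eqref{gm1} this conjugation sends $(g_1,g_2)\mapsto(g_2^{-t},g_1^{-t})$.  Once this is observed, your determinant identity $\det h_0=\det h_1$ (from the existence of an invertible $A^-$-linear $a$ in degree~$1$ of the complex factor) gives $\breve\chi(h)=1$ and the argument closes.
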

\begin{proof}
The first assertion is obvious since every good character of $\oH(E)$ restricts to a good character of $\oH(E_s)$.
Note  that the  decomposition  \eqref{dece} is $A_s$-stable, and
\[
A_s=(A')_{s'}\times (A^+)_{s^+}\times (A^-)_{s^-},
\]
where $s'\in \v(E')$ is the restriction of $s$ to $E'$, and  $s^\pm \in \v(E^\pm )$ is the restriction of $s$ to $E^\pm$.  The second assertion of the  lemma then easily follows by the commutative diagram
 \[
 \begin{CD}
         \breve{\oH}_{\alpha}(E) @>>> \breve{\oH}_{-1}(E^-)\\
            @AA A           @AAA \\
         \breve{\oH}_{\alpha_s}(E_s)@>> >\breve{\oH}_{-1}((E^-)_{s^-}). \\
  \end{CD}
\]

\end{proof}

\subsection{Some characters on a doubling group}
We  form the semi-direct product
\begin{equation*}\label{semi1}
\breve \BG(E):=  \{\pm 1\}\ltimes(\breve{\oG}(E)\times\breve{\oG}(E))
\end{equation*}
by letting $\{\pm 1\}$ act on $\breve{\oG}(E)\times \breve{\oG}(E)$
as
\[
  (-1).(\breve g,\breve h):=(\breve h,\breve g),\qquad \breve g, \breve h\in \breve{\oG}(E).
\]
Set $\breve\oH(E):=\breve\oH_1(E)$ and consider the fiber product
\[
 \breve{\BH}(E):=\{\pm 1\}\ltimes_{\{\pm 1\}}(\breve{\oH}(E)\times _{\{\pm 1\}}
\breve{\oH}(E))=\{(\delta,g,h)\mid (g,\delta), (h,\delta)\in\breve{\oH}(E)\}.
\]
It is a subgroup of $\breve \BG(E)$, and  contains $\oH(E)\times
\oH(E)$ as a subgroup of index two.

Parallel to Definition \ref{rele1}, we make the following definition.
\begin{dfn}\label{rele3}
A character of $\breve{\BH}(E)$ is said to be doubly  good   if its restriction to $\oH(E)\times \oH(E)$ equals $\chi\otimes \chi^{-1}$ for some good character $\chi$ of $\oH(E)$.
\end{dfn}

Parallel to Definition \ref{rele2}, we make the following definition.
\begin{dfn}\label{rele4}
A character $\breve \xi$ of $\breve{\BH}(E)$ is said to be doubly relevant  if
\[
  \breve \xi(\delta, g,g)=\delta\quad \textrm{for all }(g, \delta)\in \breve{\oH}(E).
\]
\end{dfn}

Let $x$ be an element of $\oG(E)$ which is normal in the sense of \cite{AG}, namely, $x \bar x=\bar x x$. In this subsection, put
\begin{equation*}\label{vE0}
  s:= x \bar{x}^{-1}=\bar{x}^{-1} x\in \oV(E),
\end{equation*}
and assume it is semisimple as a $\rk$-linear operator on $E$. Note that $s\in \oV(A_s)$.
Define a map
\begin{equation}\label{adx}
\begin{array}{rcl}
  \jmath_x: \breve \oH_s(E_s)&\rightarrow& \breve \BH(E),\\
   (g,\delta)&\mapsto&
   \left\{\begin{array}{ll}
     (1, xgx^{-1}, g),\quad &\textrm{if $\delta=1$};\smallskip\\
     (-1, gx^{-1}, xg),\quad &\textrm{if $\delta=-1$}.\\
    \end{array}\right.
    \end{array}
\end{equation}
This is a well-defined group homomorphism.

We prove the following proposition in the rest of this subsection.
\begin{prpd}\label{pad0}
Let $\breve \xi$ be a character on $\breve \BH(E)$. If $\breve \xi$ is doubly relevant or doubly good, then the character $\breve \xi\circ \jmath_x$  of $\breve \oH_s(E_s)$ is respectively linearly relevant  or linearly good.
\end{prpd}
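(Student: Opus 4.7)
The plan is to verify the two implications (doubly good $\Rightarrow$ linearly good, doubly relevant $\Rightarrow$ linearly relevant) by explicit computation with the formula for $\jmath_x$, combined with the structural decompositions developed in the preceding subsections.

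For the linearly good half, I restrict attention to $\oH(E_s)\subset \breve\oH_s(E_s)$, since ``linearly good'' only constrains the restriction to this subgroup. For $g\in \oH(E_s)$ we have $\jmath_x(g,1)=(1,xgx^{-1},g)$; normality of $x$ gives $x\in \oG(E_s)$ (because $x$ commutes with $s=x\bar x^{-1}$) and $xgx^{-1}\in \oH(E_s)$ (via a short computation of $\overline{xgx^{-1}}$ using $\bar g = g$ and $g$ commuting with $s$). If $\breve\xi|_N=\chi\otimes\chi^{-1}$ for a good character $\chi$ of $\oH(E)$, then $(\breve\xi\circ\jmath_x)(g,1)=\chi(xgx^{-1})\chi(g)^{-1}$. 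This matches the defining formula for linearly good with $y=x$ (satisfying $y=s\bar y$, since $s\bar x = s\cdot xs^{-1} = x$) and $\chi'=\chi|_{\oH(E_s)}$. What remains is that $\chi|_{\oH(E_s)}$ is good on $\oH(E_s)$; this follows from Lemmas~\ref{as2}, \ref{as}, and \ref{sis}, which together imply that each real simple factor of $A_s$ is a copy of a real simple factor of $A$, so the restriction of $\chi$ corresponds to an inclusion $\GL_m(\rk')^2\hookrightarrow \GL_n(\rk')^2$ with $m\leq n$, under which the pseudo-algebraic non-triviality condition can only weaken.

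For the linearly relevant half, I first observe that the doubly relevant condition $\breve\xi(\delta,g,g)=\delta$ implies $\breve\xi(-1,\tau,\tau)=-1$ for \emph{every} $\tau$ in the $\delta=-1$ coset of $\breve\oH(E)$, not merely for one fixed such $\tau$: writing $(-1,\tau,\tau)=(-1,\sigma_0,\sigma_0)\cdot(1,\sigma_0^{-1}\tau,\sigma_0^{-1}\tau)$ for some fixed $\sigma_0$ and applying $\breve\xi$, the cofactor lands on the diagonal of $N$ where $\chi\otimes\chi^{-1}$ is trivial. Then, for $(g,-1)\in\breve\oH_s(E_s)$ in the non-trivial coset, decompose
\[
(-1,gx^{-1},xg)=(1,gx^{-1}\tau^{-1},xg\tau^{-1})\cdot(-1,\tau,\tau)
\]
in $\breve\BH(E)$, for any $\tau$ in the $\delta=-1$ part of $\breve\oH(E)$. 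Verifying $(1,gx^{-1}\tau^{-1},xg\tau^{-1})\in \breve\BH(E)$ reduces to $\overline{gx^{-1}}=gx^{-1}$ and $\overline{xg}=xg$, both of which follow from $\bar g=sg$, $\bar x=xs^{-1}$, the $A_s$-antilinearity relation $gs=s^{-1}g$, and normality of $x$. Applying $\breve\xi$ (and noting that $\tau$ cancels inside the argument of $\chi$) gives
\[
(\breve\xi\circ\jmath_x)(g,-1)=-\chi(gx^{-1}g^{-1}x^{-1}).
\]

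The main obstacle is to show $\chi(gx^{-1}g^{-1}x^{-1})=1$ whenever the image of $(g,-1)$ in $\breve\oH_{-1}((E_s)^-)$ has order two. Since $x$ commutes with $s$, the decomposition $A_s=A'_{A_s}\times A^+_{A_s}\times A^-_{A_s}$ (and the corresponding decomposition $E_s=(E_s)'\times (E_s)^+\times (E_s)^-$) is preserved by $x$, so $gx^{-1}g^{-1}x^{-1}$ splits component-wise. On $(E_s)^+$, where $s=1$ forces $\bar x=x$ so that $x|_{(E_s)^+}\in \oH((E_s)^+)$, and on $(E_s)'$, where $A'_{A_s}$ is complex, the contribution should reduce to $1$ by arguments analogous to Lemma~\ref{good1} (Lemma~\ref{beta0} allows one to replace $x$ by a scalar on the complex component). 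On $(E_s)^-$, where $g|_{(E_s)^-}$ has order two and $x|_{(E_s)^-}$ is grading-reversing, Lemma~\ref{realm1} reduces to a canonical form for $g|_{(E_s)^-}$, making $gx^{-1}g^{-1}x^{-1}|_{(E_s)^-}$ explicit; one verifies by direct computation in the $\GL_n(\rk')^2$-realization that the resulting element lies in the kernel of $\chi$. Assembling these three per-component observations using the factorization of $\breve\BH(E)$ over the simple factors of $A$ is expected to close the proof; handling the $(E_s)^-$ contribution cleanly is the main delicate point.
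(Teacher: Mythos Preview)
Your treatment of the doubly good case is correct and essentially matches the paper, which simply records this implication as obvious (the key point being that good characters of $\oH(E)$ restrict to good characters of $\oH(E_s)$).

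For the doubly relevant case your route diverges from the paper's and has a real gap. Your computation correctly yields
\[
(\breve\xi\circ\jmath_x)(g,-1)=-\chi(gx^{-1}g^{-1}x^{-1}),
\]
so the problem becomes showing $\chi(gx^{-1}g^{-1}x^{-1})=1$ for \emph{every} character $\chi$ of $\oH(E)$ (all such $\chi$ arise from doubly relevant $\breve\xi$), i.e.\ that $gx^{-1}g^{-1}x^{-1}$ lies in the derived subgroup of $\oH(E)$. Your proposed per-component argument does not establish this. On $(E_s)'$ and $(E_s)^+$ the appeal to ``arguments analogous to Lemma~\ref{good1}'' is misplaced: that lemma concerns linearly good characters being trivial on those factors, not arbitrary $\chi$ killing a particular element. (A determinant argument does work there, using that conjugation by an MVW element inverts determinants, but you do not give it.) On $(E_s)^-$ you invoke Lemma~\ref{realm1} to normalize $g$, yet this conjugation does not simultaneously normalize $x$, and you acknowledge the verification is not carried out; this is exactly the delicate point.

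The paper avoids this entirely by a conjugation trick. After first reducing to $s=\alpha\in A$, it proves (Lemma~\ref{g2}) that whenever the image of $(g,-1)$ in $\breve\oH_{-1}(E^-)$ has order~$2$ there exists $(b,-1)\in\breve\oH(E)$ with $b^2=g^2$; on the real part this uses $g^2=1$, and on the complex part one takes $b=\beta g$ with $\beta$ as in Lemma~\ref{beta0}. One then checks directly that
\[
(-1,\,gx^{-1},\,b)\,(-1,\,gx^{-1},\,xg)\,(-1,\,gx^{-1},\,b)^{-1}=(-1,\,b,\,b),
\]
so $\breve\xi(\jmath_x(g,-1))=\breve\xi(-1,b,b)=-1$ by the definition of doubly relevant. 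This single conjugation replaces your entire component-by-component analysis and is the missing idea.
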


Note that $x\in \oG(E_s)$, the image of the map \eqref{adx} is contained in $\breve \BH(E_s)$, and every doubly good or doubly relevant character of $ \breve \BH(E)$ restricts to a character of $\breve \BH(E_s)$ which is respectively doubly good or doubly relevant. Thus for the proof of Proposition \ref{pad0}, we assume without loss of generality that $s=\alpha\in A$.

Write
\[
A=A'\times A^+\times A^-\quad\textrm{and}\quad E=E'\times E^+\times E^-,
\]
as in \eqref{deca} and \eqref{dece}.

\begin{lemd}\label{g2}
Let $(g, -1)\in \breve \oH_\alpha(E)$. Assume that the image of $(g, -1)$ under the natural homomorphism $ \breve \oH_\alpha(E)\rightarrow  \breve \oH_{-1}(E^-)$ has order $2$. Then there is an element $(b, -1)\in \breve \oH(E)$ such that $b^2=g^2$.
\end{lemd}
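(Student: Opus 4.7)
The plan is to decompose along the product $A = A' \times A^+ \times A^-$ of \eqref{deca} (with $A'$ complex, $A^\pm$ real, and $\alpha$ projecting to $\pm 1$ on $A^\pm$) and build $b$ componentwise. Write $g = (g', g^+, g^-)$ accordingly. The hypothesis that the image of $(g,-1)$ in $\breve\oH_{-1}(E^-)$ has order two translates to $(g^-)^2 = 1$, so it is enough to produce, on each factor, an element of $\breve\oG$ with $\delta = -1$ that is fixed by $\overline{\phantom{x}}$ and whose square matches $(g')^2$, $(g^+)^2$, and $1$, respectively.

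The $E^+$ component is immediate: since $\alpha$ restricts to $1$ on $A^+$, the condition $\overline{g^+} = g^+$ already holds, so I simply take $b^+ := g^+$.

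For the complex component $E'$, I apply Lemma~\ref{beta0} to $\alpha|_{A'} \in \oV(A')$ to obtain $\beta' \in A'$ with $\beta'(\beta')^\tau = 1$ and $\beta'\,\overline{\beta'}^{-1} = \alpha|_{A'}$, and set $b' := \beta' g'$. The defining relations for $\beta'$ immediately give $\overline{b'} = b'$ and $(b',-1) \in \breve\oG(E')$. The key observation is that every $(g',-1) \in \breve\oG(E')$ is automatically $\tau$-semilinear over $A'$: from $\langle g'.u, g'.v\rangle_{E'} = \langle v, u\rangle_{E'}$ together with the $A'$-sesquilinearity of the Hermitian form and non-degeneracy, one deduces $g'(a.u) = a^\tau (g'.u)$ for all $a \in A'$ and $u \in E'$. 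Consequently $g'\beta' = (\beta')^\tau g'$ as $\rk$-linear operators, and
\[
  (b')^2 \;=\; \beta'(g'\beta')g' \;=\; \beta'(\beta')^\tau (g')^2 \;=\; (g')^2.
\]

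For the real component $E^-$, it suffices to exhibit one involution $(b^-,-1)$ in $\breve\oH_1(E^-) \setminus \oH(E^-)$. By \eqref{bgc} this reduces to picking, for each $i \in \{0,1\}$, an involution in $\breve\oG((E^-)_i) \setminus \oG((E^-)_i)$, whose existence is clear from the semi-direct product realisation of $\breve\oG$ in \eqref{gln}. Putting $b := (b', b^+, b^-)$ together yields $(b,-1) \in \breve\oH(E)$ with $b^2 = g^2$, as required. I expect the $E'$ piece to be the main content: it is where the complexity of $A'$ is really used (via Lemma~\ref{beta0}), and where the $\tau$-semilinearity trick replaces the naive (and incorrect) computation $(\beta' g')^2 = (\beta')^2 (g')^2$ that would hold were $g'$ to be $A'$-linear.
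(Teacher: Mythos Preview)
Your proof is correct and follows essentially the same strategy as the paper: reduce to the factors of $A$ and, on the complex part, set $b=\beta g$ using Lemma~\ref{beta0}. The paper's proof is terser---it reduces to $A$ simple, declares the real case ``obvious'', and in the complex case simply asserts that $b:=\beta g$ works---whereas you supply the verification the paper omits, namely the $\tau$-semilinearity of $g'$ (forced by $\langle g'.u,g'.v\rangle=\langle v,u\rangle$ together with non-degeneracy) and the resulting identity $g'\beta'=(\beta')^\tau g'$ that gives $(\beta' g')^2=\beta'(\beta')^\tau(g')^2=(g')^2$. Your handling of $E^+$ (take $b^+=g^+$) and $E^-$ (produce an involution in $\breve\oH_1(E^-)\setminus\oH(E^-)$ via \eqref{bgc}) matches the ``obvious'' real cases; for $E^-$ you could equally well cite Lemma~\ref{realm1} applied factorwise, but your route through \eqref{bgc} and the explicit order-$2$ element of $\breve\oG$ is fine.
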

\begin{proof}
Without loss of generality assume that $A$ is simple. The lemma is obvious when $A$ is real. So we further assume that $A$ is complex.
 Using Lemma \ref{beta0}, take an element $\beta\in {A}^\times$  such that
\[
 \beta \beta^\tau =1 \quad\textrm{and}\quad {\beta}{\bar \beta}^{-1}=\alpha.
\]
Then $b:=\beta g$
fulfills the requirement of the lemma.
\end{proof}

Let $\breve \xi$ be a character on $\breve \BH(E)$ as in Proposition \ref{pad0}.
\begin{lemd}\label{pad}
If $\breve \xi$ is doubly  relevant,  then character $\breve \xi\circ \jmath_x$  is linearly  relevant.

\end{lemd}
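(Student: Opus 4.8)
The plan is to verify directly the defining condition of Definition \ref{rele2} for the character $\breve\xi\circ\jmath_x$ on $\breve\oH_\alpha(E)$, having reduced (as already noted) to the case $s=\alpha\in A$. So let $(g,-1)$ be an element of $\breve\oH_\alpha(E)\setminus\oH(E)$ whose image under the natural homomorphism $\breve\oH_\alpha(E)\to\breve\oH_{-1}(E^-)$ has order $2$; I must show $(\breve\xi\circ\jmath_x)(g,-1)=-1$. First I would apply the formula \eqref{adx}, which gives $\jmath_x(g,-1)=(-1,\,gx^{-1},\,xg)\in\breve\BH(E)$, so the quantity to compute is $\breve\xi(-1,\,gx^{-1},\,xg)$.

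The key step is to relate this to the ``diagonal'' elements on which $\breve\xi$ is pinned down by double relevance. Since $x\bar x=\bar x x$ and $s=\alpha$, the element $x$ lies in $\oG(E_s)$ and commutes appropriately with the decomposition $E=E'\times E^+\times E^-$; by Lemma \ref{beta0} I may further arrange (after the harmless reduction to $A$ simple, handled componentwise) that $x$ acts as a suitable scalar $\beta$ on $E'$ and as $\pm1$ on $E^\pm$. Now I would invoke Lemma \ref{g2}: there is $(b,-1)\in\breve\oH(E)=\breve\oH_1(E)$ with $b^2=g^2$. The point of passing to $b$ is that $(b,-1)\in\breve\oH(E)$ means $xgx^{-1}$ and $g$ (or rather $gx^{-1}$ and $xg$) can be compared against a genuine element of $\breve\oH(E)$, so that $\jmath_x(g,-1)$ differs from $\jmath_x(b,-1)$-type diagonal data only by a square in $\oH(E)\times\oH(E)$, on which any character is trivial. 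Concretely, I expect that the element $(-1,gx^{-1},xg)^2=(1,\,gx^{-1}xg,\,xg\,gx^{-1})=(1,g^2,\,xg^2x^{-1})$, together with $b^2=g^2$ and the commutation of $x$ with $b^2$, lets me write $(-1,gx^{-1},xg)$ as a product of $(-1,b,b)$ (or $(-1,\,xbx^{-1},\,b)$, which up to the scalar/sign normalization of $x$ is the same) with an element of $\oH(E)\times\oH(E)$ lying in the kernel of $\breve\xi$. Applying double relevance, $\breve\xi(-1,b,b)=-1$, which gives the claim.

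The main obstacle I anticipate is the bookkeeping in the complex component $E'$: there $x$ acts by a scalar $\beta$ with $\beta\bar\beta^{-1}=\alpha$ and $\beta\beta^\tau=1$, and one must check that the scalar discrepancies introduced by conjugating with $x$ rather than with $b$ all collapse to squares in $\oH(E')$ — or, more cleanly, use Lemma \ref{good1}/the remark after Definition \ref{rele2} that a doubly (resp.\ linearly) relevant character is trivial on the $E'$- and $E^+$-parts, so the entire computation can be pushed to the real component $E^-$, where $\breve\oH_{-1}(E^-)$ is the explicit semidirect product \eqref{gm1} and the order-$2$ hypothesis makes the verification of $\breve\xi=-1$ immediate from Definition \ref{rele4}. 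I would organize the write-up so that the reduction to the $E^-$-part comes first, making the remaining check essentially the content of Lemma \ref{realm1} combined with double relevance.
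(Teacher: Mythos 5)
Your proposal assembles the right ingredients --- the reduction to $s=\alpha$, the element $(g,-1)$ whose image in $\breve\oH_{-1}(E^-)$ has order $2$, Lemma \ref{g2} producing $(b,-1)\in\breve\oH(E)$ with $b^2=g^2$, and the evaluation $\breve\xi(-1,b,b)=-1$ from double relevance --- but the step that links $\breve\xi(-1,gx^{-1},xg)$ to $\breve\xi(-1,b,b)$ is where the argument breaks. If you factor $(-1,gx^{-1},xg)=(-1,b,b)\cdot(1,\,b^{-1}xg,\,b^{-1}gx^{-1})$, the correction factor does lie in $\oH(E)\times\oH(E)$, but it is neither a square nor a diagonal element, and a doubly relevant $\breve\xi$ is only constrained on the diagonal $(1,h,h)$: nothing whatsoever is assumed about its restriction to the rest of $\oH(E)\times\oH(E)$ (and in any case characters are not trivial on squares unless they are quadratic, which is not a hypothesis here). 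Your fallback --- pushing the computation to the real component $E^-$ --- has the same defect: the triviality on $\oH(E')\times\oH(E^+)$ that you quote from Lemma \ref{good1} and the remark after Definition \ref{rele2} is a property of \emph{linearly} good/relevant characters of $\breve\oH_\alpha(E)$, i.e.\ of the conclusion you are trying to establish, not of the doubly relevant hypothesis on $\breve\BH(E)$; moreover the element $(g,-1)$ in Definition \ref{rele2} is unconstrained on its $E'$- and $E^+$-components, so you cannot simply discard them. Also note that $x$ need not commute with $g^2=b^2$, so the squaring computation you sketch does not by itself produce a usable identity, and even granting it you would only learn $\breve\xi(\jmath_x(g,-1))^2=1$, not the sign.

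The paper's proof closes exactly this gap by a conjugation rather than a factorization: since $\overline{gx^{-1}}=gx^{-1}$ (using $\bar g=\alpha g$, $\bar x=\alpha^{-1}x$ and $\alpha\alpha^\tau=1$), the triple $(-1,gx^{-1},b)$ lies in $\breve\BH(E)$, and a direct computation in $\breve\BH(E)$ using $b^2=g^2$ gives
\[
  (-1,\,gx^{-1},\,b)\,(-1,\,gx^{-1},\,xg)\,(-1,\,gx^{-1},\,b)^{-1}=(-1,\,b,\,b).
\]
Because a character is invariant under conjugation, this yields $\breve\xi(\jmath_x(g,-1))=\breve\xi(-1,b,b)=-1$ with no need to control $\breve\xi$ off the diagonal. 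If you replace your factorization step by this conjugation identity, the rest of your outline goes through.
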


\begin{proof}
Let $(g, -1)$ be as in Lemma \ref{g2}.  Then $(-1, gx^{-1}, b)\in \breve \BH(E)$ and
\[
  (-1, gx^{-1}, b) (-1, gx^{-1}, xg) (-1, gx^{-1}, b)^{-1}=(-1, b,b),
\]
where
$b$ is as in Lemma \ref{g2}. The lemma then easily follows.
\end{proof}

It is obvious that if  $\breve \xi$ is doubly  good,  then the character $\breve \xi\circ \jmath_x$  is linearly good. This finishes the proof of Proposition \ref{pad0}.

\section{A vanishing result of generalized functions}
As before, let $A=A_0\oplus A_1$ be a split graded involutive algebra, $\alpha\in \oV(A)$, and let $E=E_0\oplus E_1$ be a complex graded Hermitian $A$-module.
\subsection{The main result}\label{svar}

Let the group $\breve{\oH}_\alpha(E)$ act on $\v(E)$ by
\begin{eqnarray*}\label{gactionv}
  (g,\delta).x:=\delta gxg^{-1},\qquad (g,\delta)\in \breve{\oH}_\alpha(E),\ x\in \v(E).
\end{eqnarray*}
The main goal of this section is to prove the following result.

\begin{thmd}\label{vanishl}
Let $\breve \chi$ be a character of $\breve{\oH}_\alpha(E)$ which is linearly good and linearly relevant. Then the space of ${\breve \chi}$-equivariant generalized functions on
$\v(E)$ is zero, that is,
\begin{equation}\label{vanishl2}
  \con^{-\infty}_{\breve \chi}(\v(E))=0.
\end{equation}
\end{thmd}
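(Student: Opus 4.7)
My plan is to prove Theorem \ref{vanishl} by induction on $\dim_\rk E$, adapting the Harish--Chandra descent strategy of Jacquet--Rallis \cite{JR} and Aizenbud--Gourevitch \cite{AG} to the graded Hermitian / MVW-extension framework set up in Section 2. First reduce to $A$ being simple via the product decomposition into simple graded involutive quotients: this induces compatible decompositions of $E$, $\v(E)$, $\breve\oH_\alpha(E)$, and of the character $\breve\chi$, and Lemmas \ref{singlex}, \ref{good1} together with Definitions \ref{rele1}--\ref{rele2} guarantee that linear goodness and linear relevance descend to every simple factor.

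For the inductive step, let $f\in\con^{-\infty}_{\breve\chi}(\v(E))$. The first stage is to prove $\operatorname{supp} f\subseteq\mathcal N$, the nilpotent cone of $\v(E)$. Given $x\notin\mathcal N$, the semisimple part $s=x_s$ of its Jordan decomposition lies in $\v(E)$ and is non-zero. Apply Luna's slice theorem (or, in the non-archimedean case, Bernstein localization) at the $\breve\oH_\alpha(E)$-orbit of $s$, using the Harish--Chandra descent machinery of \S\ref{descent}: this identifies germs of $\breve\chi$-equivariant generalized functions near the orbit with $\breve\chi|_{\breve\oH_{\alpha_s}(E_s)}$-equivariant generalized functions on the slice $\v(E_s)$. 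By Lemma \ref{stagp1} the restricted character is again linearly good and linearly relevant, by Lemma \ref{hcc} $E_s$ remains complex, and by Lemma \ref{sis} $A_s$ remains split, so the induction hypothesis kills the slice distribution whenever $\dim_\rk\v(E_s)<\dim_\rk\v(E)$; Lemma \ref{as} ensures this strict inequality except in the degenerate case $s=a\cdot\mathrm{id}$ with $a\in A_1$, $\bar a=-a$ (only possible when $A$ has a complex factor). This degenerate case is handled by the $\oH(E)$-equivariant translation $x\mapsto x-s$, which transports the germ at $s$ to the germ at $0$ for the $\oH(E)$-action; the resulting ambiguity for the MVW-extension, which is not respected by translation by a non-zero central element, is eliminated by the \emph{linearly relevant} hypothesis (Definition \ref{rele2}).

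It remains to rule out $f$ supported on $\mathcal N$. The scaling action of $\rk^\times$ on $\v(E)$ normalizes the $\breve\oH_\alpha(E)$-action and preserves $\mathcal N$, which is a finite union of $\oH(E)$-orbits (Jordan strata). Any generalized function supported on $\mathcal N$ admits a filtration by transversal order along the strata; each graded piece is a section of a line bundle on a single stratum and transforms under $\rk^\times$ by a pseudo-algebraic character whose exponent is bounded in terms of $n$ and the orbit dimensions, lying within the explicit ranges $r\in\{\pm1,\dots,\pm n\}$, $m\in\{1,\dots,2n^2\}$ appearing in the definition of \emph{good} (\S\ref{secach}). Comparing this scaling weight with the central character $\breve\chi|_{Z(\oH(E))}$ and invoking the \emph{linearly good} condition forces $f=0$ on each stratum; the residual contributions from the real simple factor $A^-$ of $A$, where the central character alone is insufficient, are killed by the \emph{linearly relevant} sign on the order-two MVW-extension element (Lemma \ref{realm1}). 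The main obstacle will be this last stage: carefully computing the pseudo-algebraic weights that arise from each Jordan stratum of $\mathcal N$ and verifying that the explicit ranges in Definition \ref{rele1} (dictated by the global application) genuinely exhaust all of them, so that the \emph{good} condition excludes every surviving distribution.
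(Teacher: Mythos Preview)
Your descent-to-the-nilpotent-cone outline matches the paper's, but there are two problems. The minor one: your induction parameter $\dim_\rk E$ never decreases under descent, since $E_s=E$ as a $\rk$-vector space; the paper inducts instead on $\mathrm{sdim}(E):=\dim_\rk E-\dim_\rk A$, which drops strictly because $A_s\supsetneq A$ whenever $s\notin\v(A)$. Relatedly, for complex simple $A$ the paper does not run descent at all but uses the equivariant isomorphism $\v(E)\cong\g(E_0)$ of Lemma~\ref{beta} to reduce directly to the classical conjugation statement on $\g(E_0)$ (part~(a) of Theorem~\ref{tlin}); this disposes of your ``degenerate central $s$'' case and furnishes the induction base $\mathrm{sdim}(E)=0$.

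The serious gap is the nilpotent step. Your claim that the dilation weight on each nilpotent stratum is a pseudo-algebraic character with exponent bounded in terms of $n$ is false as stated: in the archimedean case transversal derivatives produce $\breve\chi$-equivariant distributions of arbitrarily high homogeneity, and the dilation on $\v(E)$ is \emph{not} induced from the center of $\oH(E)$ (the diagonal center acts trivially on $\v(E)$), so $\breve\chi$ by itself imposes no homogeneity constraint whatsoever. The MVW sign from linear relevance does not rescue this either; on the nilpotent cone the paper in fact uses only the $\oH(E)$-equivariance (i.e.\ only goodness of $\gamma$). What is missing is the Fourier transform. The paper first passes to tempered generalized functions via \cite[Theorem~4.2]{AG}, observes that the Fourier transform $\CF$ on $\v(E)$ (for the trace form) preserves $\con^{-\xi}_{\breve\chi}(\v(E))$, and then the descent argument shows that \emph{both} $f$ and $\CF f$ are supported in $\CN_E$. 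Only now does the homogeneity theorem \cite[Theorem~5.1.7]{AG} apply: a nonzero Fourier-stable space of tempered generalized functions supported in the null quadric must contain a dilation eigenvector with eigenvalue $\kappa^{-1}\,|\cdot|^{\dim\v(E)/2}$ for some pseudo-algebraic $\kappa$. Comparing this forced eigenvalue with the eigenvalues that can occur on a single orbit, computed via an $\sl_2$-triple (this computation is exactly where the ranges $r\in\{\pm 1,\dots,\pm n\}$ and $m\in\{1,\dots,2n^2\}$ in the definition of ``good'' arise), yields the contradiction. Without the Fourier constraint there is no two-sided bound on homogeneity and your argument cannot close.
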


Recall that a generalized function $f$ on $\v(E)$ is said to be ${\breve \chi}$-equivariant if for all $g\in \breve{\oH}_\alpha(E)$,
\[f(g.x)={\breve \chi}(g)f(x),\quad  x\in \v(E).\]
The space of such generalized functions is denoted by $\con^{-\infty}_{\breve \chi}(\v(E))$. Similar notation  will be used later on without further explanation.

 Let the group $\breve{\oG}(E_0)$ act on the Lie algebra
$\g(E_0)$ by
\begin{equation*}\label{actiong}
  (g,\delta).x:=\delta gxg^{-1},\quad (g,\delta)\in \breve{\oG}(E_0), \, x\in \g(E_0).
\end{equation*}

\begin{lemd}\label{beta} Assume that $A$ is complex. Then there is an element $\gamma$  of $A_1\cap A^\times$ such that $\gamma^\tau=\gamma$. Moreover, the map
\be\label{isov}
  \v(E)\rightarrow \g(E_0),\quad x\mapsto (\gamma x)|_{E_0}
\ee
is a well-defined $\rk$-vector space isomorphism which is equivariant with respect to the group isomorphism $ \breve \oH_{\alpha}(E)\rightarrow \breve \oG(E_0)$ of  \eqref{isog0}.
\end{lemd}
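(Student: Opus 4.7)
The plan is to construct $\gamma$ from the structure theorem for split complex graded involutive algebras, show the map is a well-defined $\rk$-linear isomorphism using $E = A\otimes_{A_0}E_0$ from Lemma \ref{a0e}, and then verify the equivariance on the two components $\delta=\pm 1$ of $\breve\oH_\alpha(E)$ separately.

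For the existence of $\gamma$, I would decompose $A$ into simple graded involutive factors. Since $A$ is complex, Lemma \ref{cac} forces each factor to be complex, and since $A$ is also split, Lemma \ref{simplerealcomplex} writes each simple factor as $\rk''\times\rk''$ with $\rk''=\rk'\oplus\rk'\sigma$ graded over $\rk'$ and $\tau$ the coordinate exchange. Taking $\gamma:=(\sigma,\sigma)$ in each factor (noting $\sigma^2\in\rk'^{\times}$) and assembling then gives the required element of $A_1\cap A^\times$ fixed by $\tau$.

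For well-definedness of $x\mapsto(\gamma x)|_{E_0}$: since $\gamma\in A_1$ is invertible, multiplication by $\gamma$ swaps $E_0$ and $E_1$, so $\gamma x$ preserves the grading (as $x$ also reverses it) and is $A$-linear (as $A$ is commutative and $x$ is $A$-linear). Skew-Hermiticity transfers through because $\gamma^\tau=\gamma$:
\[
  \la\gamma x u,v\ra_E+\la u,\gamma x v\ra_E = \gamma\bigl(\la xu,v\ra_E+\la u,xv\ra_E\bigr) = 0
\]
for $u,v\in E_0$. Combined with $E = A\otimes_{A_0}E_0$ from Lemma \ref{a0e}, invertibility of $\gamma$ yields both injectivity and surjectivity, with inverse sending $y\in\g(E_0)$ to the unique $A$-linear extension of $\gamma^{-1}y:E_0\to E_1$.

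Equivariance for $(g,1)\in\breve\oH_\alpha(E)$ is immediate, since such $g$ is $A$-linear and grading-preserving, hence commutes with multiplication by $\gamma$. The main obstacle is the case $\delta=-1$: here $g$ is only $\tau$-conjugate-$A$-linear (satisfying $ag=ga^\tau$) and does not itself preserve the grading, while \eqref{isog0} uses the grading-preserving operator $\beta g$ (grading-preservation from $\overline{\beta g}=\bar\beta\,\bar g=\bar\beta\alpha g=\beta g$ via $\alpha=\beta\bar\beta^{-1}$). Unwinding the action on $\g(E_0)$, the required identity reduces to the operator identity $\beta g\cdot\gamma x\cdot g^{-1}\beta^{-1}=\gamma\cdot gxg^{-1}$ on $E$, which I would verify by moving $\beta$ past $g$ via $\beta g=g\beta^\tau$ and cancelling $\beta^\tau$ against $(\beta^\tau)^{-1}$ using $A$-linearity of $x$, together with $\gamma g=g\gamma$ (from $\gamma^\tau=\gamma$).
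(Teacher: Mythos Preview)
Your proposal is correct and follows the same approach as the paper. The paper's proof is extremely brief: it invokes Lemma \ref{simplerealcomplex} for the existence of $\gamma$, declares the well-definedness and equivariance ``routine to check,'' and writes the inverse explicitly as $y\mapsto \gamma^{-1}(1_A\otimes y)$---which is exactly your ``unique $A$-linear extension of $\gamma^{-1}y$.'' Your treatment of the $\delta=-1$ equivariance via the conjugate-$A$-linearity relation $ag=ga^\tau$ and the cancellation of $\beta^\tau$ against $(\beta^\tau)^{-1}$ is precisely the computation the paper suppresses.
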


\begin{proof}
The existence of such $\gamma$  follows form   Lemma \ref{simplerealcomplex}.
It is routine to check that the map \eqref{isov} is well defined and equivariant with respect to the group isomorphism  \eqref{isog0}.
It is bijective since it has an inverse map
\[
  \g(E_0)\rightarrow \v(E),\quad x\mapsto \gamma^{-1}(1_A\otimes x).
\]
\end{proof}

Theorem \ref{vanishl} is easily reduced to the case when $A$ is simple. When $A$ is complex, we know from Lemma \ref{beta} that Theorem \ref{vanishl}  is equivalent to saying that
\begin{equation*}\label{vamvw}
\con^{-\infty}_{{\breve \chi}_{E_0}}(\g(E_0))=0,
\end{equation*}
where ${\breve \chi}_{E_0}$ is the quadratic character of $\breve{\oG}(E_0)$ with kernel $\oG(E_0)$.  This is a reformulation of part (a) of Theorem \ref{tlin}, which is well known.
We record this result in  the following proposition.

\begin{prpd}\label{vanishcom}
Theorem \ref{vanishl} holds when $A$ is complex.
 \end{prpd}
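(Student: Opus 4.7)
The plan is to reduce Theorem \ref{vanishl} in the complex case to part (a) of Theorem \ref{tlin} via the isomorphism of Lemma \ref{beta}. First I would reduce to the case when $A$ is simple. Decomposing $A$ as a product of simple graded involutive algebras yields corresponding product decompositions of $E$, $\v(E)$, and $\oH(E)$, while $\breve{\oH}_\alpha(E)$ sits naturally inside the fiber product of the MVW-extensions of the simple factors over $\{\pm 1\}$. The hypotheses of linear goodness and linear relevance are inherited factor by factor, so it suffices to treat each complex simple factor individually.

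Assuming $A$ is simple and complex, Lemma \ref{beta} provides an $\rk$-vector space isomorphism $\v(E) \cong \g(E_0)$ that is equivariant with respect to the group isomorphism $\breve{\oH}_\alpha(E) \cong \breve{\oG}(E_0)$ of \eqref{isog0}. Under this identification the problem becomes showing that the transported character $\breve{\chi}'$ of $\breve{\oG}(E_0)$ admits no nonzero equivariant generalized functions on $\g(E_0) \cong \oM_n(\rk')$, where $\rk'$ is the local field arising from Lemma \ref{simplerealcomplex}.

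The key step is to identify $\breve{\chi}'$ with the quadratic character $\breve{\chi}_{E_0}$ on $\breve{\oG}(E_0)$ whose kernel is $\oG(E_0)$. By Lemma \ref{beta0} one may take the element $x$ in the set \eqref{x} to be a central scalar $\beta \in A^\times$; then $xgx^{-1} = g$ for every $g \in \oH(E)$, so linear goodness forces $\breve{\chi}$ to be trivial on $\oH(E)$. On the other hand, since $A$ is complex, the decomposition \eqref{deca} gives $E^- = 0$, hence $\breve{\oH}_{-1}(E^-) = \{\pm 1\}$, and the natural homomorphism $\breve{\oH}_\alpha(E) \to \breve{\oH}_{-1}(E^-)$ is simply projection to the $\delta$-coordinate. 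Every element of $\breve{\oH}_\alpha(E) \setminus \oH(E)$ therefore has image of order two in $\{\pm 1\}$, and linear relevance forces $\breve{\chi} \equiv -1$ on this coset. This pins down $\breve{\chi}$, and hence $\breve{\chi}'$, as the quadratic character described.

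Finally, applying part (a) of Theorem \ref{tlin} with $\rk'$ in place of $\rk$, any $\oG(E_0) \cong \GL_n(\rk')$-conjugation invariant generalized function $f$ on $\g(E_0) \cong \oM_n(\rk')$ satisfies $f(x) = f(x^t)$, whereas $\breve{\chi}_{E_0}$-equivariance additionally forces $f(x^t) = -f(x)$; together these give $f = 0$. The hard part, if any, is really just the bookkeeping in the reduction to simple $A$; once the fiber-product structure of $\breve{\oH}_\alpha(E)$ and the factorwise behavior of the character are properly unpacked, the rest is immediate from Theorem \ref{tlin}(a).
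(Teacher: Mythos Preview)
Your proposal is correct and follows exactly the approach the paper takes: reduce to simple $A$, transport via Lemma \ref{beta} to the $\breve{\oG}(E_0)$-action on $\g(E_0)$, identify the transported character with the quadratic character $\breve{\chi}_{E_0}$, and invoke Theorem \ref{tlin}(a). The paper states this in two sentences just before the proposition and gives no further argument; you have simply filled in the verification (via Lemmas \ref{beta0} and \ref{good1} and Definition \ref{rele2}) that linear goodness plus linear relevance force $\breve{\chi}$ to be the sign character on $\breve{\oH}_\alpha(E)/\oH(E)$, which the paper leaves implicit.
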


When $A$ is real and $\alpha=1$,  Theorem \ref{vanishl}  is a reformulation of part (b) of Theorem \ref{tlin}. When $A$ is real and $\alpha=-1$,  Theorem \ref{vanishl}  is a reformulation of part (c) of Theorem \ref{tlin}.



\subsection{The Fourier transform}
Let $\breve \chi$ be as in Theorem \ref{vanishl}.
Recall that by \cite[Theorem 4.2]{AG}, the equality \eqref{vanishl2}
is implied by
\begin{equation}\label{vanishl2'}
  \con^{-\xi}_{\breve \chi}(\v(E))=0.
\end{equation}
Here the left-hand side of \eqref{vanishl2'} stands for the space of
${\breve \chi}$-equivariant tempered generalized functions on
$\v(E)$, and similar notation will be used later on. Note that in the non-archimedean case, all generalized functions are said to be tempered by convention.

 Define a non-degenerate symmetric $\rk$-bilinear form on $\g(E)$ by
\begin{equation}\label{bform}
  \la y,z\ra_{\g(E)}:=\textrm{ the trace of $yz$ as a $\rk$-linear operator
  on $E$.}
\end{equation}
Note that the restriction of this bilinear form on $\v(E)$ is still non-degenerate.
Fix a non-trivial unitary character  $\psi_\rk$  of $\rk$ as in the Introduction. Denote by
\begin{equation}\label{fourier}
\mathcal F:\con^{-\xi}(\v(E))\rightarrow \con^{-\xi}(\v(E))
\end{equation}\label{fourt}
 the Fourier transform which is normalized such that for every Schwartz function $f$ on $\v(E)$,
\begin{equation}\label{fourt00}
\mathcal F(f)(x)=\int_{\v(E)}f(y)\psi_\rk(\la x,y\ra_{\g(E)}) \mathrm{d}\,y,\quad x\in \v(E),
\end{equation}
where $\mathrm{d} y$ is the self-dual Haar measure on $\v(E)$. It is clear that  the Fourier transform \eqref{fourier} intertwines the action of $\breve{\oH}_\alpha(E)$.  Thus we have the following lemma.

\begin{lemd}\label{four0}
The Fourier transform $\CF$ preserves the space $\con^{-\xi}_{\breve \chi}(\v(E))$.
\end{lemd}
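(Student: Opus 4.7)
The plan is to reduce the assertion to the standard fact that the Fourier transform on a finite-dimensional $\rk$-vector space, normalized by an invariant non-degenerate bilinear form, intertwines any group of linear maps preserving that form. What needs to be checked is that the action of $\breve{\oH}_\alpha(E)$ on $\v(E)$ preserves the restriction of the trace form $\la\,,\,\ra_{\g(E)}$ of \eqref{bform}.

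First I would verify the form invariance by a direct computation. For $(g,1)\in\breve\oH_\alpha(E)$ and $x,y\in\v(E)$, the action is $x\mapsto gxg^{-1}$, and
\[
  \la gxg^{-1}, gyg^{-1}\ra_{\g(E)}=\tr(gxyg^{-1})=\tr(xy)=\la x,y\ra_{\g(E)}.
\]
For $(g,-1)\in\breve\oH_\alpha(E)$, the action is $x\mapsto -gxg^{-1}$, and since the sign squares to $1$ the same computation gives invariance:
\[
  \la -gxg^{-1},-gyg^{-1}\ra_{\g(E)}=\tr(gxyg^{-1})=\la x,y\ra_{\g(E)}.
\]
In particular each element of $\breve{\oH}_\alpha(E)$ acts on $\v(E)$ as an orthogonal transformation with respect to the (non-degenerate) restricted trace form, hence has determinant $\pm 1$ and preserves the self-dual Haar measure $\mathrm d y$ used in \eqref{fourt00}.

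Next I would invoke the standard change-of-variable argument: for a Schwartz function $f$ and $(g,\delta)\in\breve\oH_\alpha(E)$, letting $T$ denote the action of $(g,\delta)$ on $\v(E)$, one has $T^*=T^{-1}$ with respect to the form, $|\det T|=1$, and
\[
  \mathcal F(T.f)(x)=\int_{\v(E)} f(T^{-1}y)\,\psi_\rk(\la x,y\ra_{\g(E)})\,\mathrm d y
  =\int_{\v(E)} f(y')\,\psi_\rk(\la T^{-1}x,y'\ra_{\g(E)})\,\mathrm d y'
  =T.\mathcal F(f)(x),
\]
so $\mathcal F$ is $\breve{\oH}_\alpha(E)$-equivariant on Schwartz functions. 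By duality the same equivariance extends to tempered generalized functions (in the non-archimedean case, to all generalized functions, since every generalized function is tempered by convention).

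Finally, if $f\in\con^{-\xi}_{\breve\chi}(\v(E))$, then for every $g\in\breve\oH_\alpha(E)$ the equivariance of $\mathcal F$ gives
\[
  g.\mathcal F(f)=\mathcal F(g.f)=\mathcal F(\breve\chi(g)f)=\breve\chi(g)\mathcal F(f),
\]
so $\mathcal F(f)\in\con^{-\xi}_{\breve\chi}(\v(E))$, as required. There is essentially no obstacle here; the only subtle point is bookkeeping the sign twist $\delta=-1$ in the action, and this is resolved by the observation that the sign appears twice in the bilinear form and therefore cancels.
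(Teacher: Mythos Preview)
Your proof is correct and follows exactly the approach the paper has in mind; the paper simply asserts that ``the Fourier transform intertwines the action of $\breve{\oH}_\alpha(E)$'' without writing out the form-invariance check or the change-of-variables, and you have supplied those details. One cosmetic point: with the action $(T.f)(y)=f(T^{-1}y)$ and the paper's convention $f(g.x)=\breve\chi(g)f(x)$, one actually has $g.f=\breve\chi(g)^{-1}f$ rather than $\breve\chi(g)f$, but this does not affect the argument since either way $\mathcal F(f)$ transforms by the same character as $f$.
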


\subsection{Reduction to the null cone}

Set
 \[\CN_E:=\{x\in \v(E)\mid x \text{\ is nilpotent as a $\rk$-linear operator on}\ E\}\]
and
\[\mathrm{sdim}(E):=\dim_\rk(E)-\dim_\rk(A).\]
We shall prove the following proposition in this subsection.

 \begin{prpd}\label{reduction}
 Assume that for all split graded involutive algebra $A'$, all $\alpha'\in \oV(A')$, all faithful complex graded Hermitian $A'$-module
$E'$ and all
character ${\breve \chi}'$ on $\breve{\oH}_{\alpha'}(E')$ which are linearly good and linearly relevant,
\begin{equation}\label{indu}
\mathrm{sdim}(E')<\mathrm{sdim}(E)\quad \Rightarrow \quad \con^{-\xi}_{{\breve \chi}'}(\v(E'))=0.
\end{equation}
Then every $f\in \con^{-\xi}_{\breve \chi}(\v(E))$ is supported in $\v(A)+\CN_E$, where
\[\v(A):=\{a\in A\mid a^\tau=a\ \text{and }\bar{a}=-a\}\subset \v(E).\]
\end{prpd}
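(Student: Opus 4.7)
The plan is to run a Jordan decomposition at the putative support point, apply Harish-Chandra descent at its semisimple part to invoke the inductive hypothesis, and then contract the nilpotent tail by an $\sl_2$-triple to conclude. Suppose for contradiction that $f\in\con^{-\xi}_{\breve\chi}(\v(E))$ and $x_0\in\mathrm{supp}(f)$ with $x_0\notin\v(A)+\CN_E$. Jordan-decompose $x_0=s+n$ in $\End_\rk(E)$ with $s$ semisimple, $n$ nilpotent, $[s,n]=0$; since $s$ and $n$ are $\rk$-polynomials in $x_0$ they lie in $\End_A(E)$. The involution $y\mapsto\bar y$ and the anti-involution $y\mapsto y^{\tau_E}$ each preserve semisimplicity and nilpotency and each send $x_0\in\v(E)$ to $-x_0$; by uniqueness of Jordan decomposition this forces $s,n\in\v(E)$. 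Since $n\in\CN_E$ and $x_0\notin\v(A)+\CN_E$, we conclude $s\notin\v(A)$, hence $s\in A_s\setminus A$ and the embedding $A\hookrightarrow A_s$ is strict, yielding
\[
  \mathrm{sdim}(E_s)=\dim_\rk E-\dim_\rk A_s<\dim_\rk E-\dim_\rk A=\mathrm{sdim}(E).
\]

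Next, I would verify that the descended data $(A_s,\alpha_s,E_s,\breve\chi')$ with $\breve\chi':=\breve\chi|_{\breve\oH_{\alpha_s}(E_s)}$ satisfies the inductive assumption. Lemma \ref{sis} gives $A_s$ split; Lemma \ref{hcc} gives $E_s$ complex (and $E_s$ is manifestly faithful over $A_s\subset\End_\rk(E)$); and Lemma \ref{stagp1} shows $\breve\chi'$ is both linearly good and linearly relevant. The inductive hypothesis \eqref{indu} then delivers $\con^{-\xi}_{\breve\chi'}(\v(E_s))=0$.

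I would then apply the generalized Harish-Chandra descent of Aizenbud--Gourevitch (\emph{cf.} \cite{AG}) at the semisimple element $s$: the centralizer of $s$ in $\v(E)$ is $\v(E_s)$ and its stabilizer in $\breve\oH_\alpha(E)$ is $\breve\oH_{\alpha_s}(E_s)$, so a $\breve\oH_\alpha(E)$-invariant neighborhood $U$ of $\breve\oH_\alpha(E).s$ is modeled by $\breve\oH_\alpha(E)\times_{\breve\oH_{\alpha_s}(E_s)}V$ for some $\breve\oH_{\alpha_s}(E_s)$-invariant neighborhood $V$ of $0$ in $\v(E_s)$, and the descent turns $f|_U$ into a tempered $\breve\chi'$-equivariant generalized function on $V$, which vanishes by the preceding step. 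To push the vanishing to $x_0=s+n$ itself I would invoke Jacobson--Morozov (Kostant--Rallis) in the symmetric pair $(\g(E_s),\h(E_s))$ to embed $n$ in an $\sl_2$-triple $(h,n,m)$ with $h\in\h(E_s)$. The one-parameter subgroup $g_t:=\exp(th)\subset\breve\oH_{\alpha_s}(E_s)$ fixes $s$ and acts on $n$ by $\Ad(g_t)n=e^{2t}n$, so $g_{-t}.(s+n)=s+e^{-2t}n\in U$ for $t$ sufficiently large; then the $\breve\chi$-equivariance of $f$ gives $f(s+n)=\breve\chi(g_{-t})^{-1}f(s+e^{-2t}n)=0$ as a germ at $x_0$, contradicting $x_0\in\mathrm{supp}(f)$.

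The main obstacle is locating and applying the precise form of generalized Harish-Chandra descent for \emph{tempered} equivariant generalized functions that treats both archimedean and non-archimedean $\rk$ uniformly, together with a careful check that the character twist produced by descent is exactly the restriction $\breve\chi'$ and carries no extraneous Jacobian factor; Lemma \ref{stagp1} has been arranged precisely to make this compatibility transparent. The $\sl_2$-triple contraction, while standard, should also be stated carefully for symmetric pairs over a general local field of characteristic zero.
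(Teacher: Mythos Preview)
Your overall strategy---Jordan decomposition, Harish-Chandra descent at the semisimple part $s$, and the inductive hypothesis via Lemmas \ref{sis}, \ref{hcc}, \ref{stagp1}---matches the paper's. The execution differs, and the paper's route is shorter.

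The paper does not use a Luna--type slice plus an $\sl_2$-contraction. Instead it introduces the explicit open set
\[
  \v(E_s)^\circ=\{y\in\v(E_s)\mid J(y)\neq 0\},
\]
where $J(y)$ is the determinant of $\ad(y)^2$ on $\v(E)/\v(E_s)$, and shows (Lemma \ref{submer}) that the action map $\breve\oH_\alpha(E)\times\v(E_s)^\circ\to\v(E)$ is a submersion whose image already contains $s+\CN_{E_s}$. The reason is elementary: for $n\in\CN_{E_s}$ the operators $\ad(s)$ and $\ad(n)$ commute on $\g(E)/\g(E_s)$ with $\ad(s)$ invertible and $\ad(n)$ nilpotent, so $J(s+n)=J(s)\neq 0$ and $s+n\in\v(E_s)^\circ$ outright. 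This eliminates your contraction step entirely. Your route via Kostant--Rallis is valid but costs more: the one-parameter subgroup $g_t=\exp(th)$ has no literal meaning over a non-archimedean $\rk$ and must be replaced by the algebraic cocharacter $\rk^\times\to\oH(E_s)$ coming from the $\SL_2$-map, as you note.

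One point you skate over: after the inductive hypothesis gives $\con^{-\xi}_{\breve\chi'}(\v(E_s))=0$, you immediately assert that the descended function on the slice $V$ vanishes. Passing from vanishing on the full space to vanishing on an $\breve\oH_{\alpha_s}(E_s)$-stable open subset is not automatic; the paper invokes it explicitly as a ``standard argument'' with a reference to \cite[Section 5.1]{JR}. In your write-up this step should be named rather than absorbed into ``vanishes by the preceding step.''
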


 Fix a semisimple element $s\in \v(E)\setminus\v(A)$.
Then we have that $\dim_\rk(A)<\dim_\rk(A_s)$ and hence
$\mathrm{sdim}(E_s)<\mathrm{sdim}(E)$. Put
\[\v(E_s)^\circ:=\{y\in \v(E_s)\mid J(y)\ne 0\},\]
where $J(y)$ is the determinant of the composition of the following $\rk$-linear  maps
\[
 \v(E)/\v(E_s)\xrightarrow{x\mapsto [x,y]} \h(E)/\h(E_s)\xrightarrow{x\mapsto [x,y] } \v(E)/\v(E_s).
\]
Note that the function  $J$ is $\breve\oH_{\alpha_s}(E_s)$-invariant and thus  $\v(E_s)^\circ$ is a $\breve\oH_{\alpha_s}(E_s)$-stable open subset of $\v(E_s)$, where $\alpha_s$ denotes the image of $\alpha$ under the inclusion map $A\rightarrow A_s$, as before.
Let $\breve{\oH}_\alpha(E)$ act on $\breve{\oH}_\alpha(E)\times \v(E_s)^\circ$ via the left multiplication on the first factor.
Define an $\breve{\oH}_\alpha(E)$-equivariant map
\begin{equation}\label{pis}
\breve{\oH}_\alpha(E)\times \v(E_s)^\circ\rightarrow \v(E),\quad (g,y)\mapsto g.y.
\end{equation}
\begin{lemd}\label{submer}
The map \eqref{pis} is a submersion, and its image contains  $s+\CN_{E_s}$.
\end{lemd}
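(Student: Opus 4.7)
The plan is to verify both claims by computing the differential of \eqref{pis} and by a Jordan-decomposition argument on the quotient spaces $\v(E)/\v(E_s)$ and $\h(E)/\h(E_s)$.

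For the submersion claim, I would work at an arbitrary point $(g_0,y_0)$ with $y_0\in \v(E_s)^\circ$. Identifying the tangent space of $\breve{\oH}_\alpha(E)$ at $g_0$ with $\h(E)$ via left-invariant vector fields and left-translating the answer back to $\v(E)$ by $\Ad(g_0^{-1})$, the image of the differential of \eqref{pis} becomes $[\h(E),y_0]+\v(E_s)$. Since $y_0\in \v(E_s)$ and $\h(E_s)\subset \End_{A_s}(E)$, one has $[\h(E_s),y_0]\subset \v(E_s)$, so there is a well-defined induced map $\h(E)/\h(E_s)\to \v(E)/\v(E_s)$. The hypothesis $J(y_0)\ne 0$ says the composition defining $J$ is an isomorphism; in particular its second arrow---which is precisely this induced map---is surjective. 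Hence $[\h(E),y_0]+\v(E_s)=\v(E)$, and \eqref{pis} is a submersion at $(g_0,y_0)$.

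For the inclusion of $s+\CN_{E_s}$ into the image of \eqref{pis}, it suffices to check that $s+n\in \v(E_s)^\circ$ for every $n\in \CN_{E_s}$, since then $(1,s+n)$ lies in the source of \eqref{pis} and is mapped to $s+n$. Because $s$ is semisimple and $A_s$ is generated over $A$ by $s$, one has $\v(E_s)=\{x\in \v(E):[s,x]=0\}$ and similarly for $\h$; consequently $\ad(s)$ is semisimple and acts invertibly on both $\v(E)/\v(E_s)$ and $\h(E)/\h(E_s)$. The element $n\in \v(E_s)$ commutes with $s\in A_s$, so $\ad(s)$ and $\ad(n)$ commute on $\End(E)$, while $\ad(n)$ is nilpotent there. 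Writing $\ad(s+n)=\ad(s)\bigl(1+\ad(s)^{-1}\ad(n)\bigr)$ exhibits $\ad(s+n)$ as the product of an invertible operator and a unipotent one on each of the two quotients, hence invertible on each. This yields $J(s+n)\ne 0$.

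The only real bookkeeping point is to verify, via $\bar s=-s$ and the derivation rule for the bar involution, that $\ad(s)$ interchanges $\h(E)$ and $\v(E)$, so that both arrows in the definition of $J$ actually land in the claimed quotient spaces and $J(y)$ makes sense as a scalar. Once that grading check is in place, both assertions reduce to the standard fact that a semisimple element acts invertibly on the quotient of a (symmetric-pair) Lie algebra by its own centralizer, together with the Fitting-lemma absorption of the commuting nilpotent perturbation.
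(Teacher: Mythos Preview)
Your argument is correct and is exactly the approach the paper has in mind; the paper's own proof is a two-line hint recording only that $\g(E)=\h(E)\oplus\v(E)$ and that the centralizer of $s$ in $\g(E)$ equals $\g(E_s)=\h(E_s)\oplus\v(E_s)$, leaving the differential computation and the Jordan-decomposition step to the reader. You have supplied precisely those details.

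One small point of phrasing to tighten: since $s,n\in\v(E)$, the operators $\ad(s)$ and $\ad(n)$ do not act \emph{on} each of $\v(E)/\v(E_s)$ and $\h(E)/\h(E_s)$ separately but rather interchange them, so the factorization $\ad(s+n)=\ad(s)(1+\ad(s)^{-1}\ad(n))$ is not literally an identity of endomorphisms of either quotient. The clean way to run your argument is on the full quotient $\g(E)/\g(E_s)$: there $\ad(s)$ is invertible (semisimple with kernel exactly $\g(E_s)$), $\ad(n)$ is a commuting nilpotent, so $\ad(s+n)$ is invertible. Since $\ad(s+n)$ swaps the two graded pieces, its square preserves $\v(E)/\v(E_s)$ and is invertible there, and that square is exactly the map whose determinant is $J(s+n)$.
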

\begin{proof}
The lemma easily follows from the facts that
\[
  \g(E)=\h(E)\oplus \v(E),
\]
and that the centralizer of $s\in \v(E)$ in $\g(E)$
equals
\[
  \g(E_s)=\h(E_s)\oplus \v(E_s).
\]

\end{proof}

Note that $\breve\oH_{\alpha_s}(E_s)$ equals the stabilizer of $s$ in $\breve\oH_{\alpha}(E)$ under the action \eqref{gactionv}.
Thus the submersion \eqref{pis}  yields a well-defined injective restriction map (\cf \cite[Lemma 2.7]{JSZ})
\[\con_{\breve \chi}^{-\xi}(\v(E))\rightarrow \con_{{\breve \chi}_s}^{-\xi}(\v(E_s)^\circ),\]
where $\breve \chi_s$ denotes the restriction of $\breve \chi$ to  $\breve\oH_{\alpha_s}(E_s)$.
Lemma \ref{stagp1} and assumption \eqref{indu} imply that
\[
\con_{{\breve \chi}_s}^{-\xi}(\v(E_s))=0.
\]
By a standard argument (\cf \cite[Section 5.1]{JR}), this implies that
\[
\con_{{\breve \chi}_s}^{-\xi}(\v(E_s)^\circ)=0.
\]
Thus  every $f\in \con_{\breve \chi}^{-\xi}(\v(E))$ vanishes on the image of \eqref{pis}, which contains $s+\CN_{E_s}$ by Lemma \ref{submer}.
This completes the proof of Proposition \ref{reduction} by the following lemma.

\begin{lemd}
There is a decomposition
\[
\v(E)=\bigsqcup_{s\textrm{ is a semisimple element of $\v(E)$}} (s+\CN_{E_s}).
\]
\end{lemd}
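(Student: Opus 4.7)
The plan is to invoke the Jordan--Chevalley decomposition of elements of $\v(E)$ viewed as $\rk$-linear operators on $E$ and to track this decomposition through the various structures at hand. Given $x\in\v(E)$, write $x=s+n$ for its Jordan--Chevalley decomposition in $\End_\rk(E)$, with $s$ semisimple, $n$ nilpotent, and $[s,n]=0$; since $\rk$ has characteristic zero, both $s$ and $n$ lie in $\rk[x]$ and have no constant term, so they commute with every $\rk$-linear endomorphism that commutes with $x$.

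First I would establish that $s,n\in\v(E)$. Commutation with the scalar multiplications by $A$ puts $s,n\in\End_A(E)$. The $\rk$-algebra anti-involution $\tau_E$ from \eqref{tau} satisfies $x^{\tau_E}=-x$; applying $\tau_E$ to $x=s+n$ produces another expression as a commuting sum of a semisimple and a nilpotent operator, so uniqueness of Jordan--Chevalley forces $s^{\tau_E}=-s$ and $n^{\tau_E}=-n$, placing $s,n\in\g(E)$. The bar involution $y\mapsto\bar y$ of $\End_\rk(E)$ is a $\rk$-algebra automorphism preserving the grading, and it sends $x$ to $-x$; the same uniqueness argument yields $\bar s=-s$ and $\bar n=-n$. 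Hence $s,n\in\v(E)$, and $s$ is semisimple as a $\rk$-linear operator on $E$.

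Next I would show $n\in\v(E_s)$; since $n$ is $\rk$-linearly nilpotent, this forces $n\in\CN_{E_s}$. Because $n$ commutes with both $s$ and the image of $A$ in $\End_A(E)$, it commutes with the $\rk$-subalgebra $A_s$ they generate, so $n\in\End_{A_s}(E)$. A short calculation with the trace identity defining $\la\,,\,\ra_{E_s}$ shows that the anti-involution $\tau_{E_s}$ on $\End_{A_s}(E)$ is the restriction of $\tau_E$, so $n^{\tau_{E_s}}=-n$; and the grading on $E_s$ coincides with that on $E$ by definition, so $\bar n=-n$ still places $n$ in the degree-one component. This yields $x\in s+\CN_{E_s}$. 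The identification of $\tau_{E_s}$ with $\tau_E$ on $\End_{A_s}(E)$ is the only point with any technical content; everything else is routine tracking of Jordan--Chevalley components.

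For disjointness, suppose $s_1+n_1=s_2+n_2$ with each $s_i$ semisimple in $\v(E)$ and $n_i\in\CN_{E_{s_i}}$. Then $n_i\in\End_{A_{s_i}}(E)$ commutes with $s_i\in A_{s_i}$, so each $s_i+n_i$ is a Jordan--Chevalley decomposition of the common element in $\End_\rk(E)$; uniqueness there forces $s_1=s_2$ and $n_1=n_2$, completing the proof.
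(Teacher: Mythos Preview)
Your proof is correct and follows precisely the route the paper indicates: the paper's proof is the single sentence ``This easily follows from the Jordan Decomposition Theorem for the Lie algebra $\g(E)$ of $\oG(E)$,'' and what you have written is a careful unpacking of that sentence. Your verification that $\tau_{E_s}$ agrees with the restriction of $\tau_E$ on $\End_{A_s}(E)$ (via the nondegeneracy of the trace form on the semisimple algebra $A_s$) is exactly the small technical point the paper suppresses, and your disjointness argument via uniqueness of Jordan--Chevalley is the intended one.
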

\begin{proof}
This easily follows from the Jordan Decomposition Theorem for the Lie algebra $\g(E)$ of $\oG(E)$.
\end{proof}
\subsection{Reduction within the null cone}
Let $V=V_0\oplus V_1$ be a $\Z/2\Z$-graded finite dimensional vector space over $\rk$ with
\[
n:=\dim V_0=\dim V_1\geq 1.
\]
Put
\[\v:=\Hom(V_1,V_0)\oplus \Hom(V_0,V_1)\ \text{ and }\ \h:=\End(V_0)\oplus \End(V_1),\]
which are the odd and even parts  of the $\Z/2\Z$-graded algebra $\End(V)$, respectively.
Set
\begin{equation*}\label{isoggg}
H:=\GL(V_0)\times \GL(V_1)\cong\GL_n(\rk)\times \GL_n(\rk),
\end{equation*}
 which acts naturally on $\v$.
Denote by
\[
\CN_\v:=\{(x,y)\in \v\mid x\circ y: V_0\rightarrow V_0\textrm{ is a nilpotent operator}\}
\]
 the nilpotent cone in $\v$.
We shall prove the following result in this subsection.

\begin{prpd}\label{reduce2}
Let $\gamma$ be a good character of $\GL_n(\rk)$ as in the Introduction, and view $\gamma\otimes \gamma^{-1}$ as a character of $H$ via the isomorphism in \eqref{isoggg}. Let $f$ be a  $\gamma\otimes \gamma^{-1}$-equivariant tempered
generalized function on $\v$ such that  both $f$ and its Fourier transform $\mathcal F(f)$ are supported in $\CN_\v$. Then $f$ is the zero function.
\end{prpd}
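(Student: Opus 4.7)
The plan is to combine a stratification of $\CN_\v$ by the rank of the $x$-component with an induction on $n$ and a Fourier-theoretic homogeneity argument; the ``good character'' hypothesis enters precisely to rule out the surviving contributions.

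First, I would stratify $\CN_\v = \bigsqcup_{r=0}^{n} \CN_\v^{(r)}$, where $\CN_\v^{(r)} := \{(x,y) \in \CN_\v : \rank x = r\}$. Each $\CN_\v^{(r)}$ is $H$-stable and locally closed. Via the standard Bernstein-type filtration of tempered generalized functions supported on a closed subvariety (as used in \cite{AG}), it suffices to analyze $f$ modulo supports in smaller strata, namely to handle a single $r$ at a time. The Fourier transform, being $H$-equivariant for the invariant pairing on $\v$, interchanges the roles of $x$ and $y$ in a controlled way, so combining with the $\mathcal F(f)$-support condition couples strata across the two coordinates.

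Second, proceeding by induction on $n$: for $r < n$, the open part of $\CN_\v^{(r)}$ fibers $H$-equivariantly over a smaller analogous problem for subspaces $V_0' \subset V_0$, $V_1' \subset V_1$ of dimension $r$, with fiber a product of a parabolic stabilizer and a normal bundle whose Fourier-dual is a similar smaller space. Tracking the modular and normal-bundle characters, the equivariance of $f$ restricts to an equivariance on the smaller problem under a character of the form $\gamma' \otimes \gamma'^{-1}$ for some character $\gamma'$ of $\GL_r(\rk)$; the definition of ``good''---with exponents $|r'| \leq n$ (which accommodates all $r' \leq r$) and $m \leq 2n^2$---is designed so that $\gamma'$ remains good, allowing the inductive hypothesis to kill the contribution.

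Third, for the top stratum $r = n$, the map $(x, y) \mapsto (x, yx)$ identifies $\CN_\v^{(n)}$ with $\GL(V_1, V_0) \times \CN_{\End(V_1)}$. The $H$-action becomes $(g_0, g_1).(x, N) = (g_0 x g_1^{-1},\, g_1 N g_1^{-1})$; absorbing $g_0$ via the free $x$-coordinate reduces the question to a twisted-equivariance statement on $\CN_{\End(V_1)}$ under $\GL(V_1)$-conjugation, with a specific non-trivial character derived from $\gamma$. By Theorem \ref{tlin}(a), any conjugation-invariant generalized function on $\CN_{\End V_1}$ agrees with its transpose; combined with the Fourier-support hypothesis---which translates via central scalings into a fixed homogeneity degree---the presence of a non-trivial twist forces a pseudo-algebraic identity of the form $\eta^{2r} |\cdot|^{-m}$ for some $(r,m)$ in the excluded range, contradicting the goodness of $\gamma$.

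The main obstacle will be the second step: carefully bookkeeping the modular character of the parabolic stabilizer together with the normal-bundle contributions, so that the character induced on the smaller problem really is of the form $\gamma' \otimes \gamma'^{-1}$ with $\gamma'$ good. The precise exponent bounds $|r| \leq n$ and $m \leq 2n^2$ in the definition of ``good'' are chosen precisely to accommodate all characters arising from this restriction process, and verifying that these bounds are sharp enough in every case is where the combinatorial work concentrates.
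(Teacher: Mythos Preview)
Your approach differs substantially from the paper's and has a genuine gap in how the Fourier-support hypothesis enters.

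The paper does not stratify by $\rank x$ nor induct on $n$. It parametrizes nilpotent $H$-orbits $\CO\subset\CN_\v$ by graded $\sl_2$-triples $\{\mathbf h,\mathbf e,\mathbf f\}$ and combines two ingredients. First (Lemma~\ref{eigest}), restriction to the slice $\mathbf e+\v^{\mathbf f}$ shows that any $\rk^\times$-eigenvalue $\eta$ on $\con^{-\xi}_{\gamma\otimes\gamma^{-1}}(\v,\CO)$ satisfies
\[
\eta^2=\gamma_\rk^{-\tr(\widehat{\mathbf h})}\,|\cdot|^{\tr((2-\mathbf h)|_{\v^{\mathbf f}})}\,\kappa
\]
with $\kappa$ pseudo-algebraic. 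Second, the homogeneity theorem Proposition~\ref{homcri} (quoted from \cite{AG}) says that whenever both $f$ and $\CF(f)$ are supported in the quadric $Z(Q)\supset\CN_\v$, the space $L_\gamma$ carries a $\rk^\times$-eigenvalue of the specific form $\kappa'^{-1}|\cdot|^{\dim\v/2}$. Comparing the two, together with the explicit ranges $\tr(\widehat{\mathbf h})\in\{0,\pm2,\dots,\pm2n\}$ and $2n^2<\tr((2-\mathbf h)|_{\v^{\mathbf f}})\le 4n^2$ (Lemmas~\ref{constant}--\ref{sinequ}), forces $\gamma_\rk^{2r}|\cdot|^{-m}$ to be pseudo-algebraic for some $r\in\{\pm1,\dots,\pm n\}$ and $m\in\{1,\dots,2n^2\}$, contradicting goodness. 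The numerical bounds in the definition of ``good'' are reverse-engineered from exactly this computation.

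The gap in your outline is that the Fourier-support condition is never used decisively. The Fourier transform on $\v$ with respect to the trace form pairs $\Hom(V_1,V_0)$ with $\Hom(V_0,V_1)$; it does not ``interchange the roles of $x$ and $y$'' in a way that respects your rank-of-$x$ filtration, so your Step~1 stratification is simply not compatible with the $\CF(f)$ hypothesis. In the paper, both support hypotheses are consumed \emph{simultaneously} by Proposition~\ref{homcri}, a nontrivial Weil-representation input that singles out the homogeneity degree $|\cdot|^{\dim\v/2}$. Your Steps~1--2 invoke nothing of this kind, and in Step~3 the phrase ``translates via central scalings into a fixed homogeneity degree'' is precisely the missing lemma: you have not said what pins that degree down. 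Without it there is nothing preventing a $\gamma\otimes\gamma^{-1}$-equivariant generalized function supported on a single nilpotent orbit of some other homogeneity.

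A secondary issue: in Step~2 you assert the restricted problem carries a character $\gamma'\otimes\gamma'^{-1}$ with $\gamma'$ good for $\GL_r$. The bounds defining ``good'' are calibrated to the $\sl_2$-trace computation above, not to parabolic descent with its attendant modular and normal-bundle twists; you flag this yourself as the main obstacle, and it is not clear it can be overcome within your framework.
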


Here the Fourier transform $\CF$ is defined as in \eqref{fourt00}. When $\gamma$ is trivial, Proposition \ref{reduce2}  is proved in \cite{JR} for the non-archimedean case and in \cite{AG} for the
archimedean case. Our proof for Proposition \ref{reduce2} is similar to that in \cite{AG}.

 Write $\s$ for the Lie algebra
 $\s\l_2(\rk)$ equipped with a $\Z/2\Z$-grading $\s=\s_0\oplus \s_1$ such that
\[  \left[ \begin{array}{cc}
              1&0\\ 0&-1
              \end{array}
              \right]\in \s_0\quad \text{and}\quad  \left[\begin{array}{cc}
              0&1\\ 0&0
              \end{array}
              \right],\left[\begin{array}{cc}
              0&0\\ 1&0
              \end{array}
              \right]\in \s_1.\]
A graded $\s$-module is defined to be an $\s$-module $W$ with a  $\Z/2\Z$-grading
$W=W_0\oplus W_1$ such that
\[\s_i.W_j\subset W_{i+j},\quad i,j\in \Z/2\Z.\]
For every non-negative integer $\lambda$ and every $\omega\in \Z/2\Z$, we write $V_\lambda^\omega$ for the
graded $\s$-module such that it is the irreducible highest weight module with highest weight $\lambda$  as a $\s\l_2(\rk)$-module, and
that the highest weight vector has grading $\omega$. Note that the graded $\s$-module $V_\lambda^\omega$ is graded-irreducible, namely it is nonzero and has no nonzero proper  graded submodule.  Conversely, every
graded-irreducible  $\s$-module is isomorphic to $V_\lambda^\omega$ for a uniquely determined pair $(\lambda, \omega)$. Moreover, every graded $\s$-module is a direct sum of graded-irreducible  $\s$-modules.

Let $\mathcal O$ be an $H$-orbit in $\CN_\v$.
Recall that every $\mathbf e\in \mathcal O$ can be extended to a graded $\s\l_2$-triple $\{\mathbf{h,e,f}\}$ in the sense that (\cf \cite[Proposition 4]{KR})
\begin{equation}\label{sl2t}
[\mathbf h,\mathbf e]=2\mathbf e,\ [\mathbf h,\mathbf f]=-2\mathbf f,\ [\mathbf e,\mathbf f]=\mathbf h,\  \mathbf f\in \CN_\v \ \, \text{and}\ \, \mathbf h \in \h.
\end{equation}
Via this triple, $V$ becomes a graded $\s$-module. Decompose this
 graded $\s$-module as
\[V=V_{\lambda_1}^{\omega_1}\oplus V_{\lambda_2}^{\omega_2}\oplus\cdots \oplus V_{\lambda_d}^{\omega_d}, \qquad(d\geq 1).\]


Write
\[\mathbf h=(\mathbf h_0,\mathbf h_1)\in \h=\End(V_0)\times \End(V_1),\]
and set
\[\widehat{\mathbf h}:=(\mathbf h_0,-\mathbf h_1)\in \h.\]
The following lemma is easy to check.
\begin{lemd}\label{constant} For each $i=1,2,\cdots,d$, one has that
\begin{equation*}\tr(\widehat{\mathbf{h}}|_{V_{\lambda_i}^{\omega_i}})=
\begin{cases}
0,\quad &\text{ if $\lambda_i$ is even};\\
\lambda_i+1,\quad &\text{ if $\lambda_i$ is odd and } \omega_i=0;\\
-\lambda_i-1,\quad &\text{ if $\lambda_i$ is odd and } \omega_i=1.
\end{cases}
\end{equation*}
In particular, one has that
\[\tr(\widehat{\mathbf{h}})\in\{0,\pm 2,\cdots,\pm 2n\}.\]
\end{lemd}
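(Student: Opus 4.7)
The plan is to read off the weight decomposition of each graded $\s$-module $V_\lambda^\omega$ explicitly, then sum the weight contributions with the appropriate sign coming from the grading to get $\tr(\widehat{\mathbf h}|_{V_\lambda^\omega})$.

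First I would note that since $\mathbf h\in \s_0$ and $\mathbf f\in\s_1$ (by the grading on $\s$ given right before the lemma), the weight space of weight $\lambda-2k$ in $V_\lambda^\omega$ is spanned by $\mathbf f^k$ applied to the highest weight vector, and hence lies in degree $\omega+k \pmod 2$. Consequently, on a weight vector $v_\mu$ of weight $\mu=\lambda-2k$ and grading $\omega+k$, the endomorphism $\widehat{\mathbf h}=(\mathbf h_0,-\mathbf h_1)$ acts by $(-1)^{\omega+k}\mu$. Summing over $k=0,1,\dots,\lambda$ gives
\[
\tr(\widehat{\mathbf h}|_{V_\lambda^\omega})=(-1)^\omega\sum_{k=0}^\lambda (-1)^k (\lambda-2k).
\]

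The next step is a routine evaluation of $S(\lambda):=\sum_{k=0}^\lambda (-1)^k (\lambda-2k)$. When $\lambda$ is even the terms pair up as $(\lambda-2k)+(-1)(\lambda-2(k+1))=2$ and the symmetric cancellation about the middle gives $S(\lambda)=0$; when $\lambda$ is odd one pairs the indices $k$ and $\lambda-k$ (which then contribute the same sign and the same absolute value), obtaining $S(\lambda)=\lambda+1$. Plugging this back into the formula reproduces the three cases in the lemma.

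For the ``in particular'' clause, observe that $\dim V_{\lambda_i}^{\omega_i}=\lambda_i+1$, and $\sum_{i=1}^d (\lambda_i+1)=\dim_\rk V=2n$. By the case analysis just proved, each summand $\tr(\widehat{\mathbf h}|_{V_{\lambda_i}^{\omega_i}})$ is either $0$ (when $\lambda_i$ is even) or $\pm(\lambda_i+1)$ (when $\lambda_i$ is odd); in every case the summand is an even integer of absolute value at most $\lambda_i+1$. Therefore $\tr(\widehat{\mathbf h})=\sum_i \tr(\widehat{\mathbf h}|_{V_{\lambda_i}^{\omega_i}})$ is an even integer with
\[
|\tr(\widehat{\mathbf h})|\le \sum_{i=1}^d(\lambda_i+1)=2n,
\]
which gives $\tr(\widehat{\mathbf h})\in\{0,\pm 2,\dots,\pm 2n\}$.

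I expect the whole argument to be essentially bookkeeping; the only place where one needs to be slightly careful is identifying the grading of the weight vectors $\mathbf f^k.v_\lambda$, which is forced by $\mathbf f\in\s_1$ and the convention that the highest weight vector has grading $\omega$. No step should present a genuine obstacle.
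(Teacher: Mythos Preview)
Your proof is correct and is exactly the kind of direct weight-space computation the paper has in mind; the paper itself gives no proof and simply says the lemma ``is easy to check.'' The only cosmetic point is that your sentence for the even-$\lambda$ case mixes two arguments (consecutive pairing giving $2$, versus symmetric cancellation about the middle); the clean justification is the second one: for $\lambda$ even, the substitution $k\mapsto \lambda-k$ sends $(-1)^k(\lambda-2k)$ to its negative, so $S(\lambda)=0$.
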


For each $1\le i,j\le d$, set
\begin{equation*}\label{comten0}
m_{i,j}:=\tr((2-\mathbf h)|_{\Hom(V_{\lambda_i}^{\omega_i},V_{\lambda_j}^{\omega_j})_1^{\mathbf f}})
+\tr((2-\mathbf h)|_{\Hom(V_{\lambda_j}^{\omega_j},V_{\lambda_i}^{\omega_i})_1^{\mathbf f}})-(\lambda_i+1)(\lambda_j+1).
\end{equation*}
Here  $\Hom(V_{\lambda_j}^{\omega_j},V_{\lambda_i}^{\omega_i})$ is obviously viewed as a graded  $\s$-module, and $\Hom(V_{\lambda_j}^{\omega_j},V_{\lambda_i}^{\omega_i})_1^{\mathbf f}$ is the space of vectors in its odd part which are annihilated by $\mathbf f$. Similar notation will be used without further explanation.

\begin{lemd}\label{comten} For each $1\le i,j\le d$, one has that
\begin{equation*}m_{i,j}=
\begin{cases}
\min\{\lambda_i,\lambda_j\}+1,& \text{ if } \lambda_i\not\equiv \lambda_j\quad (\mathrm{mod}\ 2);\\
2\min\{\lambda_i,\lambda_j\}+2,&\text{ if } \lambda_i\equiv\lambda_j\equiv 1\quad (\mathrm{mod}\ 2) \text{ and }\omega_i=\omega_j;\\
0,& \text{ if } \lambda_i\equiv\lambda_j\equiv 1\quad (\mathrm{mod}\ 2) \text{ and }\omega_i\ne \omega_j;\\
-|\lambda_i-\lambda_j|-1,&\text{ if } \lambda_i\equiv\lambda_j\equiv 0\quad (\mathrm{mod}\ 2) \text{ and }\omega_i=\omega_j;\\
\lambda_i+\lambda_j+3, &\text{ if } \lambda_i\equiv\lambda_j\equiv 0\quad (\mathrm{mod}\ 2) \text{ and }\omega_i\ne\omega_j.
\end{cases}
\end{equation*}
\end{lemd}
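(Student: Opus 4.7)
The proof of Lemma \ref{comten} reduces to a direct but somewhat involved bookkeeping computation on graded $\mathfrak{s}$-modules, so the plan is to set up the right decomposition and then case-split on parities. First, I would fix a highest weight vector $v_i$ of $V_{\lambda_i}^{\omega_i}$ of grading $\omega_i$ and put $v_i^{(k)} := \mathbf{f}^k v_i$, a weight $\lambda_i - 2k$ vector of grading $\omega_i + k \pmod 2$ (and similarly $w_j^{(l)}$ for $V_{\lambda_j}^{\omega_j}$). The rank-one maps $E_{k,l}$ sending $v_i^{(k)}\mapsto w_j^{(l)}$ and killing all other $v_i^{(k')}$ form a basis of $\Hom(V_{\lambda_i}^{\omega_i},V_{\lambda_j}^{\omega_j})$. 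Each $E_{k,l}$ has $\mathbf{h}$-weight $(\lambda_j-\lambda_i)+2(k-l)$ and grading $\omega_j-\omega_i+k+l\pmod 2$.

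Next, I would invoke the Clebsch--Gordan decomposition
\[
\Hom(V_{\lambda_i}^{\omega_i},V_{\lambda_j}^{\omega_j}) \;\cong\; \bigoplus_{r=0}^{\min(\lambda_i,\lambda_j)} U_r,
\]
where $U_r$ is an irreducible $\mathfrak{sl}_2$-submodule of highest weight $\lambda_i+\lambda_j-2r$. Since $\ker\mathbf{f}$ consists of lowest-weight vectors, the odd part of $\ker\mathbf{f}$ is a direct sum, over those $r$ for which the lowest-weight line of $U_r$ lies in the odd part, of one-dimensional spaces on which $2-\mathbf{h}$ acts by $2+\lambda_i+\lambda_j-2r$. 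The key observation is that the lowest-weight vector of $U_r$ is a linear combination of $E_{k,l}$ with $l-k = \lambda_j - r$ (hence $k+l\equiv \lambda_j-r\pmod 2$), so it is homogeneous with grading $\omega_j-\omega_i+\lambda_j+r\pmod 2$. Therefore
\[
\tr\!\big((2-\mathbf{h})\big|_{\Hom(V_{\lambda_i}^{\omega_i},V_{\lambda_j}^{\omega_j})_1^{\mathbf f}}\big) \;=\; \sum_{\substack{0\le r\le\min(\lambda_i,\lambda_j)\\ \omega_j-\omega_i+\lambda_j+r\text{ odd}}}\!(2+\lambda_i+\lambda_j-2r),
\]
and an analogous formula holds after swapping the roles of $i$ and $j$.

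Adding the two traces, let $a_r := \omega_j-\omega_i+\lambda_j+r$ and $b_r := \omega_i-\omega_j+\lambda_i+r$; then $b_r \equiv a_r + \lambda_i+\lambda_j\pmod 2$. So the coefficient of $(2+\lambda_i+\lambda_j-2r)$ in the combined sum is $1$ for every $r$ when $\lambda_i+\lambda_j$ is odd, and is $2[a_r\text{ odd}]$ when $\lambda_i+\lambda_j$ is even. Subtracting $(\lambda_i+1)(\lambda_j+1)$ from these partial sums and splitting on the five cases of the lemma (with, say, $\lambda_i\le \lambda_j$ assumed by symmetry) is then a straightforward arithmetic exercise: each case reduces to summing an arithmetic progression over the appropriate parity class of $r\in\{0,1,\ldots,\min(\lambda_i,\lambda_j)\}$ and simplifying.

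The substantive step, and the one I would write out carefully, is the determination that the lowest-weight vectors in the Clebsch--Gordan components $U_r$ are homogeneous of the grading stated above; the remaining case analysis is mechanical and only requires that the parities track correctly. No external input beyond Clebsch--Gordan and the definition of the grading on $\Hom$ is needed.
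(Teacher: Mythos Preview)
Your approach is correct and essentially the same as the paper's. The paper's proof records the graded Clebsch--Gordan decomposition
\[
(V_{\lambda_i}^{\omega_i})^*\otimes V_{\lambda_j}^{\omega_j}\;=\;\bigoplus_{l=0}^{\min\{\lambda_i,\lambda_j\}} V_{\lambda_i+\lambda_j-2l}^{\,\omega_i+\lambda_i+\omega_j-l}
\]
together with the formula $\tr\big((2-\mathbf h)\big|_{(V_\lambda^\omega)_1^{\mathbf f}}\big)=\lambda+2$ when $\lambda+\omega$ is odd and $0$ otherwise, and then leaves the case split as routine; you instead compute the grading of each lowest-weight line directly from the basis $E_{k,l}$, arriving at the same parity condition $\omega_i+\omega_j+\lambda_j+r$ odd and the same summands $2+\lambda_i+\lambda_j-2r$. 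The two presentations differ only in packaging, not in substance.
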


\begin{proof}This Lemma is similar to \cite[Lemma 7.7.9]{AG} and its proof is also similar. The numbers $m_{i,j}$  can be
computed directly by the facts that
\begin{equation*}
\tr((2-\mathbf h)|_{(V_\lambda^\omega)_1^{\mathbf f}})=
\begin{cases} \lambda+2,\ &\text{if}\ \lambda+\omega \text{ is odd};\\
0,\ &\text{if}\ \lambda+\omega \text{ is even},
\end{cases}
\end{equation*}
and that
\[(V_{\lambda_i}^{\omega_i})^*\otimes V_{\lambda_j}^{\omega_j}
=\oplus_{l=0}^{\min\{\lambda_i,\lambda_j\}} V_{\lambda_i+\lambda_j-2l}^{\omega_i+\lambda_i+\omega_j-l}.
\]
\end{proof}

Under the adjoint action of the triple $\{\mathbf h,\mathbf e,\mathbf f\}$,
$\End(V)$ becomes a graded $\s$-module with $\v$ as its odd part.
The following result is similar to \cite[Lemma 3.1]{JR} and \cite[Lemma 7.7.5]{AG}.
\begin{lemd}\label{sinequ} One has that
\[2n^2<\tr((2-\mathbf h)|_{\v^{\mathbf f}})\le 4n^2.\]
\end{lemd}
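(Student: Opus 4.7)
The proof rests on the identity $\tr((2-\mathbf{h})|_{\v^{\mathbf{f}}}) = \tfrac12\sum_{i,j}m_{i,j} + 2n^2$, which reduces the claim to $0 < \sum_{i,j}m_{i,j} \le 4n^2$. To derive this identity, set
\[
T_{i,j} := \tr\bigl((2-\mathbf{h})|_{\Hom(V_{\lambda_i}^{\omega_i},V_{\lambda_j}^{\omega_j})_1^{\mathbf{f}}}\bigr),
\]
so that $\tr((2-\mathbf{h})|_{\v^{\mathbf{f}}}) = \sum_{i,j} T_{i,j}$; using the definition $m_{i,j} = T_{i,j} + T_{j,i} - (\lambda_i+1)(\lambda_j+1)$ from Lemma~\ref{comten} together with $\sum_i(\lambda_i+1) = \dim V = 2n$, the identity follows after a short algebraic manipulation.

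For the upper bound, I would argue summand-by-summand in the graded $\s$-module decomposition $\End(V) = \bigoplus_k V_{\mu_k}^{\omega_k}$. For each irreducible $V_\mu^\omega$, its contribution to $\tr((2-\mathbf{h})|_{\v^{\mathbf{f}}})$ is $\mu+2$ if $\mu+\omega$ is odd and $0$ otherwise, while its contribution to $\tr((2-\mathbf{h})|_\v) = \tr((2-\mathbf{h})|_{(V_\mu^\omega)_1})$ is the sum of $2-(\mu-2k)$ over those $k\in\{0,\ldots,\mu\}$ with $\omega+k$ odd. A short case analysis on the parities of $\mu$ and $\omega$ shows that the former never exceeds the latter in any summand, yielding $\tr((2-\mathbf{h})|_{\v^{\mathbf{f}}}) \le \tr((2-\mathbf{h})|_\v)$. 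The right-hand side equals $2\dim\v - \tr(\mathbf{h}|_\v) = 4n^2 - 0$; the vanishing $\tr(\mathbf{h}|_\v)=0$ follows from the formula $\tr(\mathbf{h}|_{\Hom(V_a,V_b)}) = \tr(\mathbf{h}_b)\dim V_a - \tr(\mathbf{h}_a)\dim V_b$ applied to the two summands $\Hom(V_1,V_0)$ and $\Hom(V_0,V_1)$ of $\v$.

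For the strict lower bound, I would apply Lemma~\ref{comten} directly and partition the indices as $O^\omega := \{i:\lambda_i\text{ odd},\omega_i=\omega\}$ and $E^\omega := \{i:\lambda_i\text{ even},\omega_i=\omega\}$. Comparing graded dimensions of $V$ forces the balance $|E^0|=|E^1|=:e$. Only pairs with $\lambda_i,\lambda_j$ both even and $\omega_i=\omega_j$ yield negative $m_{i,j}$; the elementary bound $|\lambda_i-\lambda_j|\le\lambda_i+\lambda_j$ shows they are dominated by the cross-$\omega$ even contributions $\lambda_i+\lambda_j+3$, the two classes combined producing at least $4e^2$. Pairs in $O^\omega\times O^\omega$ contribute at least $2(|O^0|^2+|O^1|^2)$ from the constant term in $2\min+2$, and mixed-parity pairs contribute nonnegatively. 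Since $V\ne 0$, at least one of $e, |O^0|, |O^1|$ is positive, so $\sum_{i,j}m_{i,j}\ge 2 > 0$, giving the strict inequality.

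The main obstacle is the case analysis underlying the upper bound: verifying the per-summand inequality in each of the four parity/grading configurations for $(\mu,\omega)$ is straightforward but must be done carefully. Once these arithmetic checks are completed and the trace of $\mathbf{h}$ on $\v$ is shown to vanish, both bounds of the lemma follow.
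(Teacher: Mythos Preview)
Your proposal is correct. The identity $\tr((2-\mathbf h)|_{\v^{\mathbf f}})=\tfrac12\sum_{i,j}m_{i,j}+2n^2$ checks out (since $\sum_i(\lambda_i+1)=2n$), your lower-bound argument via Lemma~\ref{comten} and the balance $|E^0|=|E^1|$ is exactly in the spirit of the paper's reference to \cite[Lemma~7.7.5]{AG}, and your per-summand parity check for the upper bound goes through in all four cases (with equality when $\mu$ is even and $\omega=1$).

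One small difference worth noting: for the upper bound you bypass Lemma~\ref{comten} entirely, instead comparing $\tr((2-\mathbf h)|_{\v^{\mathbf f}})$ directly with $\tr((2-\mathbf h)|_{\v})=2\dim\v=4n^2$ summand by summand in the graded $\s$-decomposition of $\End(V)$. The paper's proof (deferred to \cite{AG}) presumably argues both inequalities through the explicit $m_{i,j}$ table. Your route is arguably cleaner for the upper bound, since the pairwise inequality $m_{i,j}\le(\lambda_i+1)(\lambda_j+1)$ actually fails (e.g.\ $\lambda_i=\lambda_j=0$, $\omega_i\ne\omega_j$ gives $m_{i,j}=3>1$), so the $m_{i,j}$-based upper bound requires a more global argument anyway. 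Either way, the substance is the same.
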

\begin{proof}The proof of this inequality
 is the same as that of \cite[Lemma 7.7.5]{AG} by using Lemma \ref{comten}.
\end{proof}

Let $\gamma$ be a character of $\GL_n(\rk)$ and let $\gamma_\rk$ be  the character  of $\rk^\times$ such that $\gamma=\gamma_\rk\circ \det$. View $\gamma\otimes \gamma^{-1}$ as a character of $H$. Denote by  $\con^{-\xi}(\v,\CO)$ the space of
tempered generalized functions on  $\v\setminus(\partial \CO)$ with support in $\CO$, and denote by
$\con^{-\xi}_{\gamma\otimes \gamma^{-1}}(\v,\CO)$ its subspace of   $\gamma\otimes \gamma^{-1}$-equivariant
elements, where $\partial \CO$ denotes the complement of $\CO$ in its closure in $\v$. We will use similar notation without further explanation.

Let $\rk^\times$ act on $\con^{-\xi}(\v)$ by
\begin{equation}\label{dial}
  (t.f)(x,y)=f(t^{-1}x, t^{-1}y),\quad t\in \rk^\times, \,f \in \con^{-\xi}(\v).
\end{equation}
Note that the orbit $\CO$ is invariant under dilation, and thus  $\rk^\times$ acts on  $\con^{-\xi}_{\gamma\otimes \gamma^{-1}}(\v,\CO)$
as in \eqref{dial}.

\begin{lemd}\label{eigest}
 Let $\eta:\rk^\times\rightarrow \C^\times$ be an eigenvalue for the action of $\rk^\times$ on $\con^{-\xi}_{\gamma\otimes \gamma^{-1}}(\v,\mathcal O)$. Then
 \[
 \eta^2=\gamma_\rk^{-\tr(\widehat{\mathbf h})}\cdot |\\ \cdot\\ |^{\tr\left((2-\mathbf h)|_{\v^{\mathbf f}}\right)}\cdot \kappa\]
for some pseudo-algebraic character $\kappa$ of $\rk^\times$.
\end{lemd}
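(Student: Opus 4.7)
The plan is to combine the graded Jacobson--Morozov triple $\{\mathbf{h},\mathbf{e},\mathbf{f}\}$ with dilations to build a one-parameter subgroup of $H\times \rk^\times$ that stabilises $\mathbf{e}$, and then to evaluate its action on $f$ in two independent ways: via the equivariance hypothesis and via the transverse geometry of $\CO$ at $\mathbf{e}$.

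First I would use that $\mathbf{h}\in\h$ acts with integer eigenvalues on $V$ (by $\sl_2$-representation theory) to define the algebraic cocharacter
\[
  a\colon \rk^\times\to H,\qquad a(t)|_{V^{\mathbf h}_k}=t^k\cdot \mathrm{id},
\]
whose adjoint action scales the $k$-weight subspace of $\v$ by $t^k$; in particular $\Ad(a(t))\mathbf{e}=t^2\mathbf{e}$. The combined map $\Phi(t):=(a(t^{-1}),t^2)\in H\times\rk^\times$ then fixes $\mathbf{e}$ under the combined $H$-action and dilation on $\v$. Using $\det a(t)_i=t^{\tr\mathbf h_i}$, the equivariance of $f$ gives
\[
  \Phi(t).f=\gamma_\rk\!\left(t^{-\tr\widehat{\mathbf h}}\right)\,\eta(t)^{-2}\,f,
\]
the second factor coming from the convention $(t.f)(x)=f(t^{-1}x)$ in \eqref{dial}, which translates $t.f=\eta(t)f$ into $f(tx)=\eta(t)^{-1}f(x)$.

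Next I would compute the same quantity geometrically. The standard $\sl_2$-decomposition gives $\v=[\mathbf e,\h]\oplus \v^{\mathbf f}$, so $\v^{\mathbf f}$ serves as a transverse slice to $\CO$ at $\mathbf e$. By the usual descent for an equivariant generalized function supported on a closed orbit (\cf \cite[Lemma 2.7]{JSZ} together with the method of \cite[Section 7.7]{AG}), $f$ is determined in a neighbourhood of $\mathbf e$ by a distribution on the slice supported at the origin, which I would decompose into simultaneous $\Phi(t)$-eigenvectors of the form $\partial^{\alpha}\delta_{0}$. Since $\v^{\mathbf f}$ is the kernel of $\mathbf f$ on $\v$, all of its $\mathbf h$-weights are $\leq 0$, so $\Phi(t)$ acts on the $k$-weight subspace of the slice by $t^{2-k}$ with $2-k\geq 2$. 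This yields a determinant factor of $|t|^{-\tr((2-\mathbf h)|_{\v^{\mathbf f}})}$, and each derivative $\partial^{\alpha}\delta_{0}$ contributes an additional character of the form $\prod_j t^{(2-k_{i_j})n_{i_j}}$ (or its complex-analytic analogue $\prod_j\iota(t)^{(2-k_{i_j})n_{i_j}}\iota'(t)^{(2-k_{i_j})n'_{i_j}}$ if $\rk\cong\C$), which is pseudo-algebraic precisely because each exponent is a nonnegative integer. Altogether
\[
  \Phi(t).f = |t|^{-\tr((2-\mathbf h)|_{\v^{\mathbf f}})}\,\kappa(t)^{-1}\,f
\]
for a pseudo-algebraic character $\kappa$.

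Equating the two expressions and solving for $\eta(t)^{2}$ produces the claimed formula $\eta^{2}=\gamma_\rk^{-\tr\widehat{\mathbf h}}\cdot |\cdot|^{\tr((2-\mathbf h)|_{\v^{\mathbf f}})}\cdot\kappa$. The hard part will be making the geometric descent precise uniformly in the archimedean and non-archimedean cases and tracking the action of the stabilising torus on derivatives of $\delta_{0}$; once those are in hand, the pseudo-algebraicity of $\kappa$ is immediate from the nonpositivity of the $\mathbf h$-weights on $\v^{\mathbf f}$.
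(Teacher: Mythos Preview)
Your proposal is correct and follows essentially the same route as the paper. Both arguments integrate the graded $\sl_2$-triple to a cocharacter of $H$, pair it with dilation to get a one-parameter subgroup $T\subset H\times\rk^\times$ fixing $\mathbf e$ (your $\Phi(t)=(a(t^{-1}),t^2)$ is the paper's $(D_{t^{-1}},t^2)$), use the transverse slice $\v^{\mathbf f}$ coming from $\v=[\h,\mathbf e]\oplus\v^{\mathbf f}$, and read off the $T$-eigenvalues on distributions supported at the origin of the slice. The paper formalizes your ``geometric descent'' step as an injective restriction map
\[
 \con^{-\xi}_{(\gamma\otimes \gamma^{-1})\otimes\eta^{-1}}(\v,\CO)\hookrightarrow \con^{-\xi}_{((\gamma\otimes \gamma^{-1})\otimes\eta^{-1})|_T}(\v^{\mathbf f},\{0\}),
\]
obtained from the submersion $(H\times\rk^\times)\times\v^{\mathbf f}\to\v$, $(g,v)\mapsto g.(v+\mathbf e)$, citing \cite[Lemma~2.7]{JSZ} and \cite[Lemma~5.4]{SZ2}; this is exactly the ``hard part'' you flag, and those references handle both the archimedean and non-archimedean cases uniformly.
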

\begin{proof}
 View $\v$ as an $H\times \rk^\times$-space. Then $\CO$ is an $\oH\times\rk^\times$-orbit
and the $\eta$-eigenspace in $\con^{-\xi}_{\gamma\otimes \gamma^{-1}}(\v,\mathcal O)$ equals $\con^{-\xi}_{(\gamma\otimes \gamma^{-1})\otimes\eta^{-1}}(\v,\CO)$.
The $\s\l_2$-triple $\{\mathbf{h,e,f}\}$ integrates to an algebraic homomorphism
\[\phi: \SL_2(\rk)\rightarrow \GL(V)\]
which maps $\left[ \begin{array}{cc}
              t&0\\ 0&t^{-1}
              \end{array}
              \right]$ to an element, say $D_t$, of $H$.
Set
\begin{equation*}
  T:=\{(D_t,t^{-2})\in H\times \rk^\times\mid t\in \rk^\times\},
  \end{equation*}
which fixes the element $\mathbf e$ and stabilizes the space $\v^\mathbf f$.

By using the equality
\[\v=[\h,\mathbf e]\oplus \v^{\mathbf f},\]
we know that the map
\be\label{submer0}
  (H\times \rk^\times)\times \v^{\mathbf f}\rightarrow \v,\quad (g,v)\mapsto g.(v+\mathbf e)
  \ee
is submersive at every point of  $(H\times \rk^\times)\times \{0\}$, and $(H\times \rk^\times)\times \{0\}$ is open in the inverse image of $\CO$ under  the map \eqref{submer0}. Thus the restriction map yields an injective linear map (\cf \cite[Lemma 2.7]{JSZ} and  \cite[Lemma 5.4]{SZ2})
\[
 \con^{-\xi}_{(\gamma\otimes \gamma^{-1})\otimes\eta^{-1}}(\v,\CO)\rightarrow \con^{-\xi}_{((\gamma\otimes \gamma^{-1})\otimes\eta^{-1})|_T}(\v^{\mathbf f},\{0\}).
 \]
It is easy to see that the representation $\con^{-\xi}(\v^{\mathbf f},\{0\})$ of $T$ is completely reducible and every eighenvalue has the form
\[
(D_t, t^{-2})\mapsto   |t |^{\tr((\mathbf h-2)|_{\v^{\mathbf f}})} \kappa(t^{-1}), \quad t\in \rk^\times
\]
where $\kappa$ is a pseudo-algebraic character of $\rk^\times$. Thus the character $((\gamma^{-1}\otimes \gamma)\otimes\eta)|_T$ has this form, or equivalently,
\[
  \gamma_\rk^{-\tr(\widehat{\mathbf h})}\cdot \eta^{-2}=|\\ \cdot \\ |^{\tr((\mathbf h-2)|_{\v^{\mathbf f}})} \cdot \kappa^{-1}
\]
for some pseudo-algebraic character $\kappa$ of $\rk^\times$. This proves the lemma.

\end{proof}

Note that $\v$ is a split symmetric bilinear space under the trace form, and the associated quadratic form is
\[
  Q(x,y):=\tr(x\circ y)+\tr(y\circ x), \qquad (x,y)\in \v=\Hom(V_1,V_0)\oplus \Hom(V_0,V_1).
\]
Denote by $Z(Q)$ the zero locus of $Q$ in $\v$. Then $\CN_\v\subset Z(Q) \subset \v$.
Recall the following homogeneity result on tempered generalized functions (\cf  \cite[Theorem 5.1.7]{AG}).

\begin{prpd}\label{homcri} Let $L$ be a non-zero subspace of $\con^{-\xi}(\v,Z(Q))$ such that
for every $f\in L$, one has that $\mathcal F(f)\in L$ and $(\psi\circ Q)\cdot f\in L$ for all unitary character $\psi$ of $\rk$.
Then   $L$ is a completely reducible  $\rk^\times$-subrepresentation of $\con^{-\xi}(\v)$, and it has an  eigenvalue of the form
\begin{equation*}\label{etal}
\kappa^{-1}\cdot |\\ \cdot \\|^{\dim\v/2},
\end{equation*}
where $\kappa$ is a pseudo-algebraic character of $\rk^\times$.
\end{prpd}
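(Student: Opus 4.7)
The plan is to realize $L$ as a subrepresentation of the Weil representation $\omega$ of (the metaplectic cover of) $\SL_2(\rk)$ attached to the quadratic space $(\v,Q)$, and then to exploit the support condition $L\subset \con^{-\xi}(\v,Z(Q))$ to collapse this action to a single scalar on the dilation torus. Recall the generators of $\omega$ on tempered generalized functions on $\v$: the upper unipotent $u(t)$ (with upper-right entry $t$) acts by multiplication by the function $\psi_\rk(tQ(\cdot)/2)$; the Weyl element $w$ acts, up to a normalizing scalar, as the Fourier transform $\mathcal F$; the diagonal element $d(a)$ with entries $(a,a^{-1})$ acts as $f\mapsto \chi_V(a)\,|a|^{\dim\v/2}\,f(a\,\cdot\,)$, for a character $\chi_V$ of $\rk^\times$ determined by $Q$. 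The hypotheses on $L$ are precisely invariance under $\omega(u(t))$ for every $t\in\rk$ and under $\omega(w)$; since these generate the Weil representation, $L$ is $\omega$-stable. As $\omega(d(a))$ differs from ordinary dilation $a^{-1}.f$ only by the scalar $\chi_V(a)|a|^{\dim\v/2}$, this makes $L$ an $\rk^\times$-subrepresentation of $\con^{-\xi}(\v)$.

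Next I use the support condition: for every $f\in L$, $Q$ vanishes on the support of $f$, so $\psi_\rk(tQ(x)/2)f(x)=f(x)$, showing that the upper unipotent $U$ acts trivially on $L$ via $\omega$. Because $L$ is $\omega(w)$-stable, conjugation by $w$ forces the opposite unipotent $U^-=wUw^{-1}$ also to act trivially on $L$. Since $U$ and $U^-$ generate $\SL_2(\rk)$ as an abstract group, the entire Weil representation acts trivially on $L$ (up to the metaplectic center, which contributes only a fixed scalar). Triviality of $\omega(d(a))$ then reads $\chi_V(a)|a|^{\dim\v/2}f(ax)=f(x)$, so dilation $(t.f)(x)=f(t^{-1}x)$ acts on $L$ by the scalar $\chi_V(t)|t|^{\dim\v/2}$. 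Setting $\kappa:=\chi_V^{-1}$ exhibits this as the required eigenvalue $\kappa^{-1}\cdot|\cdot|^{\dim\v/2}$; for our split $Q$ on $\v=\Hom(V_1,V_0)\oplus\Hom(V_0,V_1)$ the character $\chi_V$ is trivial and hence pseudo-algebraic. Complete reducibility of $L$ as an $\rk^\times$-module is then automatic, since $L$ lies in a single character-isotypic component.

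The main obstacle I expect is the careful bookkeeping for the Weil representation on tempered generalized functions — precise normalizations in the Fourier and unipotent formulas, and the need to pass to the metaplectic double cover rather than $\SL_2(\rk)$ itself — together with the identification of $\chi_V$ as pseudo-algebraic, which uses the splitness of $Q$. A secondary technical point is justifying that $\omega(u(t))$ and $\omega(w)$ really do act on the space of tempered generalized functions supported on $Z(Q)$ and that the generation-by-unipotents argument is compatible with this action, rather than only on some smaller dense subspace such as the Schwartz space.
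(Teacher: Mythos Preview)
The paper does not prove this proposition; it is quoted as \cite[Theorem~5.1.7]{AG}, and the Weil-representation mechanism you outline is indeed what underlies that result. In the non-archimedean case your argument is complete and even yields the sharper conclusion that $L$ lies in the single eigenspace for $|\cdot|^{\dim\v/2}$, consistent with the trivial character being the only pseudo-algebraic one over such a field.

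The archimedean case, however, has a genuine gap at the step ``$Q$ vanishes on the support of $f$, so $\psi_\rk(tQ(x)/2)f(x)=f(x)$''. This implication is false: a tempered generalized function supported on $Z(Q)$ may involve derivatives transverse to the hypersurface, and for such $f$ the smooth function $\psi_\rk(tQ/2)$, though equal to $1$ along $Z(Q)$, does not act by the identity. In local coordinates where $Q$ is one of the variables, $e^{itQ}\cdot\partial_Q^{\,j}\delta_{Q=0}$ differs from $\partial_Q^{\,j}\delta_{Q=0}$ by lower-order normal derivatives. What the support condition actually gives is that multiplication by $Q$ is nilpotent on each $f\in L$, so the $U$-action is \emph{unipotent} rather than trivial; conjugating by $w$, the same holds for $U^-$. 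Two locally nilpotent operators do not force the Cartan element of $\mathfrak{sl}_2$ to act by a single scalar: the torus eigenvalues are instead shifted by non-negative integer weights, and this is precisely the origin of the pseudo-algebraic factor $\kappa$ in the statement (and why $\kappa$ is forced to be trivial only in the non-archimedean case). Your derivation of complete reducibility likewise rests on the single-eigenvalue conclusion and so collapses over $\R$ or $\C$. This is not the normalization bookkeeping you flagged as the main obstacle; it is the substantive archimedean content that \cite{AG} supplies.
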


Now we are prepared  to prove Proposition \ref{reduce2}. Assume that $\gamma$ is good as in Proposition \ref{reduce2}. Denote by $L_\gamma$ the
space of all tempered generalized functions $f$ on $\v$ with the properties as in Proposition \ref{reduce2}.
Assume by contradiction  that $L_\gamma$ is non-zero. Then by Proposition \ref{eigest} and Proposition \ref{homcri}, there is an $\sl_2$-triple $\{\mathbf{h,e,f}\}$ as in \eqref{sl2t} such that
\begin{equation*}
\kappa_1\cdot \gamma_\rk^{-\tr(\widehat{\mathbf h})}\cdot | \\ \cdot \\ |^{\tr((2-\mathbf h)|_{\v^{\mathbf f}})} =\kappa_2^{-2}\cdot | \\ \cdot \\ |^{\dim\v}
\end{equation*}
for some
pseudo-algebraic characters $\kappa_1$ and $\kappa_2$ of $\rk^\times$.
Thus, by Lemma \ref{constant} and
Lemma \ref{sinequ}, there exists $r\in \{0,\pm 2,\cdots,\pm 2n\}$ and $m\in \{1,2, \cdots,  2n^2\}$ such that
\begin{equation}\label{etark}
\gamma_\rk^r=\kappa\cdot  | \\ \cdot \\ |^m
\end{equation}
for some    pseudo-algebraic character $\kappa$ of $\rk^\times$. Note that the equality \eqref{etark} does not hold for $r=0$. Thus $\gamma$ is not a good character and we arrive at a contradiction. Then the space $L_\gamma$ is  zero and we finish the proof of Proposition \ref{reduce2}.

\subsection{Proof of Theorem \ref{vanishl}}
Note that $\mathrm{sdim}(E)\geq 0$ since $E$ is assumed to be faithful as an $A$-module, and the equality holds only when $A$ is complex.
Thus Theorem \ref{vanishl} holds when $\mathrm{sdim}(E)=0$ by Proposition \ref{vanishcom}. Now assume that $\mathrm{sdim}(E)>0$ and Theorem \ref{vanishl} holds when $\mathrm{sdim}(E)$ is smaller. Theorem \ref{vanishl} is easily reduced to the case when $A$ is simple. Together with Proposition \ref{vanishcom}, we may (and do) assume that $A$ is simple and real. Without loss of generality we further assume that  $A=\rk\times \rk$.
Then it follows from Proposition \ref{reduction} that
every element of  $\con^{-\xi}_{\breve \chi}(\v(E))$ has support in $\CN_E$
(the space $\v(A)$ in Proposition \ref{reduction} is zero when $A$ is real).  Together with Lemma \ref{good1}, Lemma \ref{good2}, Lemma \ref{four0} and Proposition \ref{reduce2}, this implies that every element of $\con^{-\xi}_{\breve \chi}(\v(E))$ is zero.

\section{A proof of Theorem \ref{uniquef}}

Let the group $\breve \BH(E)$ act on $ \oG(E)$ by
\[
   (\delta,\breve g,\breve h). x:=(\breve g  x \breve h^{-1})^{\delta},\qquad  (\delta,\breve g,\breve h)\in \breve \BH(E),\
   x\in \oG(E).
\]
This section is devoted to a proof of the following theorem.

\begin{thmd}\label{vanisht}
Let $\breve \xi$ be a character of $\breve{\BH}(E)$ which is doubly relevant and doubly good. Then
the space of $\breve \xi$-equivariant generalized functions on $\oG(E)$ is
zero, in other words,
\begin{equation}\label{vanish}
  \con^{-\infty}_{\breve \xi}(\oG(E))=0.
\end{equation}
\end{thmd}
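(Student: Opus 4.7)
The plan is to reduce Theorem \ref{vanisht} to its Lie-algebra analogue Theorem \ref{vanishl} via a Harish-Chandra descent around normal elements of $\oG(E)$, exploiting the descent map $\jmath_x$ of \eqref{adx} together with the compatibility Proposition \ref{pad0}. As a preliminary step, I would invoke the tempered reduction of \cite[Theorem 4.2]{AG} to replace $\con^{-\infty}_{\breve \xi}(\oG(E))$ by $\con^{-\xi}_{\breve \xi}(\oG(E))$; in the non-archimedean case this step is vacuous.

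Next, for each normal element $x \in \oG(E)$ (i.e.\ $x\bar x = \bar x x$) such that $s := x\bar x^{-1}$ is semisimple as a $\rk$-linear operator, I would carry out a local descent. The $\breve{\BH}(E)$-stabilizer of $x$ equals the image of $\jmath_x \colon \breve \oH_s(E_s) \to \breve \BH(E)$, and one obtains a $\breve \BH(E)$-equivariant submersion from a suitable neighborhood of the identity section in $\breve \BH(E) \times (x\cdot \oG(E_s))$ onto a $\breve \BH(E)$-invariant open neighborhood of the orbit $\breve \BH(E) \cdot x$ in $\oG(E)$. By the submersion principle of \cite[Lemma 2.7]{JSZ}, restriction sends a $\breve \xi$-equivariant tempered generalized function on $\oG(E)$ to a $(\breve \xi \circ \jmath_x)$-equivariant tempered generalized function on a neighborhood of $x$ inside $x \cdot \oG(E_s)$; a Cayley transform then identifies this with a neighborhood of $0$ in $\v(E_s)$. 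By Proposition \ref{pad0} the pulled-back character is linearly good and linearly relevant, while Lemma \ref{cimpliesc} and Lemma \ref{sis} ensure that $E_s$ is complex and $A_s$ split, so Theorem \ref{vanishl} applied to $E_s$ forces vanishing on a neighborhood of $x$.

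The remaining task is to propagate local vanishing across the complement of this ``normal-semisimple'' stratum, namely across elements $x$ for which $x\bar x^{-1}$ fails to be semisimple, or where $x$ fails to be normal altogether. Following a group-theoretic Jordan decomposition, one writes such an $x$ as a commuting product of a normal semisimple piece (handled above by descent to a smaller graded Hermitian module) and a ``nilpotent'' correction lying in the null cone of its centralizer. An induction on $\mathrm{sdim}(E)$, together with the nilpotent-cone argument of Proposition \ref{reduce2} applied to the tangent picture at such special points, should then exhaust the remaining strata.

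The main obstacle I expect is exactly this propagation step. In the untwisted settings of \cite{JR,AG}, the MVW involution $x \mapsto x^{-1}$ preserves $f$ and permits the classical Gelfand-Kazhdan symmetrization; in the twisted setting this invariance is genuinely lost, as emphasized in the Introduction. Controlling the support of $f$ on the non-normal locus, and on the stratum where $x\bar x^{-1}$ has a unipotent part, will require the finer bookkeeping of Section 2.7 --- in particular, tracking how linear goodness and linear relevance restrict along $\jmath_x$ via Proposition \ref{pad0} and along the $\alpha$-decomposition \eqref{deca}--\eqref{dece}.
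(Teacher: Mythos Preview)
Your descent around normal elements with semisimple $s=x\bar x^{-1}$ is correct in spirit, and the use of $\jmath_x$ together with Proposition~\ref{pad0} is exactly what is needed at that step. The genuine gap is the propagation step you flag at the end: you propose to cover the complement of the normal-semisimple locus by a Jordan decomposition and a further induction, but you do not actually carry this out, and it is not clear that a direct stratification of $\oG(E)$ along these lines terminates cleanly without re-importing the very machinery you are trying to avoid.

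The paper sidesteps this obstacle entirely. Rather than invoking the tempered reduction \cite[Theorem 4.2]{AG} and then attempting a pointwise cover of $\oG(E)$, it applies the generalized Harish-Chandra descent \cite[Theorem 3.1.1]{AG}: to prove \eqref{vanish} it suffices to show $\con^{-\infty}_{\breve\xi}(\operatorname N_O^{\oG(E)})=0$ for every \emph{closed} $\breve\BH(E)$-orbit $O\subset\oG(E)$. Two structural facts then dissolve your obstacle. First, the symmetric pair $(\oG(E),\oH(E))$ is good in the sense of \cite{AG}, so every closed orbit contains a normal element $x$ (\cite[Lemma 7.4.7, Corollary 7.7.4]{AG}). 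Second, closedness of the orbit forces $s=x\bar x^{-1}$ to be semisimple (\cite[Proposition 7.2.1]{AG}). Thus one never has to confront non-normal $x$ or non-semisimple $s$ at all. By Frobenius reciprocity the vanishing on the normal bundle reduces to the fiber $\operatorname N_{O,x}^{\oG(E)}$, and a direct tangent-space computation (Lemma~\ref{decomge}) identifies this fiber with $\v(E_s)$ as a $\breve\oH_s(E_s)$-module --- no Cayley transform is needed. Theorem~\ref{vanishl} then finishes the argument via Proposition~\ref{pad0}.

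In short: replace your tempered-reduction-plus-local-cover scheme by the single invocation of \cite[Theorem 3.1.1]{AG}; the ``remaining task'' you identify then disappears, because only closed orbits matter and those are automatically represented by normal elements with semisimple~$s$.
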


If $A=\rk\times \rk$ is real and simple, then Theorem \ref{vanisht}  is just a reformulation of Theorem \ref{uniquef}.

By \cite[Theorem 3.1.1]{AG}, Theorem
\ref{vanisht} is implied by the following assertion:
\begin{equation}\label{van1}
  \con^{-\infty}_{\breve \xi}(\operatorname
N_{O}^{\oG(E)})=0\quad\textrm{for all closed
$\breve{\BH}(E)$-orbits $O\subset \oG(E)$.}
\end{equation}
Here
\[
  \operatorname
N_{O}^{\oG(E)}:=\bigsqcup_{x\in O}\operatorname
N_{O,x}^{\oG(E)}\qquad (\operatorname N_{O,x}^{\oG(E)}:=\operatorname
T_x (\oG(E))/\operatorname T_x O)
\]
is the normal bundle of $O$ in $\oG(E)$. It is naturally an
$\breve{\BH}(E)$-homogeneous vector bundle.

\begin{lemd}
For every closed $\breve{\BH}(E)$-orbit $O\subset \oG(E)$,
there is an element $x\in O$ which is normal in the sense that $x$
and $\bar{x}$ commute with each other.
\end{lemd}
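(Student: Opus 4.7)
The strategy is via the ``polar-type'' map $\pi \colon \oG(E) \to \oV(E)$ defined by $\pi(y) := y \bar y^{-1}$. A direct computation, using $\bar g = g$ for $g \in \oH(E)$, yields $\pi(g y h^{-1}) = g\,\pi(y)\,g^{-1}$; so $\pi$ is equivariant for left-conjugation on the target while the right factor of $\oH(E)\times\oH(E)$ acts trivially there. Since the fibers of $\pi$ are precisely the right $\oH(E)$-cosets, $\pi$ factors as a locally closed embedding $\oG(E)/\oH(E) \hookrightarrow \oV(E) \subset \oG(E)$.

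Applying this to our closed orbit $O$: the image $\pi(O)$ is a single $\oH(E)$-conjugation orbit in $\oV(E)$, and the closedness of $O$ together with its right-$\oH(E)$-invariance implies that $\pi(O)$ is a closed $\oH(E)$-conjugation orbit. By the standard GIT principle that closed conjugation orbits in a reductive group consist only of semisimple elements, we may choose $y \in O$ so that $s := y \bar y^{-1}$ is semisimple. A short computation shows that $x \in \oG(E)$ is normal if and only if $x \bar x \in \oH(E)$, equivalently iff $x$ commutes with $x \bar x^{-1}$. For $x = g y h^{-1}$ with $(g,h) \in \oH(E)\times \oH(E)$, since $x \bar x^{-1} = g s g^{-1}$, the normality of $x$ is equivalent to $y h^{-1} g \in Z := Z_{\oG(E)}(s)$. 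Setting $z := y h^{-1} g$ and using $\bar y = s^{-1} y$, this in turn is equivalent to the existence of $z \in Z$ with $z \bar z^{-1} = s$.

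The main obstacle is thus this Hilbert-$90$ type descent inside $Z$: producing $z \in Z$ solving $z \bar z^{-1} = s$. Since $s$ is semisimple and $\bar s = s^{-1}$, the bar-involution on $E$ permutes the $s$-eigenspaces via $\overline{E^{(\lambda)}} = E^{(\lambda^{-1})}$; the subgroup $Z$ decomposes along this pairing, and bar restricts to a natural involution on each block. On each paired block $E^{(\lambda)} \oplus E^{(\lambda^{-1})}$ with $\lambda \neq \pm 1$, the involution is of transpose-inverse type, for which Hilbert~$90$ is classical and $z$ can be written down explicitly. On the self-paired blocks $\lambda = \pm 1$, the complexity hypothesis on $E$ forces the degree~$0$ and degree~$1$ pieces of each block to have equal rank, allowing a direct construction: $z|_{E^{(1)}} = \mathrm{id}$, and $z|_{E^{(-1)}}$ is taken to be any grading-exchanging $A$-module isomorphism, which automatically satisfies $\bar z = -z$, hence $z \bar z^{-1} = -1 = s|_{E^{(-1)}}$. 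Assembling these pieces produces the required $z \in Z$, and hence a normal element of $O$.
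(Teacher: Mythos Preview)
Your approach is genuinely different from the paper's. The paper gives a two-line proof by citation: \cite[Corollary 7.7.4]{AG} shows that the symmetric pair $(\oG(E),\oH(E))$ is \emph{good} (every closed double $\oH(E)$-coset is stable under $y\mapsto\bar y^{-1}$), and then \cite[Lemma 7.4.7]{AG} produces a normal element in any closed double coset of a good pair. You instead attempt an explicit construction of a normal representative via the centralizer of $s=y\bar y^{-1}$.

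The strategy is sound, but the last step has a real gap. You assert that complexity of $E$ forces $(E^{(-1)})_0$ and $(E^{(-1)})_1$ to have equal rank; this is false for an arbitrary semisimple $s\in\oV(E)$. With $A=\rk\times\rk$ real and $e_1E\cong\rk^4$ graded as $\rk^2\oplus\rk^2$, the element $s=\diag(1,-1,-1,-1)$ lies in $\oV(E)$, yet its $(-1)$-eigenspace meets $E_0$ in dimension $1$ and $E_1$ in dimension $2$. What actually yields the balance is that $s$ lies in the image of your map $\pi$: writing $\epsilon$ for the grading involution on $E$, the relation $s=y\bar y^{-1}$ gives $s\epsilon=y\epsilon y^{-1}$, so $s\epsilon$ is conjugate to $\epsilon$ and its $(+1)$-eigenspace has dimension $\dim E_0$. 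Combining this with $\dim E_0=\dim E_1$ and the fact that each paired block $E^{(\lambda)}\oplus E^{(\lambda^{-1})}$ with $\lambda\neq\pm1$ contributes equally to both gradings, a short count gives $\dim(E^{(-1)})_0=\dim(E^{(-1)})_1$. (In the example above $s\epsilon=\diag(1,-1,1,1)$ is not conjugate to $\epsilon$, so that $s$ is not of the form $y\bar y^{-1}$.) Two smaller points: you should check that your $z$ can be taken in $\oG(E)$ rather than just $\GL_A(E)$, which in the split case is routine since $\oG(E)\cong\GL(e_1E)$; and the ``standard GIT principle'' you need is the symmetric-space statement that closed $\oH(E)$-orbits in $\oV(E)$ consist of semisimple elements (Richardson; see also \cite[Proposition 7.2.1]{AG}), not the statement about full conjugacy classes.
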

\begin{proof}
By \cite[Corollary 7.7.4.]{AG} and its proof, we know that the
symmetric pair $(\oG(E), \oH(E))$ is ``good" in the sense that
every closed double $\oH(E)$-coset in $\oG(E)$ is stable under the
map $y\mapsto \bar{y}^{-1}$.  Therefore the lemma follows from
\cite[Lemma 7.4.7]{AG}.
\end{proof}

Let $O\subset \oG(E)$ be a closed
$\breve{\BH}(E)$-orbit, and let $x\in O$ be a normal
element so that $x\bar x=\bar x x$. By Frobenious reciprocity (\cf \cite[Theorems 3.3 and
3.4]{AGS}), \eqref{van1} is equivalent to
\begin{equation}\label{van2p}
  \con^{-\infty}_{\breve \xi_x}(\operatorname
N_{O, x}^{\oG(E)})=0.
\end{equation}
Here $\breve \xi_x$ is the restriction of $\breve \xi$ to the stabilizer
$\breve{\BH}_x\subset \breve{\BH}(E)$ of $x$.

Put
\begin{equation*}\label{vE}
   s:= x \bar{x}^{-1}\in \oG(E).
\end{equation*}
Since the orbit $O$ is assumed to be closed, \cite[Proposition
7.2.1]{AG} implies that $s$ is semisimple. Recall the homomorphism
\[
 \jmath_x: \breve \oH_s(E_s)\rightarrow \breve \BH(E)
\]
from \eqref {adx}. This homomorphism is clearly injective and it is routine to check that  its image equals the stabilizer group $\breve{\BH}_x$. We identify $\breve{\BH}_x$ with $\breve \oH_s(E_s)$ via this homomorphism.

Identify the tangent space $\operatorname{T}_x (\oG(E))$ with
$\g(E)=\operatorname{T}_1(\oG(E))$ through the left translation.
Then the isotropic representation of $\breve{\BH}_x$ on
$\operatorname{T}_x (\oG(E))$ is identified with the following representation of $\breve{\oH}_s(E_s)$ on $\g(E)$:
\begin{equation*}\label{isoact}
  (g,\delta).y=\delta gyg^{-1},\qquad (g,\delta)\in
  \breve{\oH}_s(E_s),\,\, y\in \g(E).
\end{equation*}
This representation preserves the non-degenerate bilinear form $\la\, ,\, \ra_{\g(E)}$ (see \eqref{bform}).

\begin{lemd}\label{decomge}
One has a decomposition
\begin{equation*}\label{decomue}
 \g(E)=(\h(E)+\Ad_{x^{-1}}(\h(E)))\oplus \v(E_s)
\end{equation*}
of representations of  $\breve{\oH}_s(E_s)$.
\end{lemd}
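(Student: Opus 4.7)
The plan is to identify $\v(E_s)$ with the orthogonal complement of $\h(E)+\Ad_{x^{-1}}\h(E)$ inside $\g(E)$ under the trace form $\langle y,z\rangle_{\g(E)}=\tr(yz)$ from \eqref{bform}, and then to read off both the direct-sum decomposition and the $\breve{\oH}_s(E_s)$-equivariance from invariance properties of that form.

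First I would recall that the trace form is non-degenerate, $\bar{\cdot}$-invariant (so $\h(E)\perp\v(E)$), and $\Ad_x$-invariant for every $x\in\oG(E)$. Consequently,
\[
(\h(E)+\Ad_{x^{-1}}\h(E))^\perp \;=\; \h(E)^\perp\cap\Ad_{x^{-1}}(\h(E)^\perp)\;=\;\v(E)\cap\Ad_{x^{-1}}\v(E).
\]
For $y\in\v(E)$ a direct calculation gives $\overline{\Ad_x y}=-\Ad_{\bar x}y$, so $\Ad_x y\in\v(E)$ precisely when $y$ commutes with $x^{-1}\bar x=s^{-1}$. This identifies $\v(E)\cap\Ad_{x^{-1}}\v(E)$ with $\v(E_s)$.

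Next, decomposing $\g(E)=\g(E_s)\oplus\bigoplus_{\lambda\ne 1}\g(E)_\lambda$ into $\Ad_s$-eigenspaces, I would observe that the trace form pairs $\g(E)_\lambda$ with $\g(E)_{\lambda^{-1}}$, so the decomposition is orthogonal and the form restricts non-degenerately to $\g(E_s)=\h(E_s)\oplus\v(E_s)$. In particular $\v(E_s)\cap\v(E_s)^\perp=0$, which together with the identity $\dim(\h(E)+\Ad_{x^{-1}}\h(E))=\dim\g(E)-\dim\v(E_s)$ (coming from non-degeneracy of the ambient form) yields the claimed direct sum.

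For equivariance, the action $(g,\delta).y=\delta\, gyg^{-1}$ preserves the trace form because the sign squares to one and $\tr$ is conjugation-invariant. Preservation of $\v(E_s)$ is what remains. When $\delta=1$, the element $g$ lies in $\oH(E)\cap\oG(E_s)$, which obviously preserves both $\g(E_s)$ and the $\bar{\cdot}$-grading. When $\delta=-1$, the element $g$ is $\tau_{A_s}$-semilinear with $\bar g=sg$, so $gs=s^{-1}g$; a short check shows that $gyg^{-1}$ is then $A$-linear and centralizes $s$ for any $y\in\g(E_s)$, and combined with $\bar g=sg$ this gives $\overline{gyg^{-1}}=-s(gyg^{-1})s^{-1}=-gyg^{-1}$. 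So $\v(E_s)$ is preserved in both cases, and since the action preserves the trace form as well, it preserves the orthogonal complement $\v(E_s)^\perp=\h(E)+\Ad_{x^{-1}}\h(E)$. The main point requiring care is this $\tau$-semilinear calculation in the $\delta=-1$ case.
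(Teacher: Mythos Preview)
Your approach is essentially the same as the paper's: both identify $\v(E_s)$ as the orthogonal complement of $\h(E)+\Ad_{x^{-1}}\h(E)$ with respect to the trace form, using that $\h(E)\perp\v(E)$ and that the form is $\Ad$-invariant, and then conclude the direct-sum decomposition from non-degeneracy of the form on $\v(E_s)$. The paper's proof simply asserts this non-degeneracy and says nothing about equivariance; you supply both, via the $\Ad_s$-eigenspace decomposition for the former and a direct check of $\v(E_s)$-stability (including the $\tau$-semilinear $\delta=-1$ case, using $s^{\tau}=s^{-1}$) for the latter. These additions are correct and fill in details the paper leaves to the reader; one small point you might make explicit is that $gyg^{-1}$ remains skew for the Hermitian form (hence lies in $\g(E_s)$, not merely $\End_{A_s}(E)$), which follows from the defining relation $\langle gu,gv\rangle_{E_s}=\langle v,u\rangle_{E_s}$.
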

\begin{proof}
Note that $\g(E)=\h(E)\oplus \v(E)$ is an orthogonal decomposition with respect to the
bilinear form $\la , \ra_{\g(E)}$. Thus an element $y\in \g(E)$ is perpendicular to
$\h(E)+\Ad_{x^{-1}}(\h(E))$  if and only if both $y$ and $\Ad_x
y$ belong to $\v(E)$, that is,
\[
   \bar{y}=-y\quad\textrm{and}\quad \bar{x}\bar{y}\bar{x}^{-1}=-xyx^{-1}.
\]
This is equivalent to saying that $y\in \v(E_s)$. The lemma then
follows as the space $\v(E_s)$ is non-degenerate.

\end{proof}

Note that the tangent space
\[
  \operatorname{T}_x O=\h(E)+\Ad_{x^{-1}}(\h(E))\subset \g(E)=\operatorname{T}_x (\oG(E)).
\]
Hence by Lemma \ref{decomge}, the normal space
\[
   \operatorname{N}^{\oG(E)}_{O,x}=\frac{\g(E)}{\g(E)+\Ad_{x^{-1}}(\g(E))}\cong \v(E_s)
\]
as  a $\rk$-linear representation of
$\breve{\oH}_s(E_s)$. Thus, in view of Proposition \ref{pad0},
 \eqref{van2p} follows by Theorem \ref{vanishl}, and consequently,
Theorem \ref{vanisht} is proved.

\section{Proof of Theorem \ref{unique0}}\label{proof A}
This short section is devoted to a proof of Theorem \ref{unique0}.
The proof is similar to that in \cite{FJ, JR, AG}, but the consideration of meromorphic continuation is avoided due to the proof of Theorem  \ref{uniquel}.  Let $\pi$ be an irreducible admissible smooth representation of $\GL_{2n}(\rk)$ as in Theorem \ref{unique0}, and let
 $\lambda\in \Hom_{\oS_n(\rk)}(\pi, \psi_{\oS_n})$ (see \eqref{sshalika}). For every $v\in \pi$,  let $\phi_{\lambda, v}$ denote the following function on $\GL_n(\rk)$: \[\phi_{\lambda,v}:\GL_n(\rk)\rightarrow \C,\qquad g\mapsto  \lambda\left(\left[ \begin{array}{cc}
              g&0\\ 0&1
              \end{array}
              \right].
                 v\right).\]
As in \cite{FJ}, consider the following integral:
\[
  \oZ_\lambda(v, s):=\int_{\GL_n(\rk)} \phi_{\lambda, v}(g)\cdot\abs{\det(g)}^{s-\frac{1}{2}}\,\od g, \quad s\in \C.
\]
Here and throughout this subsection, all the measures occurring are Haar measures.

\begin{lem}\label{stablem}
The set $\{\phi_{\lambda, v}\mid v\in \pi\}$ is stable under the multiplications by Schwartz functions on $\GL_n(\rk)$.
\end{lem}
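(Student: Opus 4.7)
The plan is to realize multiplication by Schwartz functions on $\GL_n(\rk)$ through an integration against the upper unipotent radical $U:=\{\begin{bmatrix}1 & b\\ 0 & 1\end{bmatrix}:b\in\oM_n(\rk)\}$ of the Shalika subgroup, exploiting that $\lambda$ transforms under $U$ by the additive character $b\mapsto\psi_\rk(\tr b)$. Given $v\in\pi$ and $h\in\CS(\oM_n(\rk))$, I form the ``smeared'' vector
\[
v_h\ :=\ \int_{\oM_n(\rk)} h(b)\,\pi\!\begin{bmatrix}1 & b\\ 0 & 1\end{bmatrix}\!v\,db\ \in\ \pi,
\]
which converges as a finite sum in the non-archimedean case and as an absolutely convergent Bochner integral in the Fr\'echet space $\pi$ in the archimedean case (using that $h$ is Schwartz and $v$ is a smooth vector).

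Using the commutation
\[
\begin{bmatrix}g & 0\\ 0 & 1\end{bmatrix}\!\begin{bmatrix}1 & b\\ 0 & 1\end{bmatrix}=\begin{bmatrix}1 & gb\\ 0 & 1\end{bmatrix}\!\begin{bmatrix}g & 0\\ 0 & 1\end{bmatrix}
\]
and the defining property \eqref{psik} of $\psi_{\oS_n}$, a direct unwinding then gives the key identity
\[
\phi_{\lambda,v_h}(g)\ =\ \hat h(g)\cdot\phi_{\lambda,v}(g),\qquad \hat h(g):=\int_{\oM_n(\rk)}\! h(b)\,\psi_\rk(\tr(gb))\,db.
\]
Thus passing from $v$ to $v_h$ realizes multiplication of $\phi_{\lambda,v}$ by the restriction to $\GL_n(\rk)$ of the Fourier transform of $h$ on the additive group $\oM_n(\rk)$. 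Since $h\mapsto\hat h$ is a bijection of $\CS(\oM_n(\rk))$ onto itself, I obtain multiplication by every function in the subspace $\CS(\oM_n(\rk))|_{\GL_n(\rk)}$ of smooth functions on $\GL_n(\rk)$.

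In the non-archimedean case this already suffices: any $f\in\CS(\GL_n(\rk))=C_c^\infty(\GL_n(\rk))$ extends by zero to a Schwartz function on $\oM_n(\rk)$, so taking $h=\mathcal F^{-1}(\tilde f)$ yields $\hat h|_{\GL_n(\rk)}=f$ and hence $f\cdot\phi_{\lambda,v}=\phi_{\lambda,v_h}$. In the archimedean case the space $\CS(\oM_n(\rk))|_{\GL_n(\rk)}$ is strictly smaller than the Nash--Schwartz space $\CS(\GL_n(\rk))$, the latter also requiring rapid decay toward the divisor $\{\det=0\}$. To bridge this gap I plan to combine the above multiplicative construction with the translation actions on $\phi_{\lambda,v}$ obtained by replacing $v$ with $\pi\!\begin{bmatrix}g_0 & 0\\ 0 & 1\end{bmatrix}\!v$ (right translation) or $\pi\!\begin{bmatrix}1 & 0\\ 0 & g_0\end{bmatrix}\!v$ (twisted left translation), and to invoke a Dixmier--Malliavin-type factorization for the resulting $\GL_n(\rk)\times\GL_n(\rk)$-action to express any $f\in\CS(\GL_n(\rk))$ as an integrated translate of a multiplier $\hat h|_{\GL_n(\rk)}$; integrating the corresponding operations against $v$ produces the single vector $v'\in\pi$ required. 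The main obstacle is precisely this archimedean factorization step: verifying that integrated translates of the restricted Fourier multipliers genuinely exhaust all of $\CS(\GL_n(\rk))$ at the level of realization by a single vector, rather than merely at the level of finite linear combinations.
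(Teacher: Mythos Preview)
Your core computation is exactly the paper's argument: form the smeared vector $v_h$ over the unipotent radical, use the commutation relation, and read off $\phi_{\lambda,v_h}=\hat h\cdot\phi_{\lambda,v}$. The paper then finishes in one line by noting that for any $\phi\in\CS(\GL_n(\rk))$ one can choose $\hat\phi\in\CS(\oM_n(\rk))$ with $\phi(g)=\int_{\oM_n(\rk)}\psi_\rk(\tr(gh))\hat\phi(h)\,dh$ for $g\in\GL_n(\rk)$.

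Your archimedean ``obstacle'' is based on a reversed inclusion. You assert that $\CS(\oM_n(\rk))|_{\GL_n(\rk)}$ is strictly \emph{smaller} than $\CS(\GL_n(\rk))$; in fact it is strictly \emph{larger}. A Nash--Schwartz function on the open submanifold $\GL_n(\rk)\subset\oM_n(\rk)$ vanishes to infinite order along $\{\det=0\}$, hence its extension by zero lies in $\CS(\oM_n(\rk))$; thus $\CS(\GL_n(\rk))\hookrightarrow\CS(\oM_n(\rk))$ (while a generic Schwartz function on $\oM_n(\rk)$ need not vanish on $\{\det=0\}$, so the inclusion is proper in the other direction). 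Consequently, given $\phi\in\CS(\GL_n(\rk))$, extend it by zero to $\tilde\phi\in\CS(\oM_n(\rk))$ and take $h:=\mathcal F^{-1}\tilde\phi\in\CS(\oM_n(\rk))$; then $\hat h=\tilde\phi$ and $\hat h|_{\GL_n(\rk)}=\phi$, giving $\phi\cdot\phi_{\lambda,v}=\phi_{\lambda,v_h}$ directly. No Dixmier--Malliavin factorization or translation trick is needed; your proposed detour can be deleted and the proof is then identical to the paper's.
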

\begin{proof}
Let $\phi$ be a Schwartz function on $\GL_n(\rk)$. Then there is a Schwartz funcntion $\hat \phi$ on $\oM_n(\rk)$ such that
 \[
 \phi(g)=  \int_{\oM_n(\rk)} \psi_\rk(\tr(gh)) \hat \phi(h) \od \!h, \quad \textrm{for all } g\in \GL_n(\rk),
 \]
 where $\psi_\rk$ is a non-trivial unitary character of $\rk$ as in \eqref{psik}.
For each $v\in \pi$, put
 \[
   v':=\int_{\oM_n(\rk)}  \hat \phi(h) \left[ \begin{array}{cc}
              1&h\\ 0&1
              \end{array}
              \right]
.v \od \!h\in \pi.
 \]
 Then it is easy to check that $\phi\cdot  \phi_{\lambda, v}=\phi_{\lambda, v'}$.
\end{proof}

Lemma \ref{stablem} has  the following obvious consequence.
\begin{lem}\label{stablem2}
Assume that $\lambda$ is non-zero.  Then for every $s\in \C$, there is a vector $v\in \pi$ such that the integral  $\oZ_\lambda(v, s)$ is absolutely convergent and non-zero.
\end{lem}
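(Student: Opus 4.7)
The plan is to combine the non-vanishing of $\lambda$ with Lemma~\ref{stablem} and a local-continuity argument around the identity of $\GL_n(\rk)$. First I would pick a vector $v_0 \in \pi$ with $\alpha := \lambda(v_0) = \phi_{\lambda,v_0}(1) \neq 0$; such $v_0$ exists precisely because $\lambda$ is non-zero. The function $\phi_{\lambda,v_0}$ is continuous on $\GL_n(\rk)$, since $\pi$ is a smooth (Casselman--Wallach in the archimedean case) representation and $\lambda$ is continuous.

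For the given $s \in \C$, I would next choose a sufficiently small open neighborhood $U$ of $1$ in $\GL_n(\rk)$ so that both $|\phi_{\lambda,v_0}(g) - \alpha| < |\alpha|/3$ and $\bigl| |\det g|^{s-1/2} - 1 \bigr| < 1/3$ hold for all $g \in U$, and then select a non-negative function $\phi$ on $\GL_n(\rk)$ supported in $U$, locally constant in the non-archimedean case and smooth with compact support in the archimedean case, with $\int_{\GL_n(\rk)} \phi(g)\,\od g > 0$. Such a $\phi$ can be fed into Lemma~\ref{stablem}: extending $\phi$ by zero to $\oM_n(\rk)$ yields a Schwartz function on $\oM_n(\rk)$, and its inverse Fourier transform $\hat\phi$ is likewise Schwartz on $\oM_n(\rk)$, satisfying $\phi(g) = \int_{\oM_n(\rk)} \psi_\rk(\tr(gh))\, \hat\phi(h)\, \od h$ for every $g \in \GL_n(\rk)$. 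In the non-archimedean case one may shortcut this step by taking $\phi$ to be the characteristic function of a sufficiently small open compact neighborhood of the identity.

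Lemma~\ref{stablem} then produces $v \in \pi$ with $\phi_{\lambda,v} = \phi \cdot \phi_{\lambda,v_0}$, so the integrand of $\oZ_\lambda(v,s)$ is supported in the compact set $\operatorname{supp}(\phi) \subset U$ and is bounded there, which gives absolute convergence of $\oZ_\lambda(v,s)$ at once. For non-vanishing, a triangle-inequality estimate shows that $\phi_{\lambda,v_0}(g)\,|\det g|^{s-1/2}$ differs from $\alpha$ by at most $\tfrac{|\alpha|}{3}\cdot\tfrac{4}{3} + |\alpha|\cdot\tfrac{1}{3} = \tfrac{7|\alpha|}{9}$ on $U$. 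Consequently $\bigl| \oZ_\lambda(v,s) - \alpha \int_{\GL_n(\rk)} \phi(g)\, \od g \bigr| \le \tfrac{7|\alpha|}{9}\int_{\GL_n(\rk)} \phi(g)\, \od g < |\alpha| \int_{\GL_n(\rk)} \phi(g)\, \od g$, so $\oZ_\lambda(v,s) \neq 0$.

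The argument is essentially routine, and there is no serious obstacle; the only point that deserves care is confirming that an arbitrary compactly supported bump function on $\GL_n(\rk)$ really does fall within the class handled by Lemma~\ref{stablem}, which is precisely what the Fourier-theoretic observation in the second paragraph accomplishes. Note that the dependence of $U$ (and hence of $\phi$ and $v$) on $s$ is harmless, since the statement requires only the existence of some vector $v$ for each individual $s$, and no uniformity in $s$ is claimed.
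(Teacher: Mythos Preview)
Your argument is correct and is exactly the standard reasoning the paper has in mind: the paper gives no proof at all beyond declaring the lemma an ``obvious consequence'' of Lemma~\ref{stablem}, and your bump-function construction around the identity is precisely how one makes that obvious step explicit. The only minor redundancy is your second paragraph verifying that a compactly supported smooth bump $\phi$ is admissible for Lemma~\ref{stablem} via the Fourier transform on $\oM_n(\rk)$; this is already built into the statement of Lemma~\ref{stablem} (which allows arbitrary Schwartz functions on $\GL_n(\rk)$), so you may simply invoke the lemma directly.
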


Let $\mathrm K_{2n}$ denote a fixed maximal compact subgroup of $\GL_{2n}(\rk)$.
For each  $h=[h_{i,j}]_{1\leq i,j\leq n}\in \GL_n(\rk)$, put
\[
||h||:=1+\sum_{1\leq i,j\leq n} \abs{h_{i,j}}+\abs{\det{h}}^{-1}.
\]
\begin{lem}\label{bounds}
Let $v_0\in \pi$. Then there exists a positive integer $N$ such that
\[
 \left|\lambda\left(
              \left[ \begin{array}{cc}
              h&0\\ 0&1
              \end{array}
              \right].( k  .v_0)\right)\right|\leq ||h||^N
\]
for all $h\in \GL_n(\rk)$ and all $k\in \mathrm K_{2n}$.
\end{lem}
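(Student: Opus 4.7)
The plan is to split into the archimedean and non-archimedean cases and in each reduce the bound to a standard growth estimate for translates of vectors in $\pi$ by $\mathrm{diag}(h,1)$, with the compactness of $\mathrm K_{2n}$ used to absorb the factor $k.v_0$.

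In the archimedean case, I would invoke the moderate growth property of Casselman--Wallach representations: for every continuous linear functional $\lambda$ on $\pi$ there exist a positive integer $N_0$, a continuous seminorm $p$ on $\pi$, and a constant $C>0$ such that
\begin{equation*}
 |\lambda(g.v)| \le C\,\|g\|^{N_0}\,p(v), \qquad g\in\GL_{2n}(\rk),\ v\in\pi,
\end{equation*}
where $\|\cdot\|$ is a fixed submultiplicative norm on $\GL_{2n}(\rk)$ that dominates both matrix entries and inverse determinant. Setting $g=\mathrm{diag}(h,1)k$ and $v=v_0$, the factor $\|k\|$ is uniformly bounded on $\mathrm K_{2n}$, so $\|g\|$ is comparable to $\|\mathrm{diag}(h,1)\|$, which in turn is bounded by a fixed power of $\|h\|$. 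Absorbing the constants and $p(v_0)$ into the exponent produces the bound $\|h\|^N$.

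In the non-archimedean case, $v_0$ is fixed by some open compact subgroup $U\subset \GL_{2n}(\rk)$, so compactness of $\mathrm K_{2n}$ forces $\{k.v_0:k\in\mathrm K_{2n}\}$ to be a finite union of $U$-orbits, hence a finite set $\{v_1,\dots,v_m\}$. It therefore suffices to bound $|\lambda(\mathrm{diag}(h,1).v)|$ polynomially in $\|h\|$ for each fixed $v\in\pi$. Using the Cartan decomposition $\GL_n(\rk)=\mathrm K_n A \mathrm K_n$ with $A$ the diagonal torus, and enlarging the finite family of vectors to absorb the two compact Iwasawa factors (which is possible by the same open-stabilizer argument), this reduces to bounding $|\lambda(\mathrm{diag}(a,1).v)|$ for $a\in A$. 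For this I would appeal to the asymptotic expansion along $A$ provided by Casselman's canonical pairing and the finiteness of the Jacquet module of the finite-length admissible representation $\pi$: the function $a\mapsto\lambda(\mathrm{diag}(a,1).v)$ decomposes as a finite sum $\sum_i \chi_i(a)P_i(\mathrm{val}(a))$, where the $\chi_i$ are characters of $A$ whose absolute values grow polynomially in $\|a\|$ (since $\|a\|$ dominates both the diagonal entries and $|\det a|^{-1}$) and the $P_i$ are polynomials in the valuations of the diagonal entries; a uniform polynomial bound in $\|a\|$ results.

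The main technical point is the non-archimedean torus estimate, where one must justify that a Shalika-type functional (not a classical smooth matrix coefficient) admits the required asymptotic expansion; this follows from the general Casselman-type expansion for linear functionals on a finite-length smooth admissible representation that are compatible with parabolic restriction, and the Shalika equivariance of $\lambda$ is not needed at this stage. The archimedean case, by contrast, is essentially immediate from the moderate growth axiom built into the Casselman--Wallach framework.
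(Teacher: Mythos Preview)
Your archimedean argument is the same as the paper's: the moderate growth axiom for Casselman--Wallach representations gives the bound immediately, and compactness of $\mathrm K_{2n}$ absorbs the $k$-factor. The first step of your non-archimedean argument --- reducing to finitely many vectors $v$ via smoothness of $v_0$ and compactness of $\mathrm K_{2n}$ --- also matches the paper.

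The gap is in your non-archimedean torus estimate. Casselman's asymptotic expansion is a statement about the pairing of $\pi$ with its \emph{smooth} dual $\pi^\vee$; it does not extend to an arbitrary element of the full algebraic dual of $\pi$. Your assertion that ``the Shalika equivariance of $\lambda$ is not needed at this stage'' cannot be right: for a fixed $v$ the translates $\mathrm{diag}(a,1).v$ with $a$ running over the torus typically span an infinite-dimensional subspace of $\pi$ (they lie in different $K$-isotypic pieces as $a$ varies), so one can choose a linear functional on $\pi$ taking essentially arbitrary values on them, in particular values growing faster than any polynomial in $\|a\|$. The phrase ``compatible with parabolic restriction'' is not a standard hypothesis, and you have not said what it means or why the Shalika functional satisfies it.

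The paper does not attempt a general Jacquet-module argument here. It cites \cite[Lemma~6.1]{JR}, whose proof \emph{uses} the Shalika equivariance --- specifically the equivariance of $\lambda$ under the unipotent radical with the nondegenerate character $b\mapsto\psi_\rk(\tr b)$ --- to control $\phi_{\lambda,v}$ directly, and observes that this proof goes through unchanged when $\psi_{\oS_n}$ is nontrivial on $\oD_n(\rk)$. If you want a self-contained non-archimedean argument, you should exploit that unipotent equivariance (as Jacquet--Rallis do) rather than appeal to a Casselman-type expansion for functionals outside the smooth dual.
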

\begin{proof}
When $\rk$ is archimedean, the lemma follows from the moderate growth condition on the Cassleman-Wallach representation $\pi$. When $\rk$ is non-archimedean, it suffices to show that
 there exists $N>0$ such that
\[
 \left|\lambda\left(
              \left[ \begin{array}{cc}
              h&0\\ 0&1
              \end{array}
              \right].v_0\right)\right|\leq ||h||^N
\]
for all $h\in \GL_n(\rk)$. A stronger form  of this result is proved in \cite[Lemma 6.1]{JR}, with
 the assumption that  $\psi_{\oS_n}$ has trivial restriction to $\oD_n(\rk)$ (see \eqref{dn}). But their proof works without this assumption.
\end{proof}

\begin{lem}\label{converg0}
When the real part of $s$ is sufficiently large, the integral $\oZ_\lambda(v,s)$ is absolutely convergent for all $v\in \pi$, and the resulting linear functional  $v\mapsto \oZ_\lambda(v,s)$  on $\pi$ is continuous in the archimedean case.
\end{lem}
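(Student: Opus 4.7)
My plan is to combine the pointwise estimate of Lemma \ref{bounds} with a standard integration formula on $\GL_n(\rk)$ and a Mellin-type convergence analysis.

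First, I would apply the Cartan decomposition $\GL_n(\rk) = \mathrm K_n \cdot A_n^+ \cdot \mathrm K_n$, where $\mathrm K_n$ is a maximal compact subgroup chosen so that its block-diagonal embedding $k \mapsto \mathrm{diag}(k, 1)$ lies in $\mathrm K_{2n}$, and $A_n^+$ consists of diagonal matrices $a = \mathrm{diag}(t_1, \ldots, t_n)$ with $|t_1| \ge \cdots \ge |t_n|$. For $g = k_1 a k_2$ one has $\mathrm{diag}(g, 1).v = \mathrm{diag}(k_1 a, 1).(\mathrm{diag}(k_2, 1).v)$, so Lemma \ref{bounds}, applied with the fixed vector $v$ and the varying compact parameter $\mathrm{diag}(k_2, 1) \in \mathrm K_{2n}$, gives a bound $|\phi_{\lambda, v}(g)| \le C_v \|k_1 a\|^N \le C'_v \|a\|^N$, with $N$ and $C_v, C'_v$ depending only on $v$.

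Next, I would insert this bound into the integral using the Cartan integration formula $\od g = J(a)\, \od k_1\, \od a\, \od k_2$, where $J$ is the standard Jacobian factor (a product of $|t_i - t_j|$-type or $|t_i/t_j|$-type expressions, depending on whether $\rk$ is archimedean). After the $\mathrm K_n$-integrations, which contribute bounded factors, absolute convergence of $\oZ_\lambda(v, s)$ reduces to the convergence of
$$\int_{A_n^+} \|a\|^N \cdot |\det a|^{\sigma - 1/2} \cdot J(a) \, \od a, \qquad \sigma := \Re(s).$$
Using $\|a\| \asymp (1 + |t_1|) \cdot (1 + |t_n|^{-1})$ and tracking the total exponent of each $|t_i|$ arising from $|\det a|^{\sigma - 1/2}$, $\|a\|^N$, $J(a)$, and the measure, one verifies directly that the torus integral converges once $\sigma$ is sufficiently large, yielding the absolute convergence claim.

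For the continuity statement in the archimedean case, I would strengthen Lemma \ref{bounds} so that the constant $C_v$ is replaced by $\nu(v)$ for some continuous seminorm $\nu$ on the Casselman-Wallach representation $\pi$ (depending on the exponent $N$). This strengthened form is implicit in the proof of Lemma \ref{bounds} since it rests on the moderate growth property of $\pi$ and the continuity of $\lambda$. Integrating this uniform bound and invoking the convergence of the torus integral above then gives both the estimate $|\oZ_\lambda(v, s)| \lesssim \nu(v)$ and, via dominated convergence in $v$, the continuity of $v \mapsto \oZ_\lambda(v, s)$. The main obstacle will be this continuity step: one must verify that the constant in Lemma \ref{bounds} propagates to a continuous seminorm, which is routine but requires the precise Casselman-Wallach input rather than just a pointwise bound; the Mellin convergence step itself is a mechanical exercise in exponent bookkeeping.
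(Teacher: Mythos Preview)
Your approach has a genuine gap: the bound $|\phi_{\lambda,v}(g)|\le C'_v\,\|a\|^N$ coming from Lemma~\ref{bounds} is a \emph{growth} estimate, not a decay estimate, and it is not strong enough to make the torus integral converge for large $\sigma=\Re(s)$. Already for $n=1$ (say $\rk=\R$) your integrand is dominated only by $(1+|t|+|t|^{-1})^N\,|t|^{\sigma-1/2}\,\od^\times t$, which behaves like $|t|^{N+\sigma-1/2}\,\od^\times t$ as $|t|\to\infty$ and therefore diverges precisely when $\sigma$ is large. The same phenomenon persists for general $n$: making $\sigma$ large helps near the ``small'' end of $A_n^+$ but ruins the ``large'' end. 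Lemma~\ref{bounds} alone gives no control there, because $\phi_{\lambda,v}$ need not decay at infinity (it is not a matrix coefficient of a tempered representation, nor a Schwartz function).

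The paper's argument supplies the missing decay from a different source. One writes $v=\phi.v_0$ with $\phi\in\CS(\GL_{2n}(\rk))$, uses the Iwasawa-type decomposition on $\GL_{2n}(\rk)$, and Fourier-transforms $\phi$ in the unipotent variable to obtain a function $\hat\phi\in\CS(\oM_n(\rk)\times\GL_n(\rk)\times\GL_n(\rk)\times\mathrm K_{2n})$. After this, $\phi_{\lambda,v}(h)$ becomes an integral of $\hat\phi(h,a,b,k)$ against the quantity bounded in Lemma~\ref{bounds}; the Schwartz decay of $\hat\phi$ in $h$ then makes the $h$-integral of $|\det h|^{\sigma-1/2}$ converge for $\sigma\gg 0$, exactly as in the Godement--Jacquet zeta integrals. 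Continuity in $v$ (archimedean case) also falls out of this description, since $\phi\mapsto\hat\phi$ is continuous and the map $\CS(\GL_{2n}(\rk))\to\pi$ is open. If you want to repair your argument, you must build in some mechanism---Schwartz smoothing, or an independent decay estimate for $\phi_{\lambda,v}$ at infinity---that Lemma~\ref{bounds} by itself does not provide.
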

\begin{proof}

Fix a non-zero element $v_0\in \pi$. Then
\[
 \CS(\GL_{2n}(\rk))\rightarrow \pi, \quad   \phi\mapsto \phi.v_0:=\int_{\GL_{2n}(\rk)} \phi(g)\, g.v_0 \od\! g
\]
is a surjective linear map, and it is open and continuous in the archimedean case. Here  ``$\CS$" stands for the space of Schwartz functions as usual. Define a linear map (which is continuous in the archimedean case)
\[
   \CS(\GL_{2n}(\rk))\rightarrow  \CS(\oM_n(\rk)\times \GL_{n}(\rk)\times \GL_n(\rk)\times \mathrm K_{2n}), \quad \phi\mapsto \hat \phi
\]
by
\[
  \hat \phi(y,a,b,k):=\int_{\oM_n(\rk)} \psi_\rk(\tr (yx))   \phi\left(\left[
  \begin{array}{cc}
              1&x\\ 0&1
              \end{array}
              \right]\cdot
             \left[ \begin{array}{cc}
              a&0\\ 0&b
              \end{array}
              \right] \cdot k
              \right) \od\! x.
\]
Then
\begin{eqnarray*}
  && \lambda\left(  \left[ \begin{array}{cc}
              h&0\\ 0&1
              \end{array}
              \right] .(\phi.v_0)\right)\\
  &=&\int_{\GL_n(\rk)} \int_{\GL_n(\rk)} \int_{\mathrm K_{2n}} \int_{\oM_n(\rk)}
    \abs{\det a}^{-n} \cdot \abs{\det b}^n\cdot
  \phi\left(\left[
  \begin{array}{cc}
              1&x\\ 0&1
              \end{array}
              \right]\cdot
             \left[ \begin{array}{cc}
              a&0\\ 0&b
              \end{array}
              \right] \cdot k
              \right)\\
              &&\cdot  \lambda\left( \left( \left[ \begin{array}{cc}
              h&0\\ 0&1
              \end{array}
              \right] \cdot
            \left[ \begin{array}{cc}
              1&x\\ 0&1
              \end{array}
              \right]\cdot
             \left[ \begin{array}{cc}
              a&0\\ 0&b
              \end{array}
              \right] \cdot k\right) .v_0\right) \od\! x \od\! k \od\! a \od\! b \\
             & =&\int_{\GL_n(\rk)} \int_{\GL_n(\rk)} \int_{\mathrm K_{2n}}
   \hat \phi\left(h,
             a,b, k
              \right)\\
              &&\cdot
              \psi_{\oS_n}\left( \left[ \begin{array}{cc}
              b&0\\ 0&b
              \end{array}
              \right] \right)\cdot
                \lambda\left( \left( \left[ \begin{array}{cc}
              b^{-1}ha&0\\ 0&1
              \end{array}
              \right] \cdot k\right) .v_0\right) \od\! k \od\! a \od\! b.
\end{eqnarray*}
By Lemma \ref{bounds}, the absolute values of this integral is bounded by
\[
  \int_{\GL_n(\rk)} \int_{\GL_n(\rk)} \int_{\mathrm K_{2n}}
 \abs{ \hat \phi\left(h, a,b, k
              \right)}\\
             \cdot
              ||h||^N\cdot ||a||^N\cdot ||b||^N \od\! k \od\! a \od\! b,
\]
where $N$ is a positive integer which is independent of $\phi$. Now the lemma follows easily, as in the proof of the convergence of Godement-Jacquet zeta integrals.
 \end{proof}

 Now we are ready to prove  Theorem \ref{unique0}. Let  $\CL$ be a finite dimensional subspace of $\Hom_{\oS_n(\rk)}(\pi, \psi_{\oS_n})$. By Lemma \ref{converg0} and Lemma \ref{stablem2},  for all $s\in \C$ whose real part is sufficiently large, we have a well defined injective linear map
 \begin{eqnarray}\label{heh}
  \CL\rightarrow \Hom_{\GL_n(\rk)\times \GL_n(\rk)}(\pi, \chi_s),\quad
\lambda\mapsto \oZ_\lambda(\cdot,s),
\end{eqnarray}
where
  $\chi_s$ is the character of $\GL_n(\rk)\times \GL_n(\rk)$ defined by
  \[\chi_s\left(\left[\begin{array}{cc}
              a&0\\ 0&b
              \end{array}
              \right]\right)=\psi_{\oS_n}\left(\left[\begin{array}{cc}
              b&0\\ 0&b
              \end{array}
              \right]\right)\cdot \abs{\det(ba^{-1})}^{s-\frac{1}{2}},\quad a,b\in \GL_n(\rk).\]
Then Theorem \ref{uniquel} implies that the space $\CL$ is at most one dimensional. This proves Theorem \ref{unique0}.

\end{document}